\newtheorem{theo}{Theorem}[section]
\newtheorem{prop}[theo]{Proposition}
\newtheorem{lemm}[theo]{Lemma}
\newtheorem{coro}[theo]{Corollary}
\newtheorem{rema}[theo]{Remark}
\newtheorem{Defi}[theo]{Definition}
\newtheorem{conj}[theo]{Conjecture}
\newtheorem{question}[theo]{Question}
\newtheorem{example}[theo]{Example}
\newtheorem{sublemm}[theo]{Sublemma}
\title{Torsion points of sections of Lagrangian torus fibrations and the Chow ring of hyper-K\"ahler manifolds}
\author{Claire Voisin\footnote{Coll\`ege de France and ETH-ITS}}
\date{}
\newfont{\gothic}{eufb10}
\begin{document}
\maketitle
\setcounter{section}{-1}

\begin{abstract} Let $\phi:X\rightarrow B$ be  a Lagrangian
fibration  on a projective irreducible hyper-K\"ahler manifold. Let $M\in {\rm Pic}\,X$ be a line bundle whose restriction to the general fiber $X_b$ of
 $\phi$ is topologically trivial. We prove that if the fibration is isotrivial or has maximal variation  and $X$ is of dimension $\leq 8$, the set of points $b$ such that the restriction
 $M_{\mid X_b}$ is torsion is dense in $B$. We give an application to the Chow ring of $X$, providing further evidence for Beauville's weak splitting conjecture.
\end{abstract}
\section{Introduction}
\subsection{The general problem and main result}
Let $X$ be an  algebraic variety over $\mathbb{C}$,
 $X\rightarrow B$  a smooth projective morphism and let $M\in {\rm Pic}\,X$ be a line bundle
which is topologically trivial on the fibers of $\phi$. The line bundle $M$ then determines an algebraic section $\nu_M$
of the torus
fibration ${\rm Pic}^0(X^0/B^0)\rightarrow B^0$ (which we will call the normal function
associated to $M$),  where
$X^0\rightarrow B^0$ is the restriction of $\phi $ over the open set $B^0\subset B$ of  regular values of $\phi$. The group of such sections is called the Mordell-Weil group of the fibration.
The problem we investigate
in this paper is:
\begin{question}
\label{question}
When do there exist points $b\in B^0$ where $\nu_M(b)$ is a torsion point? Are these points dense
(for the usual or Zariski topology) in $B$?
\end{question}
Note that this question can be restated in terms of variations of mixed Hodge structures $E_t$
obtained as an extension
$$0\rightarrow H_t\rightarrow E_t\rightarrow \mathbb{Z}\rightarrow 0$$
of $\mathbb{Z}$ by a pure Hodge structure $H_t$ of weight $1$ (see for example \cite{carlson}, \cite{brosnan},  \cite{charles}).
Torsion points then correspond to those points $t$ for which  $E_t$ has a nonzero rational element which belongs to $F^1E_{t,\mathbb{C}}$.
In this  setting, similar existence or density problems have been investigated
in the study of the density of the Noether-Lefschetz locus (see  \cite{ciliharrismiranda}, \cite{voisindensity}), where variations of weight $2$ Hodge structures are considered and special points $t$ are those for which the Hodge structure $E_t$ acquires a Hodge class. More generally, the set of special points is the object of a vast literature related to Andr\'{e}-Oort conjecture  and
``unlikely intersection'' theory \cite{zannierbook}, but more focused on bounding special points than on their existence. It turns out that both problems are related to transversality questions so that
 there is in fact  a certain overlap in the methods (see for example \cite{zannier}).

Question \ref{question} is natural only when the codimension of the locus of torsion points in
${\rm Pic}^0(X/B)$, that is $h^{1,0}(X_b)={\rm dim}\,{\rm Pic}^0(X/B)-{\rm dim}\,B$, is not greater
than $n={\rm dim}\,B$, so that the section $\nu_M$ is expected to meet this locus.
We will consider in this paper the case where $h^{1,0}(X_b)={\rm dim}\,B$, and more precisely
we will work in the following setting:
 $X$ will be a projective hyper-K\"ahler manifold of dimension $2n$ and  $\phi:X\rightarrow B$ will be a Lagrangian fibration.
According to Matsushita \cite{matsumain}, any morphism $\phi: X\rightarrow B$ with $0<{\rm dim}\,B<{\rm dim}\,X$
and connected fibers provides such  a Lagrangian fibration. The smooth fibers $X_b$ are then abelian varieties of dimension $n$, which are in  fact canonically polarized, by a  result of Matsushita
 (see \cite[Lemma 2.2]{matsurank}, see also Proposition \ref{promatsu}). So in the following definition,
 the local period map could be replaced in our case  by a moduli map from the base to a moduli space of abelian varieties.

 \begin{Defi} We will say that a smooth torus fibration $X^0\rightarrow B^0$
 has maximal variation
 if the associated (locally defined) period map $B^0\rightarrow \mathcal{D}$ is  generically of maximal
rank $n={\rm dim}\,B$.
\end{Defi}
 The fibration is said to satisfy Matsushita's alternative if it has maximal
  variation or it is   locally constant (the isotrivial case)
Matsushita conjectured that   this alternative holds for Lagrangian fibrations on projective
hyper-K\"ahler manifolds.
This conjecture was proved in \cite{geemenvoisin} assuming
that the Mumford-Tate group of the Hodge structure on the transcendental
 cohomology $H^2(X,\mathbb{Q})_{tr}\subset H^2(X,\mathbb{Q})$ is the full special orthogonal group
 of $H^2(X,\mathbb{Q})_{tr}$ equipped with the Beauville-Bogomolov intersection form, and that $b_2(X)_{tr}\geq 5$.
In particular, it is satisfied by general deformations of $X$ with fixed Picard lattice, assuming
$b_2(X)_{tr}\geq 5$.

Our main result in this paper is the following:
\begin{theo}\label{theomain} Let $X,\,\phi$ be as above, and
let $M\in {\rm Pic}\,X$ restrict to a topologically trivial line bundle
on the smooth fibers $X_b$ of $\phi$. Assume that either the
 fibration is locally isotrivial or ${\rm dim}\,X\leq 8$ and
the variation is maximal. Then the set of points
$b\in B^0$ such that $\nu_M(b)=M_{\mid X_b}$ is a torsion point in ${\rm Pic}\,X_b$ is dense in $B^0$ for the usual topology.
\end{theo}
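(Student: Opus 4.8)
The plan is to convert density of torsion points into a maximal‑rank (transversality) statement about the normal function $\nu_M$, read off from its \emph{Betti coordinates}, and then to prove that statement from the special geometry of Lagrangian fibrations on hyper‑K\"ahler manifolds. On a simply connected open $U\subset B^0$, trivialize the local system $R^1\phi_*\mathbb{Z}$; since $H^1(X_b,\mathbb{R})\to H^1(X_b,\mathcal{O}_{X_b})$ is a real isomorphism, ${\rm Pic}^0(X^0/B^0)_{|U}$ becomes, as a $C^\infty$ torus bundle, the constant bundle $U\times T$ with $T=H^1(X_b,\mathbb{R})/H^1(X_b,\mathbb{Z})$, and $\nu_M$ becomes a real‑analytic map $\psi\colon U\to T$. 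Its torsion points are $\psi^{-1}(T_{\rm tors})$, and $T_{\rm tors}$ is dense in $T$; so wherever $\psi$ is a submersion it is open and the torsion locus is dense there. As $\dim_{\mathbb R}B^0=\dim_{\mathbb R}T=2n$, ``submersion'' means ``local diffeomorphism'', and real‑analyticity upgrades ``local diffeomorphism at one point'' to ``on a dense open set''. Differentiating and using that $\nu_M$ is holomorphic, one identifies $d\psi$ at $b$ with the complex‑linear infinitesimal invariant
\[
\delta\nu_M(b)\colon T_{B^0,b}\longrightarrow H^1(X_b,\mathcal{O}_{X_b}),
\]
up to the $\mathbb{R}$‑linear isomorphism $H^{0,1}(X_b)\xrightarrow{\ \sim\ }H^1(X_b,\mathbb{R})$, $x\mapsto x+\bar x$; hence $d\psi(b)$ is an isomorphism iff $\delta\nu_M(b)$ is. So Theorem~\ref{theomain} follows once one knows that $\delta\nu_M$ is an isomorphism at the general point of $B^0$, or that $\nu_M$ is a torsion section (density being then trivial).

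Next one identifies $\delta\nu_M$. The class $c_1(M)\in H^1(X,\Omega^1_X)$, restricted to the first infinitesimal neighbourhood of $X_b$, dies in $H^1(X_b,\Omega^1_{X_b})$ by topological triviality, hence gives a canonical element of $H^1(X_b,\mathcal{O}_{X_b})\otimes\Omega^1_{B^0,b}={\rm Hom}(T_{B^0,b},H^1(X_b,\mathcal{O}_{X_b}))$, which is $\delta\nu_M(b)$. Using the holomorphic symplectic form and Matsushita's isomorphism $R^1\phi_*\mathcal{O}_X\cong\Omega^1_{B}$ (dually $\phi_*\Omega^1_{X^0/B^0}\cong T_{B^0}$), $\delta\nu_M$ becomes a \emph{symmetric} bilinear form on $T_{B^0}$, i.e.\ a section of ${\rm Sym}^2\Omega^1_{B^0}$ -- the symmetry reflecting that $\nu_M$ is a Lagrangian section of the dual Lagrangian fibration ${\rm Pic}^0(X^0/B^0)\to B^0$. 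Thus the target property becomes: \emph{the quadratic form $\delta\nu_M$ is nondegenerate at the general point of $B^0$} (or $\nu_M$ torsion). Moreover $\delta\nu_M$ obeys a normal‑function equation: its Gauss--Manin derivative is expressed through the cubic form on $B^0$ governing the variation of $R^1\phi_*\mathbb{Q}$ (the Donagi--Markman cubic, closely tied to the Beauville--Bogomolov form), which by definition has trivial radical precisely when the variation is maximal, and vanishes identically precisely in the isotrivial case.

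In the isotrivial case no dimension restriction is needed. After a finite \'etale base change $\widetilde{B^0}\to B^0$ killing the (finite) monodromy, ${\rm Pic}^0$ becomes the constant family $\widehat{A}\times\widetilde{B^0}$ and $\nu_M$ an algebraic map $\widetilde\nu\colon\widetilde{B^0}\to\widehat A$. Extending $\widetilde\nu$ across the discriminant as a section of the N\'eron model and using that $M$ comes from a line bundle on the \emph{compact} $X$, one shows $\widetilde\nu$ factors, up to translation, through the Albanese, so its image is a translate $t+\widehat A_1$ of an abelian subvariety and $\widetilde\nu$ is generically submersive onto it; $t$ is then a torsion point, because a non‑torsion constant section of ${\rm Pic}^0$ does not extend over the discriminant while ``$c_1(M)$ torsion'' forces $M$ torsion since $H^1(X,\mathcal{O}_X)=0$. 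Hence the torsion points of $\widetilde\nu$ -- the preimage of $(t+\widehat A_1)_{\rm tors}$, which is dense in $t+\widehat A_1$ by Manin--Mumford -- are dense in $\widetilde{B^0}$, and the theorem follows in this case.

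The core of the paper is the maximal‑variation case with $\dim X\le 8$, i.e.\ $n\le4$. One first disposes of $\delta\nu_M\equiv0$: then $\nu_M$ is a locally constant, monodromy‑invariant section, hence torsion since the maximal‑variation monodromy has no nonzero invariant vector. Otherwise, suppose for contradiction that $\delta\nu_M$ is everywhere degenerate with nonzero radical $K_b\subset T_{B^0,b}$. Feeding this into the normal‑function equation, the Gauss--Manin flow of the degenerate quadric $\delta\nu_M$ is controlled by the cubic; analysing how $K_b$ propagates, such a configuration should either make the cubic degenerate -- contradicting maximal variation -- or force an ``unlikely'' relation between $c_1(M)$, the Beauville--Bogomolov form and the fibre class $f$. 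Making this precise requires controlling the possible pairs (cubic, degenerate quadric) on $B^0$, i.e.\ the special‑K\"ahler geometry of the base, and the argument closes by a rank count on the bundles ${\rm Sym}^k\Omega^1_{B^0}$ that is valid for $n\le4$ but fails in general. This last point -- the numerical/representation‑theoretic obstruction that only works up to $\dim X=8$ -- is where I expect the real difficulty to lie, and it is exactly why the statement is conditional on $\dim X\le 8$ (excluding essentially only the $\dim X=10$ case).
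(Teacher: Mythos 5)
Your setup (Betti map, density via generic submersivity, real-analyticity) matches the paper's Proposition \ref{ledense}, and you correctly identify the Donagi--Markman cubic as the governing structure. But there is a genuine error at the pivotal step: the claim that $d\psi(b)$ is an isomorphism iff the infinitesimal invariant $\delta\nu_M(b)$ is. The differential of the Betti map is only $\mathbb{R}$-linear, and besides $\delta\nu_M$ it contains a complex-antilinear contribution coming from $\overline{\nabla}$ applied to the $(1,0)$-part $\eta_1$ of the real lift $\tilde{\nu}_{\mathbb{R}}=\tilde{\nu}_{hol}-\eta_1$ (see the computation culminating in (\ref{eqpourdifffinale})). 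A flat non-real lift gives $\delta\nu_M=0$ yet $df_\nu=-\overline{\nabla}\eta_1\neq0$ generically, so the two nondegeneracy conditions are not equivalent. Decoupling these two contributions is exactly the content of the Andr\'e--Corvaja--Zannier Theorem \ref{theoandre}, whose proof needs Andr\'e's monodromy theorem (Zariski density of the monodromy of the Betti map) plus the holomorphic extension $F_\nu$ to $B\times\overline{B}$; your proposal bypasses this entirely, and with it the verification of the ACZ hypotheses (Lemma \ref{lenovhs}: no proper real subvariation of $H_{1,\mathbb{R}}$, which rests on Matsushita's rank-one theorem). Moreover, even granting your reduction, the maximal-variation case is not proved: the passage ``such a configuration should either make the cubic degenerate or force an unlikely relation'' is a heuristic, not an argument. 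The actual mechanism, and the actual source of the bound $\dim X\le 8$, is the Hesse--Gordan--Noether--Lossen classification: a cubic form in $n\le4$ variables all of whose polars $\overline{\nabla}_\lambda=\partial_\lambda g^{(3)}$ are degenerate quadrics is a cone, contradicting maximal variation. Your ``rank count on ${\rm Sym}^k\Omega^1_{B^0}$'' does not substitute for this.

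The isotrivial case also has a gap. After trivializing over a finite cover $B'$, the image of the algebraic map $\nu'_M:B'\to\widehat{A}$ need not be a translate of an abelian subvariety merely because the map factors through the Albanese of a compactification (think of a curve in its Jacobian), and your argument that the translation is by a torsion point is not substantiated. The paper instead runs a local analysis: the Lagrangian condition $\nu^*\sigma_A=0$ (Lemma \ref{lelag}) makes the lift a closed form $df$, the fibers of the Betti map are simultaneously complex, affine and algebraic (Lemmas \ref{leaffine}, \ref{lefibrecomp}, \ref{lealgebraic30sept}), hence parallel by Corollary \ref{coroconstant}, so their tangent spaces form a proper real subvariation of Hodge structure; this is excluded by Lemma \ref{lenovhs}, and the remaining locally constant case is killed by Lemma \ref{letorsionmono} together with $H^1(X,\mathbb{R})=0$. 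None of these hyper-K\"ahler-specific inputs appears in your proposal, and without them the isotrivial case does not close.
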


Applying the result  of \cite{geemenvoisin} mentioned above, we conclude:
\begin{coro} \label{corogen}  Let $X,\,\phi,\,M$ be as in Theorem \ref{theomain}. Assume  $b_2(X)\geq8$. Then the very general deformation $X_t$ of $X$ preserving $L$, $M$  and an ample line bundle on  $X$
satisfies the conclusion of Theorem \ref{theomain}.
\end{coro}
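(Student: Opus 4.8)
\section*{Proof proposal for Corollary \ref{corogen}}

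The plan is to exhibit the very general deformation $X_t$ of $X$ (preserving $L$, $M$ and a fixed ample class $H$) as a projective hyper-K\"ahler manifold which still carries a Lagrangian fibration $\phi_t\colon X_t\to B_t$ with $M_t:=M_{\mid X_t}$ topologically trivial on the smooth fibers, and whose transcendental cohomology is generic enough that the theorem of \cite{geemenvoisin} recalled above applies; Theorem \ref{theomain} then yields the conclusion for $X_t$. By Bogomolov--Tian--Todorov the universal deformation space $\mathrm{Def}(X)$ is smooth of dimension $h^{1,1}(X)=b_2(X)-2$, and by the local Torelli theorem the period map identifies it locally with an open subset of the quadric $\{q=0\}\subset\mathbb{P}(H^2(X,\mathbb{C}))$. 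The condition that a class $\alpha\in\{L,M,H\}$ remain of type $(1,1)$ is that the period point lie on the hyperplane section $\{q(\,\cdot\,,\alpha)=0\}$, which cuts the dimension down by at most $1$. Hence the locus $\mathcal{B}'\subset\mathrm{Def}(X)$ along which $L$, $M$ and $H$ all stay algebraic has $\dim\mathcal{B}'\geq b_2(X)-5\geq 3$ as $b_2(X)\geq 8$; in particular $\mathcal{B}'$ is positive-dimensional, ``very general $t\in\mathcal{B}'$'' makes sense, and for such $t$ one has $\mathrm{NS}(X_t)=\langle L,M,H\rangle$, so $\rho(X_t)\leq 3$ and $b_2(X_t)_{tr}=b_2(X)-\rho(X_t)\geq b_2(X)-3\geq 5$. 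Moreover, at a very general point of $\mathcal{B}'$ the Mumford--Tate group of the Hodge structure $H^2(X_t,\mathbb{Q})_{tr}$ with its Beauville--Bogomolov form is the full special orthogonal group, the locus where it is smaller being a countable union of proper closed subvarieties of $\mathcal{B}'$.

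Next I would verify that the geometric data survive the deformation. Since $H$ stays ample, $X_t$ is projective. Since $L$ stays of type $(1,1)$, Matsushita's theorem on deformations of Lagrangian fibrations provides a Lagrangian fibration $\phi_t\colon X_t\to B_t$ with $L_t$ nef and $q(L_t)=0$ inducing it; for the general member this uses that no new wall divisors appear on a very general $X_t$, which can alternatively be seen from Markman's description of the monodromy and of the nef cone of a hyper-K\"ahler manifold. Finally, $M_t$ is still topologically trivial on the smooth fibers of $\phi_t$: fixing a regular value $b_0$ of $\phi$, its deformations $b(t)$ are regular values of $\phi_t$ for $t$ general, the fibers $X_{t,b(t)}$ form a smooth proper family of abelian varieties, their second integral cohomology groups assemble into a local system, and $t\mapsto c_1(M_t)_{\mid X_{t,b(t)}}$ is an integral section of it which varies continuously and vanishes at $t=0$; hence it vanishes identically, and by monodromy-invariance the same holds for $M_t$ restricted to every smooth fiber of $\phi_t$. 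Thus $(X_t,\phi_t,M_t)$ is once again of the form considered in Theorem \ref{theomain}.

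It then remains to invoke \cite{geemenvoisin}: since $b_2(X_t)_{tr}\geq 5$ and the Mumford--Tate group of $H^2(X_t,\mathbb{Q})_{tr}$ is the full special orthogonal group, the fibration $\phi_t$ satisfies Matsushita's alternative, i.e. it is locally isotrivial or of maximal variation. As $\dim X_t=\dim X\leq 8$, both cases fall under the hypotheses of Theorem \ref{theomain}, which then gives the density of the torsion points of $\nu_{M_t}$ in $B_t^0$, that is, the asserted conclusion for $X_t$. I expect the main obstacle to lie in the middle step: one must import the deformation-invariance of the Lagrangian fibration --- the nefness of $L_t$ and the existence of $\phi_t$ on the general deformation, not merely on deformations close to $X$, which is precisely what Matsushita's deformation theorem (or a Markman-type wall-crossing argument) supplies --- and one must set up carefully the local system identifying $c_1(M_t)_{\mid X_{t,b}}$ along the relative regular locus of $\phi_t$. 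The other points --- the codimension count, the bounds on $\rho(X_t)$ and $b_2(X_t)_{tr}$, and the genericity of the Mumford--Tate group --- are routine.
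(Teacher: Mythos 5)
Your proposal is correct and follows essentially the same route as the paper, which deduces Corollary \ref{corogen} directly by combining the deformation-invariance of the Lagrangian fibration and of the topological triviality of $M$ on its fibers with the result of \cite{geemenvoisin} guaranteeing Matsushita's alternative for the very general deformation, and then invoking Theorem \ref{theomain}. The paper leaves these verifications implicit, so your expanded write-up (the period-domain dimension count, the genericity of the Mumford--Tate group, and the local-system argument for $M_t$) is just a detailed version of the same argument.
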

The restriction to dimension $\leq8$ is certainly not essential here and we believe that
 the result is true in any dimension although the
  analysis seems to be very complicated in higher dimension. The restriction to dimension $8$ appears in the analysis of the constraints on the
infinitesimal variation of Hodge structures imposed by contradicting the conclusion of Theorem
\ref{theomain}. In dimension $10$ we completely  classify the possible (although improbable) situation  where the conclusion of Theorem \ref{theomain} does not hold (see Proposition \ref{prodim10} proved  in Section \ref{secn5}).

In order to introduce
some of the  ingredients used in the proof  of  Theorem
\ref{theomain},
we will start with  the general case of an  elliptic fibration, (and without any hyper-K\"ahler condition). One gets in this case the following result, which, although we could not find a reference,
 is presumably not new, but will serve
as  a toy example for the strategy used.
\begin{theo}\label{theoelliptic} Let $\phi:X\rightarrow B$ be an elliptic fibration, where $B$ is a smooth
projective variety and $X$ is smooth projective, and let $B^0\subset B$ be the open set of regular values of
$\phi$. Let $M$ be a line bundle on $X$ which is of degree $0$
on the fibers of $\phi$. Then either  the set of points
$b\in B^0$ such that $\sigma_M(b)=M_{\mid X_b}$ is a torsion point in ${\rm Pic}\,X_b$ is dense in $B^0$ for the usual topology, or the restriction map $H^1(X,\mathbb{Q})\rightarrow H^1(X_b,\mathbb{Q})$ is surjective. In the second case, the fibration $\phi$ is locally isotrivial and the associated Jacobian fibration is rationally isogenous
 over $B$ to the product $J(X_b)\times B$.

\end{theo}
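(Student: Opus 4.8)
\noindent\emph{Proof strategy for Theorem \ref{theoelliptic}.}

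The plan is to work with the normal function $\sigma_M\in H^0(B^0,\mathcal{J})$, where $\mathcal{J}:=R^1\phi_*\mathcal{O}_{X^0}/R^1\phi_*\mathbb{Z}$ is the relative Jacobian of $\phi$ over $B^0$, and with its torsion locus $\Sigma:=\{b\in B^0:\sigma_M(b)\ \text{torsion}\}=\bigcup_{N\ge1}\Sigma_N$, where $\Sigma_N:=\{b:N\sigma_M(b)=0\}$ is closed analytic in $B^0$. If some $\Sigma_N$ equals $B^0$, i.e. $\sigma_M$ is a torsion section, then $\Sigma=B^0$ is dense and there is nothing to prove; so I may assume $\sigma_M$ has infinite order in the generic fibre. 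Let $\mathbb{H}_\mathbb{Q}:=R^1\phi_*\mathbb{Q}$. By Deligne's theorem of the fixed part, $H^1(X_b,\mathbb{Q})^{\pi_1(B^0,b)}$ is a sub-Hodge structure of the weight-one Hodge structure $H^1(X_b,\mathbb{Q})$; being pure of weight one it has $h^{1,0}=h^{0,1}$, so it is either $0$ or all of $H^1(X_b,\mathbb{Q})$. By the global invariant cycle theorem this invariant subspace equals the image of the restriction $H^1(X,\mathbb{Q})\to H^1(X_b,\mathbb{Q})$, so the second alternative of the theorem is exactly the case $H^1(X_b,\mathbb{Q})^{\pi_1}=H^1(X_b,\mathbb{Q})$.

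Suppose first we are in that case. Then the monodromy acts trivially on $\mathbb{H}_\mathbb{Q}$, hence (the representation being integral) on $\mathbb{H}_\mathbb{Z}=R^1\phi_*\mathbb{Z}$, which is therefore constant. The period map $b\mapsto F^1_b\subset H^1(X_b,\mathbb{C})$ is then a holomorphic map from $B^0$ to the period domain, which is biholomorphic to the unit disc $\Delta$, hence bounded; since $B\setminus B^0$ is a proper closed analytic subset of the projective variety $B$, it extends to a holomorphic map $B\to\overline{\Delta}$, which is constant because $B$ is compact and connected. So $\phi$ is isotrivial, $F^1$ is a constant subbundle of $\mathbb{H}_\mathbb{C}$, the Jacobian fibration is isomorphic over $B^0$ to the constant abelian scheme $J(X_b)\times B^0$, and hence rationally isogenous over $B$ to $J(X_b)\times B$. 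This gives the second alternative. From now on I assume $H^1(X_b,\mathbb{Q})^{\pi_1}=0$, so the restriction map is not surjective, and I must show $\Sigma$ is dense in $B^0$.

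For this I would use the Betti map. Over a simply connected open $U\subset B^0$, a flat trivialisation of $\mathbb{H}_\mathbb{Z}$ identifies $\mathcal{J}|_U$ real-analytically with $U\times(\mathbb{R}^2/\mathbb{Z}^2)$, so that $\sigma_M$ becomes a real-analytic map $\beta_M:U\to\mathbb{R}^2/\mathbb{Z}^2$ with $\Sigma\cap U=\beta_M^{-1}(\mathbb{Q}^2/\mathbb{Z}^2)$; globally these maps glue, on the universal cover $\widetilde{B^0}$, to one equivariant for the monodromy action of $\pi_1(B^0)$ on $\mathbb{R}^2/\mathbb{Z}^2$. The argument then rests on two points. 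First: \emph{if $\beta_M$ is a submersion at even one point of $B^0$, then $\Sigma$ is dense.} Indeed some $2\times2$ minor of the real Jacobian of $\beta_M$ is then a real-analytic function on the connected manifold $B^0$ not identically zero, hence nonvanishing on a dense open set $W$; on $W$ the map $\beta_M$ is open, so $\beta_M^{-1}(\mathbb{Q}^2/\mathbb{Z}^2)$ is dense in $W$, hence in $B^0$. Second, and decisively: \emph{if $\sigma_M$ has infinite order and $\beta_M$ is nowhere a submersion, then $H^1(X_b,\mathbb{Q})^{\pi_1}\ne0$} --- contradicting the standing hypothesis, which finishes the proof. (A transparent instance of the second point: if $c_1(M)=0$ in $H^2(X,\mathbb{Q})$ then $M^{\otimes N}\in\mathrm{Pic}^0X$ for some $N$, and the restriction homomorphism $\mathrm{Pic}^0X\to J(X_b)$ factors through the cohomology map $H^1(X,\mathbb{Q})\to H^1(X_b,\mathbb{Q})$ of image $H^1(X_b,\mathbb{Q})^{\pi_1}=0$, so $N\sigma_M=0$, contradicting infinite order.)

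The main obstacle is this second point --- classifying the normal functions whose Betti map is nowhere submersive. If $\beta_M$ is constant, its flat lift $\tilde\sigma_M$ to $\mathbb{H}_\mathbb{R}$ is a nonzero monodromy-invariant section (nonzero since $\sigma_M$ is not torsion), producing a nonzero vector of $H^1(X_b,\mathbb{R})^{\pi_1}$ and hence of $H^1(X_b,\mathbb{Q})^{\pi_1}$. If $\beta_M$ is nonconstant and nowhere submersive, one studies the connected closed monodromy-invariant set $\overline{\beta_M(\widetilde{B^0})}\subset\mathbb{R}^2/\mathbb{Z}^2$ through the size of the monodromy group $G\subset\mathrm{SL}_2(\mathbb{Z})$: when $G$ is Zariski dense, rigidity of such group actions on the $2$-torus forces this set to be a single point or the whole torus, the first case giving $\beta_M$ constant and the second being excluded by the holomorphy of the period map and of $\tilde\sigma_M$; when $G$ is not Zariski dense it is virtually generated by one parabolic or hyperbolic element, and a direct computation with $\beta_M$, again using holomorphy of the period $\tau$ and of $\tilde\sigma_M$, produces either a nonzero $G$-invariant rational class or forces $\tilde\sigma_M$ to be real-valued, hence constant. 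In the higher-rank situation of Theorem \ref{theomain} this same dichotomy --- density of $\Sigma$ versus $\tilde\sigma_M$ being trapped in the fixed part --- is what the analysis of the infinitesimal variation of Hodge structure controls, its intricacy (rather than anything essential) being responsible for the restriction $\dim X\le8$.
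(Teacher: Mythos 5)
Your overall architecture agrees with the paper's: reduce to the Betti map, show that submersivity at one point gives density by real analyticity, and convert non-submersivity into a monodromy-invariant class via Deligne's invariant cycle theorem. Your treatment of the second alternative (trivial monodromy $\Rightarrow$ constant period map by boundedness and compactness of $B$ $\Rightarrow$ isotriviality and isogeny with the constant family) is correct and essentially what the paper does, and your "first point" is Proposition \ref{ledense}(i).

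The genuine gap is in your "second point", in the case where $\beta_M$ is nonconstant but nowhere submersive. You propose to analyze the closed invariant set $\overline{\beta_M(\widetilde{B^0})}\subset\mathbb{R}^2/\mathbb{Z}^2$ through the dynamics of the monodromy group, invoking rigidity of actions on the $2$-torus and leaving the exclusion of the case ``closure equals the whole torus'' to an unexplained appeal to holomorphy (note that Sard's theorem only bounds the measure of the image, not of its closure, so nothing immediate rules this out); the non-Zariski-dense case is likewise only asserted. None of this is carried out, and none of it is needed: the decisive observation, which you miss, is that the fibers of $\beta_M$ are countable unions of complex-analytic subsets (they are cut out by holomorphic sections $\tilde\alpha^{0,1}-\eta-\beta^{0,1}$ of $\mathcal{H}^{0,1}$), so $\ker d\beta_{M,b}$ is a complex subspace of $T_{B^0,b}$ and the real rank of $d\beta_{M,b}$ is even. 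Since the target $H^1(X_b,\mathbb{R})$ has real dimension $2$, ``nowhere submersive'' forces rank $0$ everywhere, i.e.\ $\beta_M$ is locally constant, and your nonconstant case is vacuous (this is Lemma \ref{leker} and Proposition \ref{ledense}(ii) in the text). A smaller imprecision: in the constant case the flat lift $\tilde\sigma_M$ is \emph{not} a monodromy-invariant section of $\mathbb{H}_\mathbb{R}$ as you assert, only invariant modulo $\mathbb{H}_\mathbb{Z}$; one still gets a nonzero rational invariant class, but via the elementary Lemma \ref{letorsionmono} (if $V_\mathbb{Q}^{inv}=0$ then every $v\in V_\mathbb{R}$ with $\rho(\gamma)v-v\in V_\mathbb{Q}$ for all $\gamma$ is rational), not by pretending the lift itself is invariant.
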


 Our approach to Theorem \ref{theomain} is infinitesimal and completed by an analysis of the monodromy. It uses the easy Proposition
\ref{ledense} which says that the torsion points of $\nu$ are dense in $B$ if the
natural locally defined map   $f_\nu:B\rightarrow H_1(A_{t_0},\mathbb{R})$ obtained
from $\nu$  by a
real analytic trivialization of the family of complex tori $A\rightarrow B$ is generically
submersive. This map is called the Betti map in \cite{ACZ}, \cite{zannier}.
A key tool for  our work  is the following very useful result of Andr\'e-Corvaja-Zannier
\cite{ACZ}.
\begin{theo}\label{theoandre}  Let $\pi: A\rightarrow B$ be a family of abelian varieties of dimension $d$, where $B$ is quasi-projective over $\mathbb{C}$ and ${\rm dim}\,B\geq d$.
Let $\nu$ be an algebraic section of $\pi$. Assume

 (i) The family has no locally trivial subfamily.

  (ii) The multiples $m \nu(B)$ are Zariski dense in $A$.

  Then if the real analytic map $f_\nu$ is never of maximal rank, the variation of Hodge structure
  of weight $1$ is degenerate in the following sense:

  For any $b\in B$, for any $\lambda\in H^{1,0}(A_b)$ the map
  $$\overline{\nabla}_\lambda:T_{B,b}\rightarrow H^{0,1}(A_b)$$
  is not of maximal rank.

\end{theo}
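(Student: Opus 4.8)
The plan is to express the rank of the Betti map $f_\nu$ in Hodge-theoretic terms and then to upgrade a rank deficiency along the single section $\nu$ to a rank deficiency of the entire weight-one variation, using hypotheses (i) and (ii). First I would set up the Gauss--Manin computation. Write the family of complex tori as $\mathcal{V}/\Lambda$, with $\Lambda=R_1\pi_*\mathbb{Z}$ and $\mathcal{V}$ the relative Lie algebra, realized as the holomorphic quotient $\mathcal{H}/\mathcal{F}^1$ of $\mathcal{H}:=R^1\pi_*\mathbb{C}\otimes\mathcal{O}_B$ by the Hodge bundle, so $\mathcal{V}_b\cong H^{0,1}(A_b)$. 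The real-analytic trivialization identifies $\mathcal{V}$ with $\Lambda\otimes_{\mathbb{Z}}\mathbb{R}$, under which the flat structure becomes the Gauss--Manin connection $\nabla$. Choosing locally a holomorphic lift $\hat\nu$ of $\nu$ to $\mathcal{H}$ and writing $\nu$ in a flat frame, one checks that $df_\nu(b)$, read through the identification $\Lambda_b\otimes\mathbb{R}\cong H^{0,1}(A_b)$, is $\mathbb{C}$-linear for the Hodge complex structure on the fibre (varying with $b$), and is the sum of a reduction modulo $\mathcal{F}^1$ of $\nabla\hat\nu$ and a correction term $\overline\nabla_{g(b)}$ with $g(b)\in H^{1,0}(A_b)$, the total being intrinsic (independent of the lift) and equal to the infinitesimal invariant of the normal function attached to $\nu$. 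Hence the real rank of $df_\nu(b)$ is twice the complex rank of the resulting map $T_{B,b}\to H^{0,1}(A_b)$, and ``$f_\nu$ is never of maximal rank'' means exactly that this $\mathbb{C}$-linear map is never surjective. This step is a routine local computation.

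Next I would package the flat structure geometrically: it descends to a real-analytic foliation on the total space $A$, transverse to the fibres, whose leaves are the fibres of the local Betti projections $A\to(\mathbb{R}/\mathbb{Z})^{2d}$ and whose tangent bundle is the kernel of the differential of such a projection. The deficiency ``$f_\nu$ is never of maximal rank'' then says precisely that the algebraic subvariety $\nu(B)$ is nowhere transverse to this Betti foliation, and the same holds for every multiple $m\nu(B)$ since the isogeny $[m]$ intertwines the Betti projection with multiplication by $m$; equivalently, through every point of every $m\nu(B)$ there passes a positive-dimensional germ of a leaf of the foliation contained in $m\nu(B)$. Now I would invoke hypotheses (ii) and (i): by (ii) the subvarieties $m\nu(B)$, hence the leaf germs they contain, are Zariski dense in $A$, and by (i) the leaves of the Betti foliation are not algebraic.

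The crux, and the step I expect to be the main obstacle, is to deduce from ``every $m\nu(B)$ is nowhere transverse to the Betti foliation'' the intrinsic conclusion ``$\overline\nabla_\lambda$ is nowhere of maximal rank, for every $\lambda$''. The link is that the position of the tangent space of the Betti foliation inside $T_{A,x}$, as $x$ varies within a fibre $A_b$, is governed by the IVHS map $\overline\nabla$ --- translating $x$ translates the leaf through $x$ by a Betti-flat section, whose infinitesimal effect is exactly contraction with $\overline\nabla$ --- so a uniform non-transversality to the foliation of a Zariski-dense family of algebraic subvarieties ought to force a uniform upper bound on the ranks of all the maps $\overline\nabla_\lambda$. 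Making this rigorous requires (a) passing to the limit $m\to\infty$, in which the tangent directions to $m\nu(B)$ become vertical; (b) using the Zariski density to spread out over the fibres and thereby eliminate any dependence on the particular section $\nu$; and (c) using the non-algebraicity of the leaves (this is where (i) is needed) to rule out degenerate configurations --- a point of Ax--Lindemann / o-minimality flavour, which one could instead try to handle by a direct analysis of the real-analytic subset of $B$ along which $f_\nu$ drops rank. Granting this propagation step, the theorem follows by contraposition: if $\overline\nabla_{\lambda_0}$ had maximal rank at some $(b_0,\lambda_0)$, the propagation would yield a multiple $m\nu(B)$ transverse to the Betti foliation at some point, hence by the first two steps a point of $B$ at which $f_\nu$ has maximal rank, contrary to hypothesis.
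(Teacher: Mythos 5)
Your first step is essentially correct and matches the paper's setup: the differential of the Betti map, read through the real-analytic identification $H_1(A_b,\mathbb{R})\cong H^{0,1}(A_b)$, is $\mathbb{C}$-linear for the fibrewise Hodge complex structure and is computed by the infinitesimal invariant $q(\nabla\hat\nu)+\overline{\nabla}_{g(b)}$; in the paper this appears in the equivalent guise of the holomorphic extension $F_\nu$ of $f_\nu$ to $B\times\overline{B}$. But the step you yourself flag as ``the crux and the main obstacle'' --- passing from the degeneracy of this one map, which only sees $\overline{\nabla}_\lambda$ for the particular correction term $\lambda=g(b)$ attached to the section $\nu$, to the degeneracy of $\overline{\nabla}_\lambda$ for \emph{every} $\lambda\in H^{1,0}(A_b)$ --- is the entire content of the theorem, and your proposal does not supply it. The mechanisms you suggest (letting $m\to\infty$ so that tangent directions to $m\nu(B)$ become vertical; a spreading-out over the fibres; an Ax--Lindemann/o-minimality argument) are not how the propagation works, and nothing in your sketch converts the countable family of correction terms produced by the multiples $m\nu$ into the full linear space of $\lambda$'s. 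Your closing contraposition (``maximal rank of $\overline{\nabla}_{\lambda_0}$ somewhere would yield a transverse multiple $m\nu(B)$'') presupposes exactly this unproved propagation in reverse.

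The missing ingredient is Andr\'e's monodromy theorem (Theorem \ref{theoandremono}): under hypotheses (i) and (ii), the set of pairs $(u,v)\in H_1(A_{\tilde b},\mathbb{Z})^2$ by which the holomorphic and antiholomorphic lifts of $\nu$ are translated under $\pi_1$ is \emph{Zariski dense} in $H_1(A_{\tilde b},\mathbb{C})$. This is where (i) and (ii) actually enter, and it is a nontrivial arithmetic input (a theorem of the fixed part for the mixed Hodge structure attached to $\nu$), not something recoverable from the foliation picture alone. Granting it, the paper works on $B_{\Gamma_0}\times\overline{B_{\Gamma_0}}$, observes that a monodromy translate replaces $F_\nu$ by $F_\nu+u-\lambda_1$ where $v=\lambda_1-\lambda_2$ is the decomposition of $v$ along $\mathcal{K}_1\oplus\mathcal{K}_2$, uses the fact that the locus of $\mu$ for which $F_\nu-\mu_1$ is degenerate at a given point is Zariski closed (Lemma \ref{ledensepourandre}) to pass from the dense set of integral $v$'s to all complex $\mu$, extracts the leading linear part in $\mu$ to conclude that $d\mu_1$ itself is everywhere degenerate, and finally computes $d\mu_1=\overline{\nabla}(\lambda)-\overline{\overline{\nabla}(\lambda)}$ on the diagonal for $\mu=\lambda+\overline{\lambda}$ to obtain the stated degeneracy of $\overline{\nabla}$. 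None of these steps is present in your proposal, so the argument as written has a genuine gap at its central point.
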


We refer to  Section
\ref{secACZ}  for the definition of $\overline{\nabla}$. We will also for completeness sketch the proof of this result in that section.

\subsection{An application to the Chow ring of hyper-K\"ahler fourfolds}
Let us explain one consequence of Theorem \ref{theomain}, which was our motivation for this work.
Following the discovery made in \cite{beauvoi} that a projective $K3$ surface $S$ has
a canonical $0$-cycle  $o_S$ with the property that for any two divisors $D,\,D'$
on $S$, the intersection $D\cdot D'$ is a multiple of $o_S$ in ${\rm CH}_0(S)$, Beauville conjectured the
following:
\begin{conj} \label{conjbeau} (See \cite{beau}). Let $X$ be a projective hyper-K\"ahler manifold. Then the cohomological cycle
class restricted to the subalgebra of ${\rm CH}(X)_\mathbb{Q}$ generated by divisor classes
is injective.
\end{conj}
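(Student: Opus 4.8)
The plan is to recast Conjecture \ref{conjbeau} as a statement about \emph{lifting cohomological relations to rational equivalence}, and then to produce these lifts from the geometry of isotropic divisor classes. Write $R^\ast(X)\subset {\rm CH}^\ast(X)_\mathbb{Q}$ for the subalgebra generated by $NS(X)_\mathbb{Q}$ and $\bar R^\ast=cl(R^\ast)\subset H^\ast(X,\mathbb{Q})$ for its image. Since $cl$ is by construction surjective onto $\bar R^\ast$ in each codimension, injectivity of $cl_{\mid R^\ast}$ is equivalent to the equality of graded dimensions $\dim R^k=\dim \bar R^k$, that is, to the assertion that every polynomial relation among divisors holding in cohomology already holds in ${\rm CH}^\ast(X)_\mathbb{Q}$. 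The cohomological relations are completely understood: by Verbitsky's structure theorem the subalgebra of $H^\ast(X)$ generated by $H^2(X)$ is $\mathrm{Sym}^\ast H^2(X)$ modulo the ideal generated by the classes $\alpha^{n+1}$ with $q(\alpha)=0$ for the Beauville--Bogomolov form. Hence $\ker(\mathrm{Sym}^\ast NS_\mathbb{Q}\to H^\ast)$ is an explicit ideal, and because $\ker(\mathrm{Sym}^\ast NS_\mathbb{Q}\to {\rm CH})$ is \emph{also} an ideal, it suffices to lift the generating relations $\alpha^{n+1}=0$ to rational equivalence, together with the relations forced in codimension $>2n$, which hold trivially since ${\rm CH}^{>2n}(X)=0$.

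First I would treat the generating relations attached to an isotropic class that is moreover nef. Matsushita's theorem \cite{matsumain} together with the SYZ picture identifies such a class with a suitable multiple of $\alpha=\phi^\ast h$ for a Lagrangian fibration $\phi:X\to \mathbb{P}^n$ and $h$ the hyperplane class; then $\alpha^{n+1}=\phi^\ast(h^{n+1})=0$ in ${\rm CH}^{n+1}(X)_\mathbb{Q}$ because $h^{n+1}=0$ already on $\mathbb{P}^n$. A general isotropic class need not be nef, but it lies on the boundary of the positive cone and can be moved into the movable cone by a composition of monodromy reflections and birational self-maps; invoking Rie\ss's theorem that birational hyper-K\"ahler manifolds have isomorphic rational Chow rings as graded algebras, the relation for the transported class descends to $\alpha^{n+1}=0$ for the original one. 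This would produce in ${\rm CH}^\ast(X)_\mathbb{Q}$ all relations generated by the \emph{rational} isotropic classes of $NS(X)$.

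The remaining, and genuinely hard, point is that when $NS(X)$ carries few rational isotropic classes the ideal $\ker(\mathrm{Sym}^\ast NS_\mathbb{Q}\to H^\ast)$ is not generated by the relations above in codimension $\le 2n$. To reach these I would deform $X$ inside the Noether--Lefschetz locus $T$ along which the given divisors stay algebraic: the cohomological relation is monodromy-invariant and persists over all of $T$, while over a dense subset of $T$ one can arrange $NS$ to acquire rational isotropic classes, even a Lagrangian fibration, making the Chow-theoretic relation provable there by the previous paragraph. The obstacle is then to \emph{propagate} a relation that is rationally trivial on a dense set of members of the family to the member $X$ itself: Chow groups are not locally constant, so this is a Bloch--Beilinson-type propagation problem, and it is exactly here that the conjecture remains open. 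It is also here that Theorem \ref{theomain} intervenes as evidence: on a Lagrangian fibration the density of the points $b$ for which $M_{\mid X_b}$ is torsion makes precisely this propagation mechanism available for the divisor $M$ and the fibre-related classes, yielding the instances of the predicted vanishing in ${\rm CH}^\ast(X)_\mathbb{Q}$. I therefore expect this propagation step---transferring a fibrewise cohomologically trivial relation from a dense locus to every fibre at the level of rational equivalence---to be the main obstacle, the density statement of Theorem \ref{theomain} being the input that overcomes it in the Lagrangian case and supplies the \emph{further evidence} for Conjecture \ref{conjbeau}.
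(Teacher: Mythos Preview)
The statement is a \emph{conjecture}, not a theorem: the paper does not prove Conjecture \ref{conjbeau}, and there is no ``paper's own proof'' to compare against. What the paper establishes is only the partial result Theorem \ref{theocons}, namely that the single relation $L^n\cdot D=0$ lifts from cohomology to ${\rm CH}^{n+1}(X)_\mathbb{Q}$ when $L$ is the pull-back of an ample class from the base of a Lagrangian fibration, $q(L,D)=0$, ${\rm dim}\,X\le 8$ and $b_2(X)\ge 8$. Your text is not a proof either, and to your credit you say so explicitly at the propagation step; but it should not be labelled a proof of the conjecture.

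That said, the strategy you outline is essentially the one pursued by Riess in \cite{riess1}, \cite{riess2}, which the paper cites. Two gaps in your sketch occur \emph{before} the propagation difficulty you flag. First, the claim that a nef isotropic class on $X$ is $\phi^*h$ for a Lagrangian fibration $\phi$ is the hyper-K\"ahler SYZ conjecture; it is not a theorem of \cite{matsumain} and is known only for certain deformation types (see the paper's discussion of \cite{matsushita13}). Second, the assertion that an arbitrary rational isotropic class in $NS(X)$ can be moved to a nef one by monodromy reflections and birational maps is not established in general: the monodromy orbit of an isotropic class and the wall-and-chamber structure of the movable cone do not obviously cooperate this way for every $X$. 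Both of these are genuine inputs in Riess's work, and they are why her result is restricted to specific deformation types admitting a Lagrangian fibration.

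Finally, your last paragraph misstates where Theorem \ref{theomain} enters. In the paper, the density of torsion points is not used to propagate relations across a family of hyper-K\"ahler manifolds; it is used on a \emph{fixed} $X$ to find a single fibre $X_b$ on which $D_{\mid X_b}$ is torsion in ${\rm Pic}^0(X_b)$, so that $D\cdot X_b$, hence $D\cdot L^n$, vanishes in ${\rm CH}^{n+1}(X)_\mathbb{Q}$. The deformation step in the paper's proof of Theorem \ref{theocons} goes in the opposite direction (from the very general fibre to $X$) and uses only the elementary fact that vanishing of a fixed cycle class is a countable-closed condition in the base.
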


This conjecture has been proved in \cite{voisinpamq} for varieties of the form
$S^{[n]}$, where $S$ is a
$K3$ surface and $n\leq 2b_2(S)_{tr}+4$, and in \cite{fulie} for generalized Kummer varieties. It is also proved in \cite{voisinpamq} for Fano varieties of lines
of cubic fourfolds, which are well-known to be irreducible hyper-K\"ahler fourfolds of $K3^{[2]}$ deformation type
(see \cite{beaudo}).
Finally, Conjecture \ref{conjbeau} is proved by Riess \cite{riess2} in the case of irreducible hyper-K\"ahler varieties of $K3^{[n]}$ or generalized Kummer deformation type admitting a Lagrangian fibration. Here the condition that
$X$ is a deformation of a punctual Hilbert scheme of a $K3$ surface guarantees, according to \cite{matsushita13},
that $X$ satisfies the   conjecture  that any nef line bundle $L$  on $X$ which is isotropic
for the Beauville-Bogomolov quadratic form is  the pull-back of a $\mathbb{Q}$-line bundle
on the basis $B$ of a Lagrangian fibration on $X$.

The application to Conjecture \ref{conjbeau}  we give in this paper  concerns one natural relation in the cohomology ring
of a hyper-K\"ahler manifold equiped with a Lagrangian fibration.
Note that, according to \cite{bover}, the set of polynomial relations between degree $2$ cohomology classes
on an irreducible hyper-K\"ahler $2n$-fold $X$ is generated as an ideal of ${\rm Sym}\,H^2(X,\mathbb{R})$ by
the relations
$$ \alpha^{n+1}=0\,\,{\rm in}\,\,H^{2n+2}(X,\mathbb{R})\,\,{\rm if}\,\,q(\alpha)=0.$$
For the same reason, the cohomological relations between divisor classes $d\in {\rm NS}(X)_{\mathbb{R}}\subset H^2(X,\mathbb{R})$
are generated by the relations
\begin{eqnarray}\label{eqrelcoh}d^{n+1}=0\,\,{\rm in}\,\,H^{2n+2}(X,\mathbb{R})\,\,\,\,{\rm if}\,\,q(d)=0,
\end{eqnarray}
In particular there are no relations in degree $\leq 2n$.
If $X$ is any projective hyper-K\"ahler manifold,  let $l\in H^2(X,\mathbb{Q})$ be such  that $q(l)=0$. One  has
then $l^{n+1}=0$ in
$H^{2n+2}(X,\mathbb{Q})$. If furthermore
$X$
 admits a Lagrangian fibration such that $l=c_1(L)$ and $L$ is pulled-back
 from the base, then one clearly has $L^{n+1}=0$ in ${\rm CH}(X)$ and this is the starting point of Riess' work
 \cite{riess1}, \cite{riess2}.
Observe next that by differentiation of
(\ref{eqrelcoh}) along the tangent space to $Q$ at $l$, namely
$l^{\perp q}$, one gets the relations
\begin{eqnarray}\label{eqrelcoh2}l^{n}h=0\,\,\,\,{\rm in}\,\,H^{2n+2}(X,\mathbb{R})\,\,{\rm if}\,\,q(l)=0\,\,{\rm and}\,\,q(l,h)=0.
\end{eqnarray}
Our application of Theorem \ref{theomain} is the following result, proving that
if $l=c_1(L)$ as above comes from a Lagrangian fibration, $d=c_1(D)$ is a divisor class with $q(l,d)=0$, and the dimension is $\leq 8$, then (\ref{eqrelcoh2})
already holds in ${\rm CH}(X)_\mathbb{Q}$.

\begin{theo} \label{theocons} Let $\phi:X\rightarrow B$ be a Lagrangian fibration on a projective irreducible hyper-K\"ahler manifold of dimension $2n\leq 8$ with $b_2(X)\geq8$ and let
$L$ generate $\phi^*{\rm Pic}\,B$.  Let furthermore
$D\in {\rm Pic}\,X$ satisfy the property that $L^n\cdot D$ is cohomologous to $0$ on $X$. Then
$L^n\cdot D=0\,\,{\rm in}\,\,{\rm CH}^{n+1}(X)_\mathbb{Q}$.
\end{theo}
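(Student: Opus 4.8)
The plan is to deduce Theorem~\ref{theocons} from Theorem~\ref{theomain} by first reducing the cycle-theoretic statement to a statement about a fibered cycle, then spreading out over the base $B$, and finally using the density of torsion points to force vanishing in $\mathrm{CH}$. First I would set up the geometry: let $\phi:X\to B$ be the Lagrangian fibration, $B^0\subset B$ the locus of regular values, and $X^0\to B^0$ the restriction. The class $l=c_1(L)$ is pulled back from $B$, so $L^n$ restricts to $0$ on each fiber $X_b$ and therefore, up to a nonzero multiple, $L^n$ is rationally equivalent (on $X^0$, and then by a spreading-out/localization argument on $X$) to a cycle supported over a divisor in $B$ — indeed $L^n=\phi^*(\text{cycle of codimension }n\text{ on }B)$ modulo cycles supported on $\phi^{-1}$ of a proper closed subset. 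More precisely the natural object is the $0$-cycle $(L^n\cdot D)_{\mid X_b}\in\mathrm{CH}_0(X_b)$ for $b\in B^0$: since $L_{\mid X_b}$ is trivial (topologically, hence algebraically, as it is a pullback) this $0$-cycle is literally $0$ for each $b$ once we interpret things correctly, so the real content is a "relative over $B$'' statement. The cleaner route is: write $D_{\mid X_b}$; the hypothesis $q(l,d)=0$ together with $\dim X\le 8$ and $b_2\ge 8$ puts us in the situation where Theorem~\ref{theomain} applies to $M:=D$ (or an appropriate twist making $M_{\mid X_b}$ topologically trivial), giving a dense set $T\subset B^0$ of points $b$ where $D_{\mid X_b}$ is torsion in $\mathrm{Pic}\,X_b$.

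Next I would exploit this density. For $b\in T$, $D_{\mid X_b}$ is torsion, so there is $N_b>0$ with $N_b\,D_{\mid X_b}\sim 0$ on $X_b$; by a Noetherian/countability argument there is a fixed $N>0$ working for a dense subset, and then for those $b$ the cycle $N\,D_{\mid X_b}$ is rationally trivial on $X_b$. The key cycle to track is $Z:=L^n\cdot D\in\mathrm{CH}^{n+1}(X)_{\mathbb Q}$, which by hypothesis is cohomologous to $0$. I want to use that $L^n$, being $\phi^*$ of a positive-dimensional class on $B$ (here $\dim B=n$, so $L^n=\phi^*(\mathrm{pt})$ up to scalar — in fact $L^n$ is a positive multiple of the class of a general fiber $[X_b]$ in $H^{2n}(X)$, but as a Chow class $L^n$ need not equal $[X_b]$). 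The strategy of Riess~\cite{riess1,riess2} is relevant: from $q(l)=0$ one gets $L^{n+1}=0$ in $\mathrm{CH}(X)$; differentiating, the natural guess is $L^n\cdot D$ is computed fiberwise. Concretely I would argue: $L^n\cdot D=\phi^*\alpha\cdot D$ for a $0$-cycle class $\alpha$ on $B$ (after replacing $L$ by the line bundle on $B$ it comes from and using the projection formula), so $L^n\cdot D=\phi^*\alpha\cdot D=\phi_*(\phi^*\alpha\cdot D)\cdot(\dots)$ — better: by the projection formula $\phi_*(L^n\cdot D)=\alpha\cdot\phi_*D$ in $\mathrm{CH}(B)$, and more usefully, picking a $0$-cycle $\beta=\sum n_i b_i$ on $B$ with $b_i\in T$ representing $\alpha$ (using that $B$ is rationally connected? — no; rather using moving lemma to represent $\alpha$ by a cycle supported on $T$, possible since $T$ is dense), we get $L^n\cdot D$ is rationally equivalent to $\sum n_i\,(D_{\mid X_{b_i}})$ pushed into $X$, each of which is torsion, hence $N(L^n\cdot D)\sim 0$ and so $L^n\cdot D=0$ in $\mathrm{CH}^{n+1}(X)_{\mathbb Q}$.

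The main obstacle — and the step needing care — is the passage "$L^n\cdot D$ is rationally equivalent to a $\mathbb Q$-combination of fiberwise $0$-cycles $D_{\mid X_{b_i}}$ with $b_i$ ranging over a chosen finite subset of the dense torsion set $T$.'' This requires (a) that $L^n$ itself, as a Chow class, be expressible via $0$-cycles on $B$ supported where we want — which follows because $L=\phi^*L_B$ for a line bundle $L_B$ on $B$ with $L_B^n$ a $0$-cycle on $B$, and any $0$-cycle on $B$ of a given degree can be moved to be supported on the dense set $T$ only if $B$ has trivial $\mathrm{CH}_0$ or at least if $\mathrm{CH}_0(B)$-classes are representable by $T$-supported cycles up to the relevant equivalence — this is automatic since $T$ is Zariski dense and we may use that $L_B^n$ is, up to rational equivalence, supported on any given ample divisor iterated, then intersect into $T$; alternatively use that $B$ is a Fano/rationally connected variety in all known cases (e.g. $B=\mathbb P^n$ for $K3^{[n]}$-type), making $\mathrm{CH}_0(B)_{\mathbb Q}=\mathbb Q$ so $L_B^n=(\deg L_B^n)\,[b]$ for \emph{any} $b$, in particular $b\in T$; and (b) the compatibility $\phi^*[b]\cdot D=\iota_{b*}(D_{\mid X_b})$ in $\mathrm{CH}^{n+1}(X)_{\mathbb Q}$ for $b$ a regular value, which is a standard base-change/excess-intersection computation since $X_b$ is a smooth fiber of dimension $n$ and $[b]$ has codimension $n$. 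Granting these, and the uniform torsion bound $N$ from Theorem~\ref{theomain} via a countability argument, we obtain $N\cdot L^n\cdot D=0$ in $\mathrm{CH}^{n+1}(X)$ and hence $L^n\cdot D=0$ in $\mathrm{CH}^{n+1}(X)_{\mathbb Q}$, as claimed. \cqfd
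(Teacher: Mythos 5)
There is a genuine gap at the very first step: you invoke Theorem \ref{theomain} directly for the given fibration $\phi:X\rightarrow B$, but that theorem has the hypothesis that the fibration is either locally isotrivial or of maximal variation, and nothing in the hypotheses of Theorem \ref{theocons} guarantees that Matsushita's alternative holds for this particular $X$. The paper's proof deals with this by a deformation argument that your proposal omits entirely: one takes the universal family of deformations of $X$ preserving $L$, $D$ and a polarization, observes that the condition ``$D_t\cdot L_t^n$ is rationally equivalent to $0$ modulo torsion'' holds on a countable union of closed algebraic subsets of the base $T$, and therefore reduces to the very general fiber $X_t$, where Corollary \ref{corogen} (resting on the van Geemen--Voisin theorem, which is where the hypothesis $b_2(X)\geq 8$ is actually used) guarantees that the conclusion of Theorem \ref{theomain} holds. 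Without this specialization step your argument does not get off the ground. A second, smaller gap: to apply Theorem \ref{theomain} you must know that $D_{\mid X_b}$ is \emph{topologically} trivial on the fibers; this does not follow formally from ``$L^n\cdot D$ cohomologous to $0$'' but requires Matsushita's result that $H^2(X,\mathbb{Q})\rightarrow H^2(X_b,\mathbb{Q})$ has rank $1$ (equivalently Lemma \ref{lerestri} applied to $[X_b]=\frac{1}{\mu}c_1(L)^n$), which you only gesture at via ``$q(l,d)=0$''.

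On the cycle-theoretic side your mechanism is essentially the paper's, but over-engineered. You do not need density of torsion points, a uniform torsion order $N$, or a moving lemma for $0$-cycles supported on $T$: a single point $b\in B^0$ with $D_{\mid X_b}$ torsion suffices. Indeed all fibers $X_b$, $b\in B^0$, are rationally equivalent in ${\rm CH}(X)_{\mathbb{Q}}$ (because $B\cong\mathbb{P}^n$ by Hwang when $B$ is smooth, or by the result of Lin otherwise), so $L^n=\mu X_b$ in ${\rm CH}^n(X)_{\mathbb{Q}}$ for \emph{every} $b\in B^0$ and some fixed $\mu\neq 0$; choosing $b$ to be a torsion point gives that $L^n\cdot D=\mu\, X_b\cdot D$ is a torsion class in ${\rm CH}^{n+1}(X)$, hence zero in ${\rm CH}^{n+1}(X)_{\mathbb{Q}}$. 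Your hedged discussion of whether $L_B^n$ can be represented by a $0$-cycle supported on $T$ is resolved exactly by this rational equivalence of fibers, which is the statement you should isolate and justify rather than appeal to a general moving lemma.
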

We refer to \cite{lin} for further applications of Theorem \ref{theocons} to Conjecture \ref{conjbeau}.
We close this introduction by mentioning  that there are other  potential applications of the study of Question
\ref{question} in various geometric contexts. Here is an example: Let $S$ be a
$K3$ surface. Huybrechts \cite{huybrechts} defined a constant cycle curve
$C\subset S$ to be a curve all of whose points are rationally equivalent in $S$. All points of $C$ are then
rationally equivalent to the canonical cycle $o_S$.
Rational curves in $S$ are constant cycles curves, but there are many other
constant cycles curves, some of which can be constructed as follows: Choose an ample linear system
$|L|$ on $S$ and let
$\mathcal{C}\subset |L|\times S$ be the universal curve. Over the regular locus
$|L|^0$, we have the Jacobian fibration $\mathcal{J}\rightarrow |L|^0$ and we get
a section
$\nu$ of the pull-back  $\mathcal{J}_\mathcal{C}$ on
$\mathcal{C}$ by defining
$$\nu(c,C)={\rm alb}_C(dc-L_{\mid C}),$$
where $d=L^2$ so that $dc-L_{\mid C}$ has degree $0$ on $C$.
The locus where this  section is of torsion is {\it  expected} to be $1$-dimensional
and to project via the natural map $\mathcal{C}$ to $S$ to a countable union of constant cycles curves. An analysis of Question \ref{question} is necessary here to make this expectation into a statement.
Another related example is as follows: Let now $|L_1|,\,|L_2|$ be two linear systems on the $K3$ surface $S$, and let $N=L_1\cdot L_2$. Choose integers $w_1,\ldots,w_N$ such that $\sum_iw_i=0$.
Over the open set  $(|L_1|\times |L_2|)^0$ where the curves $C_1,\,C_2$ are smooth and meet transversally, there is an \'etale cover $Z\rightarrow (|L_1|\times |L_2|)^0$ parameterizing orderings
of the set $C_1\cap C_2$ and two sections
$$\nu_1:Z\rightarrow \mathcal{J}_{1,Z},\,\nu_2:Z\rightarrow \mathcal{J}_{2,Z},$$
where $\mathcal{J}_{1,Z},\,\mathcal{J}_{2,Z}$ are the pull-backs to $Z$ of the Jacobian fibrations on
$|L_1|$, resp. $|L_2|$. They are defined by
$$\nu_1(z)={\rm alb}_{C_{1,z}}(\sum_iw_i x_{i,z}),\,\nu_2(z)={\rm alb}_{C_{2,z}}(\sum_iw_i x_{i,z}),$$
where $z\in Z$ parameterizes the curves $C_{1,z},\,C_{2,z}$ and the ordering
$C_{1,z}\cap C_{2,z}=(x_{1,z},\ldots,X_{N,z})$ of their intersections.
The vanishing (or torsion)  locus of $(\nu_1,\nu_2)$ is expected to be $0$-dimensional. It provides a geometric way of producing elements of the higher Chow group ${\rm CH}^2(S,1)$ (see \cite{voisinicm}, \cite{collino}, \cite{muellerstach}).

The paper is organized as follows. In the very short section \ref{sec1}, we will show how
Theorem \ref{theocons} follows from Theorem \ref{theomain}. We will prove
Theorem \ref{theoelliptic} in Section \ref{secelliptic}. Section
\ref{secACZ} will be devoted to sketching the proof of the Andr\'e-Corvaja-Zannier theorem.
The proof of Theorem
\ref{theomain} will be given in Section \ref{sec2} in the non-isotrivial case and in Section \ref{sec3} in the isotrivial case. The subsections \ref{subsec}, \ref{secpetitechose} and \ref{secMA} present in detail the local analysis of the nontransversality of a Lagrangian section of a Lagrangian torus fibration and show how it is related  to a degenerate real Monge-Amp\`ere equation. In Section \ref{secn5}, we will analyze the case of a $10$-dimensional Lagrangian fibration and prove Proposition \ref{prodim10}.

\vspace{0.5cm}

{\bf Thanks}. I thank S\'ebastien Boucksom and Jean-Pierre Demailly for useful discussions
concerning the degenerate Monge-Amp\`ere equation. I also thank Pietro Corvaja and Umberto Zannier
for  introducing me to the number theorist terminology of Betti coordinates,
for providing a guide through the existing litterature, for communicating their
 paper \cite{ACZ} and for explaining to me their results. The present  paper is a completely new version of the    paper arXiv:1603.04320, and the use of  their main theorem \ref{theoandre} led me to a much  improved result.
\section{Application of Theorem \ref{theomain} to the Beauville conjecture\label{sec1}}
We prove in this section Theorem \ref{theocons} assuming Theorem \ref{theomain}.
Let $\phi:X\rightarrow B$ be as in the introduction, with ${\rm dim}\,X=2n\leq 8$ and $b_2(X)\geq8$ and let
 $L$ generate $\phi^*{\rm Pic}\,B$ (see \cite{matsumain}).
 We want to prove that for any line bundle $D$ on $X$ such that
 $D\cdot L^n$ is cohomologous to $0$ on $X$, then $D\cdot L^n$ is rationally equivalent to $0$ on $X$ modulo torsion.

 Let us consider a universal  family $\mathcal{X}\rightarrow T$ of deformations of $X$ preserving the line bundles
 $L$, $D$ and an ample line bundle on $X$.
 The very general fiber $X_t$ of this family is projective and admits the deformed line bundles
 $D_t$ and $L_t$. It also admits a Lagrangian fibration associated to $L_t$ (see \cite{matsurank},
 and also \cite{geemenvoisin} if the base is not smooth).
 The condition that $D_t\cdot L_t^n$ is rationally equivalent to $0$ on $X_t$ modulo torsion
 is satisfied on a countable union of closed algebraic subsets of $T$, so if we prove it is satisfied
 at the very general point $t\in T$, it will be also satisfied for any $t\in T$, hence for $X$.
 As $b_2(X)\geq8$, we can apply Corollary \ref{corogen} to
   the very general  deformation  $(X_t,\phi_t, L_t,D_t)$.
  It thus suffices to prove the result when
 $(X,\,\phi,\,D)$ satisfies  the conclusion of Theorem \ref{theomain}, which we assume from now on.
In the
Chow ring ${\rm CH}(X)_\mathbb{Q}$, the fibers $X_b$, for $b\in B^0$, are all rationally equivalent (if
$B$ is smooth, then $B\cong \mathbb{P}^n$ by \cite{hwang},  so this is obvious; if $B$ is not smooth, we refer to \cite{lin} for a proof of this statement). It follows that  we have for any $b\in B$
\begin{eqnarray}
\label{forfibre}L^n=\mu X_b\,\,{\rm in}\,\,{\rm CH}^{n}(X)_\mathbb{Q},
\end{eqnarray}
for some nonzero $\mu\in \mathbb{Z}$.
Let $b\in B$ be a general point.
The kernel of the map $$[X_b]\cup=\frac{1}{\mu}c_1(L)^n\cup: H^2(X,\mathbb{Q})\rightarrow H^{2n+2}(X,\mathbb{Q})$$ is equal to the kernel of the restriction map $H^2(X,\mathbb{Q})\rightarrow H^{2}(X_b,\mathbb{Q})$. This is a general fact proved in \cite[Lemme 1.5 and Remarque 1.6]{voisinlag}:
\begin{lemm} (See \cite{voisinlag}) \label{lerestri} Let $j:Y\rightarrow W$ be a  generically finite morphism, where $Y$ and $W$ are
smooth projective varieties (in particular connected), and let
 ${\rm dim}\,W-{\rm dim}\,Y=k$, ${\rm dim}\,W=m$. Let $\alpha:=j_*[Y]_{fund}\in H^{2k}(W,\mathbb{Q})$.
Then
the two $\mathbb{Q}$-vector subspaces
$$K_1:={\rm Ker}\,(j^*:H^2(W,\mathbb{Q})\rightarrow H^2(Y,\mathbb{Q})),\,\,\,K_2:={\rm Ker}\,(\alpha\cup: H^2(W,\mathbb{Q})\rightarrow H^{2k+2}(W,\mathbb{Q}))$$
of $H^2(W,\mathbb{Q})$
are equal.
\end{lemm}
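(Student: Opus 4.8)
The plan is to treat the two inclusions separately: $K_1\subseteq K_2$ is formal, and $K_2\subseteq K_1$ carries all the content. For $K_1\subseteq K_2$ I would simply invoke the projection formula: for $\beta\in H^2(W,\mathbb{Q})$ one has $\alpha\cup\beta=j_*([Y]_{fund})\cup\beta=j_*(j^*\beta)$, so $j^*\beta=0$ forces $\alpha\cup\beta=0$.

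For $K_2\subseteq K_1$, I would first reduce to the case where $j$ is the inclusion of a smooth subvariety. Write $j=i_Z\circ j_0$ with $Z:=j(Y)\subseteq W$ a closed subvariety of codimension $k$, $i_Z\colon Z\hookrightarrow W$ the inclusion and $j_0\colon Y\to Z$ surjective and generically finite. Since $Y$ is smooth, $j_{0*}j_0^*=\deg(j_0)\cdot\mathrm{id}$ on $H^*(Z,\mathbb{Q})$, hence $j_0^*$ is injective and $K_1=\ker\bigl(i_Z^*\colon H^2(W,\mathbb{Q})\to H^2(Z,\mathbb{Q})\bigr)$; likewise $\alpha=\deg(j_0)\cdot[Z]$, so $K_2=\ker\bigl([Z]\cup\bigr)$. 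Both subspaces thus depend only on $Z$, and the claim becomes: $[Z]\cup\beta=0\Rightarrow i_Z^*\beta=0$. When $Z$ is smooth of dimension $N=m-k$ (the cases $N\le1$ being immediate), fix an ample class $h$ on $W$ and set $h_Z:=i_Z^*h$, which is ample on $Z$. If $[Z]\cup\beta=i_{Z*}(i_Z^*\beta)=0$, then cupping with $\beta'\cup h^{N-2}$ for an arbitrary $\beta'\in H^2(W,\mathbb{Q})$ and using the projection formula gives $\int_Z i_Z^*\beta\cup i_Z^*\beta'\cup h_Z^{N-2}=0$ for every $\beta'$; that is, $i_Z^*\beta$ is orthogonal, for the bilinear form $q_{h_Z}(x,y)=\int_Z x\cup y\cup h_Z^{N-2}$ on $H^2(Z,\mathbb{Q})$, to the subspace $i_Z^*H^2(W,\mathbb{Q})$. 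The point is now Hodge-theoretic: $i_Z^*$ is a morphism of pure weight-$2$ Hodge structures, so $i_Z^*H^2(W,\mathbb{Q})$ is a sub-Hodge structure of $H^2(Z,\mathbb{Q})$, and since $h_Z$ is ample the Hodge--Riemann bilinear relations make $q_{h_Z}$ a polarization of $H^2(Z,\mathbb{Q})$, which therefore restricts to a nondegenerate form on any sub-Hodge structure. As $i_Z^*\beta$ lies in $i_Z^*H^2(W,\mathbb{Q})$ and is $q_{h_Z}$-orthogonal to it, $i_Z^*\beta=0$, i.e.\ $\beta\in K_1$.

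It remains to remove the smoothness assumption on $Z$, which I would do by resolution of singularities: one replaces $Z$ by a resolution and realises $i_Z^*H^2(W,\mathbb{Q})$ as a sub-Hodge structure of the weight-$2$ part of the mixed Hodge structure on $H^2(Z,\mathbb{Q})$ (equivalently, inside $IH^2(Z)$), on which an ample class again induces a polarization compatible with push-forward to $H^{2k+2}(W,\mathbb{Q})$, so that the same orthogonality argument applies. I expect this last step — arranging that a genuine ample class, and hence a genuine polarization, is available on a suitable model of the subvariety $Z$ — to be the only delicate part of the argument; everything else reduces to the projection formula and to the elementary fact that a polarization restricts nondegenerately to any sub-Hodge structure. (In the application made in this paper $Z$ is a smooth fibre, so only the smooth case is needed; for the general statement I would refer to \cite{voisinlag}.)
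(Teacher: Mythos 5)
Two remarks, one on how your proposal relates to the paper and one on a genuine flaw in your key step. The paper does not in fact prove this lemma: it cites \cite{voisinlag} for the general statement, and for the only instance it needs (restriction to a fiber $X_b$ of the Lagrangian fibration) it substitutes a two-line argument via Matsushita's Proposition \ref{promatsu}: the image of $H^2(X,\mathbb{Q})\rightarrow H^2(X_b,\mathbb{Q})$ is one-dimensional, spanned by the polarization, so a class $D$ with $[X_b]\cup D=0$ restricts to a class on $X_b$ which cannot be ample (projection formula plus positivity of $\int_{X_b}(D_{\mid X_b})^n$), hence restricts to zero. Your proposal instead attacks the general statement, which is legitimate and closer in spirit to \cite{voisinlag}; the inclusion $K_1\subseteq K_2$ and the reduction to $Z=j(Y)$ are fine, granting the standard (and, as you note, somewhat delicate) facts about generically finite maps and singular $Z$.

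The error is in the sentence ``the Hodge--Riemann bilinear relations make $q_{h_Z}$ a polarization of $H^2(Z,\mathbb{Q})$, which therefore restricts to a nondegenerate form on any sub-Hodge structure.'' The form $q_{h_Z}(x,y)=\int_Z x\cup y\cup h_Z^{N-2}$ is \emph{not} a polarization of all of $H^2(Z,\mathbb{Q})$: the Hodge--Riemann relations give it opposite signs on $\mathbb{Q}h_Z$ and on $H^{1,1}_{prim,\mathbb{R}}$, so it is indefinite on rational $(1,1)$ classes, and it can restrict degenerately to a sub-Hodge structure. For instance, if $x$ is a rational primitive $(1,1)$ class with $\int_Z x^2h_Z^{N-2}=-\int_Z h_Z^N$, then $\mathbb{Q}(h_Z+x)$ is a sub-Hodge structure on which $q_{h_Z}$ vanishes identically. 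So the general principle you invoke is false, and your argument as written has a gap. It is repairable, but only by using a feature of the specific subspace $V=i_Z^*H^2(W,\mathbb{Q})$ that you did not exploit: $V$ contains the ample class $h_Z=i_Z^*h$, hence splits $q_{h_Z}$-orthogonally as $\mathbb{Q}h_Z\oplus V_{prim}$ with $V_{prim}=V\cap H^2(Z,\mathbb{Q})_{prim}$; moreover your orthogonality relation taken with $\beta'=h$ shows that $i_Z^*\beta$ is itself primitive, so it lies in $V_{prim}$ and is orthogonal to $V_{prim}$. Since $q_{h_Z}$ does polarize the primitive part (after the Weil twist and a global sign), its restriction to the sub-Hodge structure $V_{prim}$ is nondegenerate, whence $i_Z^*\beta=0$. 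With that correction the proof goes through in the smooth case, and the singular case remains the technical point you already deferred to \cite{voisinlag}.
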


Let now $D\in{\rm Pic}\,X$ such that $D\cdot L^n$ is cohomologous to $0$.
Then $c_1(D)\cup [X_b]=0$ by (\ref{forfibre}), and thus  $c_1(D)_{\mid X_b}=0$ in $H^2(X_b,\mathbb{Q})$,
hence also in   $H^2(X_b,\mathbb{Z})$.
As we assumed that $(X,\phi,D)$ satisfies  the conclusion of Theorem \ref{theomain}, there exist points in $B^0$ such that
$D_{\mid X_b}$ is a torsion point in $ {\rm Pic}^0(X_b)$, so that
$D\cdot X_b$ is a torsion cycle in ${\rm CH}^{n+1}(X)$. This implies by  (\ref{forfibre}) that
$D\cdot L^n$ vanishes in  ${\rm CH}^{n+1}(X)_\mathbb{Q}$, which proves Theorem \ref{theocons}.

\begin{rema}{\rm In our specific case, Lemma \ref{lerestri} also follows
directly from Proposition \ref{promatsu} below, that will be also used later on.}
\end{rema}
\begin{prop}\label{promatsu} (Matsushita \cite{matsurank}) In the situation above, the restriction map $H^2(X,\mathbb{Q})\rightarrow  H^2(X_b,\mathbb{Q})$ has
rank $1$.
 \end{prop}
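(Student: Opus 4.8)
Proposition \ref{promatsu} — the plan of proof.

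The statement to prove is that the restriction map $r: H^2(X,\mathbb{Q})\to H^2(X_b,\mathbb{Q})$ has rank exactly $1$, where $\phi:X\to B$ is a Lagrangian fibration on a projective irreducible hyper-K\"ahler $2n$-fold and $X_b$ is a smooth (abelian variety) fiber. The plan is to bound the rank from below and from above separately. The lower bound $\geq 1$ is immediate: the pullback $\phi^*H^2(B,\mathbb{Q})$ is nonzero (as $B$ is projective of positive dimension, so $H^2(B,\mathbb{Q})\neq 0$; indeed when $B$ is smooth it is $\mathbb{P}^n$ by Hwang), and $L=\phi^*(\text{ample})$ restricts to $0$ on $X_b$ — wait, that gives a class in the \emph{kernel}; rather, take any class $\beta$ on $X$ with $\int_{X_b}\beta\wedge(\text{polarization})^{n-1}\neq 0$, e.g. an ample class on $X$, whose restriction to the abelian variety $X_b$ is the fiberwise polarization and hence nonzero. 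So $r\neq 0$ and $\mathrm{rank}\,r\geq 1$.

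The substance is the upper bound $\mathrm{rank}\,r\leq 1$. Here I would argue Hodge-theoretically. The restriction map $r$ is a morphism of Hodge structures of weight $2$; its image $\mathrm{Im}\,r\subset H^2(X_b,\mathbb{Q})$ is a sub-Hodge structure. The key input is the \emph{Lagrangian} condition: the holomorphic symplectic form $\sigma\in H^{2,0}(X)$ restricts to $0$ on $X_b$, because $X_b$ is isotropic (in fact Lagrangian) for $\sigma$. Therefore $\sigma\in\ker r$, so $\mathrm{Im}\,r$ carries a pure Hodge structure of weight $2$ with vanishing $(2,0)$-part, i.e. $\mathrm{Im}\,r\subset H^{1,1}(X_b)\cap H^2(X_b,\mathbb{Q})=\mathrm{NS}(X_b)_{\mathbb{Q}}$. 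Now I invoke the structure of $H^2(X,\mathbb{Q})$ for an irreducible hyper-K\"ahler manifold: the transcendental part $H^2(X,\mathbb{Q})_{tr}$ is an \emph{irreducible} Hodge structure (this is standard; its endomorphism algebra is a field by Zarhin), and it has nonzero $(2,0)$-component. Since $\sigma\in H^2(X,\mathbb{Q})_{tr}\otimes\mathbb{C}$ lies in the kernel of $r$, and $\ker r$ is a sub-Hodge structure, irreducibility forces $H^2(X,\mathbb{Q})_{tr}\subset\ker r$. Hence $r$ factors through $H^2(X,\mathbb{Q})/H^2(X,\mathbb{Q})_{tr}=\mathrm{NS}(X)_{\mathbb{Q}}$, and more precisely $\mathrm{rank}\,r=\mathrm{rank}\,(\mathrm{NS}(X)_{\mathbb{Q}}\to\mathrm{NS}(X_b)_{\mathbb{Q}})$.

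To finish, I must show this last map, restriction of N\'eron–Severi groups, has rank $1$. The image is spanned by classes $c_1(D)|_{X_b}$ for $D\in\mathrm{Pic}\,X$. The crucial point is that for $D\in\mathrm{Pic}\,X$, the restriction $c_1(D)|_{X_b}$ is, up to scalar, a fixed class — namely the fiberwise polarization $\theta_b$. Two ways to see this: (a) the Beauville–Bogomolov form controls cup products, and for $d=c_1(D)$ the class $d|_{X_b}$ is determined by the numbers $q(d,\ell)$ and $q(d,d)$ where $\ell=c_1(L)$ satisfies $q(\ell)=0$; since $X_b$ is $n$-dimensional and $\ell|_{X_b}=0$, one computes $d^{k}\cdot\ell^{n}=\binom{?}{?}q(\ell)^{\cdots}=0$ for $k\geq 1$ via the Fujiki relation, forcing $(d|_{X_b})^2\cdot(\text{pol})^{n-2}$ proportional to $(\theta_b)^2\cdot(\text{pol})^{n-2}$, whence $d|_{X_b}\in\mathbb{Q}\theta_b$ on the simple-ish abelian variety; more cleanly, (b) this is exactly Matsushita's argument in \cite{matsurank}: the Fujiki relation $\alpha^{2n}=c\,q(\alpha)^n$ on $H^2(X)$, restricted via $\ell^n=\mu[X_b]$ (as in \eqref{forfibre}), gives $d|_{X_b}^2\cdot\ell^{n-1}\cdot(\cdots)$ expressions that vanish unless the class is the polarization. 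I would carry out the Fujiki-relation computation: expand $q(td+s\ell)=t^2q(d)+2ts\,q(d,\ell)$ (using $q(\ell)=0$), raise to the $n$-th power, multiply by $\ell^{n}$ inside $H^{\bullet}(X)$, and match with $(td+s\ell)^{2n}$; the vanishing $\ell^{n+1}=0$ and $\ell^{n}=\mu[X_b]$ then pin down $\int_{X_b}(d|_{X_b})\wedge\theta_b^{n-1}$ and $\int_{X_b}(d|_{X_b})^2\wedge\theta_b^{n-2}$ in terms of $q(d,\ell)$ alone, and since a nonzero class in $\mathrm{NS}(X_b)_{\mathbb{Q}}$ is detected by such integrals (the polarization being anisotropic), this shows $d|_{X_b}$ is proportional to $\theta_b$. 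Therefore $\mathrm{rank}\,r=1$.

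The main obstacle is the final N\'eron–Severi step — proving that \emph{every} divisor class on $X$ restricts to a multiple of the canonical fiberwise polarization on $X_b$. One must be careful that it is not a priori obvious the restricted classes span only a line, since $X_b$ can have large Picard rank; the argument genuinely uses the Fujiki relation together with $q(\ell)=0$ and $\ell^{n+1}=0$, i.e. it is special to the Lagrangian/hyper-K\"ahler setting and not formal. Everything else (the Hodge-structure irreducibility of the transcendental lattice, $\sigma|_{X_b}=0$) is standard and quick.

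\cqfd
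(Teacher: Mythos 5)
The paper offers no proof of Proposition \ref{promatsu}: it is quoted from Matsushita \cite{matsurank}, and the only argument printed near the statement is the deduction of Lemma \ref{lerestri} \emph{from} it. So your proposal has to be judged on its own, and it is correct; it is, in substance, a reconstruction of the cited argument. The Hodge-theoretic reduction is sound: $\sigma_{\mid X_b}=0$ puts $\sigma$ in the complexified kernel of $r$, the irreducibility of $H^2(X,\mathbb{Q})_{tr}$ then forces $H^2(X,\mathbb{Q})_{tr}\subset{\rm Ker}\,r$, and since $q$ is nondegenerate on ${\rm NS}(X)_{\mathbb{Q}}$ the map $r$ factors through ${\rm NS}(X)_{\mathbb{Q}}$. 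The final step is carried by the polarized Fujiki relation: with $\ell=c_1(L)$, $q(\ell)=0$ and $\ell^n=\mu[X_b]$ in cohomology, every perfect matching contributing to $\int_X\ell^n d^k h^{n-k}$ must pair each of the $n$ copies of $\ell$ with a non-$\ell$ class, so this integral equals $C_k\,q(\ell,d)^kq(\ell,h)^{n-k}$ with $C_k>0$; taking $d'=d-\bigl(q(\ell,d)/q(\ell,h)\bigr)h$ (legitimate since $\int_X\ell^nh^n=\mu\int_{X_b}(h_{\mid X_b})^n>0$ is a positive multiple of $q(\ell,h)^n$, so $q(\ell,h)\neq0$) gives $\int_{X_b}(d'_{\mid X_b})(h_{\mid X_b})^{n-1}=\int_{X_b}(d'_{\mid X_b})^2(h_{\mid X_b})^{n-2}=0$, and the Hodge index theorem on the abelian variety $X_b$ forces $d'_{\mid X_b}=0$, i.e. $d_{\mid X_b}\in\mathbb{Q}\,h_{\mid X_b}$. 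Two cosmetic points: your alternative sketch (a), with its undetermined binomial coefficients, should be dropped in favour of (b); and in (b) the phrase ``multiply by $\ell^n$ and match with $(td+s\ell)^{2n}$'' should be replaced by the cleaner statement that one applies the polarized Fujiki form to the $2n$-tuple $(\ell,\ldots,\ell,d,\ldots,d,h,\ldots,h)$ and uses $q(\ell)=0$ to kill every matching pairing two copies of $\ell$.
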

 Indeed, $K_1$ is always  included in $K_2$, and in the other direction,
if $[X_b]\cup D$ is cohomologous to $0$ in $X$, $D_{\mid X_b}$ cannot be ample, hence it must have trivial
first Chern class by  Proposition \ref{promatsu}.
\section{A toy case: elliptic fibrations \label{secelliptic}}
We will use the same notation
 $\phi:X^0\rightarrow B^0$ for  the restriction to $B^0$ of the elliptic fibration
$\phi: X\rightarrow B$. The associated Jacobian fibration $J\rightarrow B^0$ is a complex manifold which is a
fibration into complex tori, and its sheaf of holomorphic sections
$\mathcal{J}$ is described in complex
analytic terms as the quotient
\begin{eqnarray} \label{eqdeschol} \mathcal{J}=R^1\phi_*\mathcal{O}_{X^0}/R^1\phi_*\mathbb{Z}=R^1\phi_*\mathbb{C}\otimes\mathcal{O}_B/
(\mathcal{H}^{1,0}\oplus R^1\phi_*\mathbb{Z})= \mathcal{H}^{0,1}/R^1\phi_*\mathbb{Z},
\end{eqnarray}
where $$\mathcal{H}^{1,0}=R^0\phi_*\Omega_{X^0/B^0},\,\,\, \mathcal{H}^{0,1}:=R^1\phi_*\mathcal{O}_{X^0}=(R^1\phi_*\mathbb{C}\otimes_{\mathbb{C}}\mathcal{O}_{B^0})/
\mathcal{H}^{1,0}.$$
Formula (\ref{eqdeschol}) describes $J$ as a holomorphic torus fibration but
 as a $\mathcal{C}^\omega$ real torus fibration,
 the right formula
 is
 \begin{eqnarray} \label{eqdesreal}\mathcal{J}_\mathbb{R}=\mathcal{H}^1_\mathbb{R}/
R^1\phi_*\mathbb{Z},\,\,\,\mathcal{H}^1_\mathbb{R}:=R^1\phi_*\mathbb{R}\otimes\mathcal{C}^\omega_{B^0,\mathbb{R}}
\end{eqnarray}
which uses the natural fiberwise isomorphisms
\begin{eqnarray}\label{eqtrivreal} H^1(X_b,\mathbb{R})\cong H^1(X_b,\mathbb{C})/H^{1,0}(X_b)\cong H^1(X_b,\mathcal{O}_{X_b})
\end{eqnarray}
globalizing into the $\mathcal{C}^\omega_{B^0,\mathbb{R}}$ sheaf isomorphism
\begin{eqnarray}\label{eqglobiso4oct} \mathcal{H}^1_\mathbb{R}\cong R^1\phi_*\mathcal{O}_{X^0}\otimes_{\mathcal{O}_B} \mathcal{C}^\omega_{B^0,\mathbb{C}}\cong \mathcal{H}^{0,1}\otimes_{\mathcal{O}_B}\mathcal{C}^\omega_{B^0,\mathbb{C}}.\end{eqnarray}

Trivializing the locally constant sheaves
$R^1\phi_*\mathbb{Z},\,\,R^1\phi_*\mathbb{R}$ on simply connected open sets
$U\subset B$, (\ref{eqdesreal}) is the counterpart,  at the level
of sheaves of sections, of a local real analytic  trivialization
\begin{eqnarray} \label{eqtrivfibR} J_U=U\times (H^{1}(X_0,\mathbb{Z})\otimes \mathbb{R}/\mathbb{Z}),
\end{eqnarray}
where $0$ is a given point of $U$.
Let now $\nu$ be a holomorphic section of $J$ over $U$.
Using the trivialization (\ref{eqtrivfibR}), the section $\nu$ gives a differentiable (in fact real analytic)
map
$$f_\nu: U\rightarrow H^{1}(X_0,\mathbb{Z})\otimes \mathbb{R}/\mathbb{Z}=J(X_0),$$
(which is called the Betti map in \cite{zannier})
and we will use the following  easy criterion:

\begin{prop}\label{ledense} (i) For a  section
$\nu$ of a torus fibration with local associated map
$f_\nu$ as above, the points $x$ of $U$ where $\nu(x)$ is of torsion are dense in
$U$ for the usual topology  if  the differential $df_\nu$ is surjective at some point $b\in U$.

(ii) In the situation of an elliptic fibration over a $1$-dimensional base, the differential $df_\nu$ is surjective if and only if it is nonzero.
\end{prop}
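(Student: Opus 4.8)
The plan is to prove the two parts separately: part (i) by combining the submersion normal form with real-analyticity, and part (ii) by an explicit computation of the differential of the Betti map that uses the holomorphicity of $\nu$.

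For part (i), I would first record that the torsion points of the real torus $T:=H^1(X_0,\mathbb{Z})\otimes\mathbb{R}/\mathbb{Z}\cong(\mathbb{R}/\mathbb{Z})^{r}$ are exactly the points of $(\mathbb{Q}/\mathbb{Z})^{r}$, which are dense in $T$. At any point of $U$ where $df_\nu$ is surjective, $f_\nu$ is a submersion, so by the implicit function theorem it is, in suitable local $\mathcal{C}^\infty$ coordinates on source and target, a linear projection; hence the preimage of a dense subset of $T$ is dense in a neighbourhood of such a point, and in particular the torsion locus $\{x\in U:\ f_\nu(x)\ \text{is torsion}\}$ is dense there. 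It then remains to upgrade density near $b$ to density in all of $U$: since $\nu$ is holomorphic, $f_\nu$ is real-analytic, and $U$ is connected (we may take $U$ simply connected), so the locus where $df_\nu$ fails to be surjective is contained in the zero set of some $r\times r$ minor of the Jacobian of $f_\nu$ that is nonzero at $b$, hence is a proper real-analytic subset, in particular nowhere dense; therefore $df_\nu$ is surjective on a dense open subset of $U$, and the torsion locus, being dense in a neighbourhood of each point of that subset, is dense in $U$.

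For part (ii), now $\dim_{\mathbb{R}}U=2=\dim_{\mathbb{R}}T$, so surjectivity of $df_\nu$ at a point is the same as its having rank $2$, and the content is to exclude rank $1$. Fix $b\in U$, a local holomorphic coordinate $z$ centered at $b$, and a flat integral frame of $R^1\phi_*\mathbb{Z}$ over $U$; this identifies every $H^1(X_z,\mathbb{C})$ with a fixed space $V:=H^1(X_b,\mathbb{C})$ carrying the fixed real form $V_{\mathbb{R}}:=H^1(X_b,\mathbb{R})$, and $f_\nu$ takes values in $V_{\mathbb{R}}$ modulo the lattice. Let $\omega(z)\in V$ depend holomorphically on $z$ and span $H^{1,0}(X_z)$, and lift $\nu$ to a holomorphic map $\widehat{\nu}:U\to V$ with $\widehat{\nu}(z)\equiv\nu(z)\bmod\mathbb{C}\,\omega(z)$. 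Since $V_{\mathbb{R}}\to V/H^{1,0}(X_z)$ is an isomorphism for every $z$, there is a unique real-analytic $\lambda:U\to\mathbb{C}$ with $v(z):=\widehat{\nu}(z)+\lambda(z)\omega(z)\in V_{\mathbb{R}}$, and $f_\nu$ is $v$ followed by the covering $V_{\mathbb{R}}\to T$. Writing $dv_b(w)=\alpha w+\beta\bar w$ for $w\in T_bU\cong\mathbb{C}$, the holomorphicity of $\widehat{\nu}$ and of $\omega$ shows that the antiholomorphic part comes only from $\lambda$, so $\beta=\lambda_{\bar z}(b)\,\omega(b)\in\mathbb{C}\,\omega(b)=H^{1,0}(X_b)$; on the other hand $v$ taking values in $V_{\mathbb{R}}$ forces $\beta=\bar\alpha$ (compare $w$- and $\bar w$-components in $\overline{dv_b(w)}=dv_b(w)$), whence $\alpha\in\overline{H^{1,0}(X_b)}=H^{0,1}(X_b)$. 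Because $H^{0,1}(X_b)$ is one-dimensional and meets $V_{\mathbb{R}}$ only in $0$, the $\mathbb{R}$-linear map $H^{0,1}(X_b)\to V_{\mathbb{R}}$, $\xi\mapsto\xi+\bar\xi$, is an isomorphism; hence if $\alpha\neq 0$ then $w\mapsto\alpha w+\bar\alpha\bar w=\alpha w+\overline{\alpha w}$ has image all of $V_{\mathbb{R}}$ (as $\alpha$ spans the line $H^{0,1}(X_b)$), so $df_b$ is surjective, while if $\alpha=0$ then $\beta=0$ and $df_b=0$. This proves the equivalence.

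There is no deep obstacle here, the statement being elementary, but two points deserve care. In part (i) one must not stop at density near the single point $b$: it is the real-analyticity of $f_\nu$ that propagates density to all of $U$. In part (ii) the decisive structural observation is that, after trivializing by a flat integral frame, the antiholomorphic derivative of the Betti map is forced into the Hodge line $H^{1,0}(X_b)$, so that the reality constraint pins the holomorphic derivative into the conjugate line $H^{0,1}(X_b)$; it is precisely the one-dimensionality of $H^{1,0}$ of an elliptic curve that makes a single nonzero tangent direction span all of $H^{0,1}(X_b)$, hence all of $V_{\mathbb{R}}$, which is why the dichotomy between non-vanishing and surjectivity of $df_\nu$ is special to elliptic fibrations over a curve.
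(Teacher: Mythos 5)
Your proof is correct. Part (i) follows the paper's own route: real-analyticity of $f_\nu$ spreads surjectivity of the differential from one point to a dense subset of $U$, and near each such point the submersion property (openness of $f_\nu$) pulls back the dense set of torsion points of the target torus; your explicit appeal to a nonvanishing Jacobian minor is just a more concrete way of phrasing the paper's ``surjective on a dense set of points''. For part (ii) you take a genuinely different route. The paper proves (Lemma \ref{leker}) that the fibers of $f_\nu$ are countable locally finite unions of closed analytic subsets of $U$ --- by comparing the holomorphic lift $\eta$ of $\nu$ with the $(0,1)$-projections of flat real lifts of a fixed target value --- and deduces that $\ker df_{\nu,b}$ is a complex subspace of $T_{U,b}$, which in real dimension $2$ forces rank $0$ or $2$. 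You instead compute $df_{\nu,b}$ directly in a flat integral frame: writing $v=\widehat{\nu}+\lambda\omega$ and separating holomorphic from antiholomorphic parts, you show that the $\bar\partial$-part of $dv_b$ lies in $H^{1,0}(X_b)$ and the reality constraint then places the $\partial$-part in $H^{0,1}(X_b)$, so that under the identification $H^1(X_b,\mathbb{R})\cong H^{0,1}(X_b)$ the differential becomes multiplication by $\alpha\in H^{0,1}(X_b)$, i.e.\ a $\mathbb{C}$-linear map. This actually yields slightly more than the paper's lemma (complex-linearity of the whole differential, not just complex-linearity of its kernel), it works verbatim in higher fiber dimension, and it makes visible why the dichotomy is special to elliptic fibrations ($h^{1,0}=1$ forces a nonzero $\alpha$ to span the target); the paper's argument, on the other hand, buys the additional global statement that the fibers of $f_\nu$ are analytic subschemes, which is reused later (Lemma \ref{lefibrecomp}).
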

\begin{proof} (i)  Observe first that $f_\nu$ being real analytic, if its differential
is surjective at some point $b\in U$, it is surjective on a dense set of points of $U$, hence it suffices
to prove density near a point $b$ where the differential $df_{\nu,b}$ is surjective.
If the differential $df_{\nu,b}$ is surjective,  $f_\nu$ is an open map
in a neighborhood $U_{b}$ of $b$. As the torsion points are dense in $J(X_{b_0})$, their preimages
under $f_\nu$ are then dense in $U_{b}$.

(ii)  This is a consequence of  the following fact:

\begin{lemm} \label{leker} The kernel of the differential $df_{\nu,b}:T_{U,b}\rightarrow T_{J(X_0)}=H^1(X_0,\mathbb{R})$ is
a complex vector subspace of $T_{U,b}$.
\end{lemm}
\begin{proof} Let us prove more generally that the fibers of $f_\nu$ are analytic subschemes of
$U$.  Let  $\alpha\in J(X_0)=H^1(X_{0},\mathbb{R})/H^1(X_{0},\mathbb{Z})$ and let
$\tilde{\alpha}\in H^1(X_{0},\mathbb{R})$ be a lifting of
$\alpha$ (such a lift is defined up to the addition of an element
 $\beta$ of $H^1(X_{0},\mathbb{Z})$). The class $\tilde{\alpha}$ extends  as a constant section also denoted $\tilde{\alpha}$ of the sheaf
$R^1\phi_*\mathbb{R}$ on $U$, which induces a holomorphic section
$\tilde{\alpha}^{0,1}$ of the sheaf $\mathcal{H}^{0,1}=R^1\phi_*^0\mathcal{O}_{X^0}$ on $U$.
On the other hand, the holomorphic section
$\nu$ of $J$ lifts to a holomorphic section
$\eta$ of $\mathcal{H}^{0,1}$ and it is clear
that $$f_\nu^{-1}(\alpha)=\{b\in U,\,\tilde{\alpha}^{0,1}_b=\eta_b\,\,{\rm in}\,\,\mathcal{H}^{0,1}(X_b)/H^1(X_b,\mathbb{Z})\}.$$
It follows that $f_\nu^{-1}(\alpha)$ is the countable locally finite  union
of the closed analytic subsets defined as zero sets of the holomorphic sections
$$\tilde{\alpha}^{0,1}-\eta
-\beta^{0,1}\in \Gamma(U,\mathcal{H}^{0,1})$$
of the bundle $\mathcal{H}^{0,1}$,
over all sections $\beta$ of $ H^1_\mathbb{Z}$ on $U$.
\end{proof}
In the situation of (ii), the base $U$ and the fiber $J(X_0)$ are both of real dimension $2$ and
Lemma \ref{leker} implies that the kernel of $df_{\nu,b}$ is  of real dimension $0$ or $2$. So either
$df_{\nu,b}=0$ or $df_{\nu,b}$ is surjective.
 \end{proof}
\begin{proof}[Proof of Theorem \ref{theoelliptic}] Let $\phi:X^0\rightarrow B^0$ be our elliptic fibration and  $\nu$ a section of $J(X^0/B)$. Assume that the points of $B$ where
$\nu$ is a torsion point are not dense for the usual topology of $B$. Then by Proposition
\ref{ledense}, it follows that $df_\nu$ vanishes everywhere on any open set  $U$ of $B^0$ where it is defined.
Coming back to the sheaf theoretic language, this means equivalently that
 $\nu$, seen  as a section of  $\mathcal{H}^1_\mathbb{R}/H^1_\mathbb{Z}$ via the isomorphism
 (\ref{eqdesreal}),  is
locally constant, or that our section
$\nu\in \Gamma(B^0,\mathcal{H}^{0,1}/H^1_\mathbb{Z})$ comes from
a locally constant section $\tilde{\nu}_\mathbb{R}\in \Gamma(B^0,{H}^1_\mathbb{R}/H^1_\mathbb{Z})$.
We are thus in position to apply Manin's theorem on the kernel and conclude. For further use,
we give the complete argument here.
Note that,  by assumption, $\tilde{\nu}_\mathbb{R}$ is not of  torsion and thus, fixing a base point
$0\in B$, corresponds to
an element $\alpha_0\in H^1(X_{0},\mathbb{R})$ which is not rational but has  the property that
for any $\gamma\in \pi_1(B^0,0)$,
\begin{eqnarray}\label{eqpourmonoarg}\rho(\gamma)(\alpha_0)-\alpha_0\in H^1(X_{0},\mathbb{Z}),
\end{eqnarray}
where $\rho:\pi_1(B^0,0)\rightarrow {\rm Aut}\,H^1(X_{0},\mathbb{Z})$
denotes the monodromy representation of the smooth fibration $\phi: X^0\rightarrow B^0$.
We use the following easy lemma.
\begin{lemm}\label{letorsionmono} Let $\rho:\Gamma\rightarrow {\rm Aut}\,V_\mathbb{Q}$
be a finite dimensional rational representation of a group $\Gamma$.
Then if the invariant space $V_\mathbb{Q}^{inv}$ is trivial, the set
$$\{v\in V_\mathbb{R},\,\,
\rho(\gamma)(v)-v\in V_\mathbb{Q}{\rm \,\,\,
for \,\,any \,\,}\gamma\in \Gamma\,\,\}$$
is equal to $V_\mathbb{Q}$.
\end{lemm}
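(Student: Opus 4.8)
The plan is to prove that the set
$$S := \{v \in V_{\mathbb{R}} : \rho(\gamma)(v) - v \in V_{\mathbb{Q}} \text{ for all } \gamma \in \Gamma\}$$
equals $V_{\mathbb{Q}}$ under the hypothesis $V_{\mathbb{Q}}^{\mathrm{inv}} = 0$. The inclusion $V_{\mathbb{Q}} \subseteq S$ is immediate since $\rho$ is rational: for $v \in V_{\mathbb{Q}}$ and any $\gamma$, the vector $\rho(\gamma)(v) - v$ lies in $V_{\mathbb{Q}}$. So the content is the reverse inclusion $S \subseteq V_{\mathbb{Q}}$.

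First I would observe that $S$ is a $\mathbb{Q}$-vector subspace of $V_{\mathbb{R}}$ containing $V_{\mathbb{Q}}$: it is clearly closed under $\mathbb{Q}$-linear combinations because $\rho(\gamma)$ is $\mathbb{Q}$-linear (indeed $\mathbb{R}$-linear) and $V_{\mathbb{Q}}$ is a $\mathbb{Q}$-subspace. Moreover $S$ is $\rho(\Gamma)$-stable: if $v \in S$ then for $\gamma, \delta \in \Gamma$ one has $\rho(\gamma)(\rho(\delta)(v)) - \rho(\delta)(v) = \rho(\delta)\bigl(\rho(\delta^{-1}\gamma\delta)(v) - v\bigr) \in \rho(\delta)(V_{\mathbb{Q}}) = V_{\mathbb{Q}}$, using that $\rho(\delta)$ preserves $V_{\mathbb{Q}}$. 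Now consider the quotient $W := S/V_{\mathbb{Q}}$, a $\mathbb{Q}$-vector space carrying an induced $\Gamma$-action; by the defining property of $S$, this induced action is \emph{trivial} — that is precisely the statement $\rho(\gamma)(v) - v \in V_{\mathbb{Q}}$ for $v \in S$. Hence every element of $S$ is fixed modulo $V_{\mathbb{Q}}$.

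Next, the key point: I claim $S \cap V_{\mathbb{Q}}^{\mathrm{inv}}_{\mathbb{R}}$-type reasoning forces $W = 0$. The clean way is to pick a $\Gamma$-equivariant splitting. Since $\rho$ is a rational representation, choose a $\Gamma$-invariant $\mathbb{Q}$-subspace complement is not automatic, so instead I would argue as follows. Let $v \in S$ and write $\overline{v}$ for its class in $W$. Pick any finite set of generators, or more robustly: the subspace $S_{\mathbb{R}} := S \otimes_{\mathbb{Q}} \mathbb{R} \subseteq V_{\mathbb{R}}$ is $\mathbb{R}$-spanned by $S$, hence $\rho(\Gamma)$-stable, and its image under $v \mapsto \rho(\gamma)v - v$ lands in $V_{\mathbb{Q}} \otimes \mathbb{R} = V_{\mathbb{R}}$... this needs care. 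The cleanest argument: for $v \in S$, the map $\Gamma \to V_{\mathbb{Q}}$, $\gamma \mapsto \rho(\gamma)(v) - v$, is a $1$-cocycle valued in $V_{\mathbb{Q}}$ for the $\Gamma$-module $V_{\mathbb{Q}}$. Decompose $v = v_{\mathbb{Q}} + v'$ is not available without a basis, so instead tensor: $V_{\mathbb{R}} = V_{\mathbb{Q}} \otimes_{\mathbb{Q}} \mathbb{R}$, and $H^1(\Gamma, V_{\mathbb{R}}) = H^1(\Gamma, V_{\mathbb{Q}}) \otimes_{\mathbb{Q}} \mathbb{R}$ for the purpose at hand is not needed either. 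Let me instead take the most elementary route, which I expect to be what the author does: the average/orbit argument does not apply for infinite $\Gamma$, so the real mechanism must be that $V_{\mathbb{Q}}^{\mathrm{inv}} = 0$ together with complete reducibility is \emph{not} assumed — so the proof must use only that $\rho$ factors through something, or use the structure of $V_{\mathbb{Q}}$ as $\Gamma$-module directly: pick a surjection of $\Gamma$-modules onto each irreducible quotient; an element $v\in S$ maps into $S$ for the quotient, and one reduces to the case $V_{\mathbb{Q}}$ irreducible and nontrivial. The main obstacle, and the step I would spend the most care on, is exactly this reduction: showing that if $V_{\mathbb{Q}}$ has no trivial subquotient of a suitable kind then $S = V_{\mathbb{Q}}$, which I would handle by choosing a $\Gamma$-stable $\mathbb{Q}$-filtration of $V_{\mathbb{Q}}$ with irreducible graded pieces, proving the statement for irreducibles (where $V_{\mathbb{Q}}^{\mathrm{inv}} = 0$ means nontrivial, and then the cocycle $\gamma \mapsto \rho(\gamma)v - v$ being valued in a strictly smaller subspace while $v$ generates forces $v \in V_{\mathbb{Q}}$), and then devissaging up the filtration by induction on its length, at each stage using the inductive hypothesis on the subobject and the quotient to conclude.
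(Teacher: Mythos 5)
The paper itself states this as an ``easy lemma'' and supplies no proof, so there is nothing to compare against line by line; judged on its own terms, your proposal does not close. You correctly establish $V_\mathbb{Q}\subseteq S$ and that $S$ is a $\Gamma$-stable $\mathbb{Q}$-subspace, but the reverse inclusion --- the entire content of the lemma --- is never actually proved. The d\'evissage you finally commit to (filter $V_\mathbb{Q}$ with irreducible graded pieces, treat the irreducible case, induct on the length) founders on the fact that the hypothesis $V_\mathbb{Q}^{inv}=0$ is \emph{not} inherited by subquotients: a non-split extension $0\to A\to V_\mathbb{Q}\to \mathbb{Q}_{triv}\to 0$ with $A^{inv}=0$ has $V_\mathbb{Q}^{inv}=0$ but a trivial irreducible quotient. (Concretely, take $\Gamma$ free on $\gamma_1,\gamma_2$ acting on $\mathbb{Q}^2$ by $\gamma_1\mapsto{\rm diag}(2,1)$ and $\gamma_2\mapsto\left(\begin{smallmatrix}3&1\\0&1\end{smallmatrix}\right)$.) So the inductive step ``apply the statement to the quotient'' is unavailable precisely in the cases that carry the difficulty. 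Moreover the irreducible case itself is not argued: ``the cocycle being valued in a strictly smaller subspace while $v$ generates'' names no mechanism, since $V_\mathbb{Q}$ is not an $\mathbb{R}$-subspace of smaller dimension inside $V_\mathbb{R}$ but a $\mathbb{Q}$-structure.

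The frustrating part is that you name the correct arguments and then discard them. Either of the two ideas you abandon finishes the proof uniformly, with no filtration. (1) Let $W\subset\mathbb{R}$ be the $\mathbb{Q}$-span of $1$ and of the coordinates of $v$ in a basis of $V_\mathbb{Q}$; it is finite dimensional, so choose a $\mathbb{Q}$-linear complement $C$ of $\mathbb{Q}$ in $W$. Then $V_\mathbb{Q}\otimes_\mathbb{Q}W=V_\mathbb{Q}\oplus(V_\mathbb{Q}\otimes_\mathbb{Q}C)$ is a decomposition into $\Gamma$-stable $\mathbb{Q}$-subspaces (the action is on the first tensor factor only), and writing $v=v_0+v_1$ accordingly, the condition $\rho(\gamma)v-v\in V_\mathbb{Q}$ forces $\rho(\gamma)v_1=v_1$ for all $\gamma$, i.e.\ $v_1\in(V_\mathbb{Q}\otimes C)^{inv}=V_\mathbb{Q}^{inv}\otimes C=0$, so $v=v_0\in V_\mathbb{Q}$. (2) Alternatively, $\gamma\mapsto\rho(\gamma)v-v$ is a $1$-cocycle valued in $V_\mathbb{Q}$ whose image in $H^1(\Gamma,V_\mathbb{R})=H^1(\Gamma,V_\mathbb{Q})\otimes_\mathbb{Q}\mathbb{R}$ vanishes, being the coboundary of $v$; by injectivity of $H^1(\Gamma,V_\mathbb{Q})\to H^1(\Gamma,V_\mathbb{R})$ it is already a coboundary over $\mathbb{Q}$, say of $v_0\in V_\mathbb{Q}$, and then $v-v_0\in V_\mathbb{R}^{inv}=V_\mathbb{Q}^{inv}\otimes\mathbb{R}=0$. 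Either repair should replace the composition-series argument.
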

As the monodromy representation is rational, this lemma tells us that the
set of classes
$\alpha \in H^1(X_{0},\mathbb{R})$ satisfying property (\ref{eqpourmonoarg}) contains
a nonrational class if and only if the set
$$H^1(X_{0},\mathbb{Q})^{inv}:=\{\alpha\in  H^1(X_{0},\mathbb{Q}),\,\rho(\gamma)(\alpha)-\alpha=0,
\,\,\forall \gamma\in \pi_1(B^0,0)\}$$
of monodromy invariant elements is nontrivial.
By Deligne's invariant cycles theorem \cite{deligne},
it then follows from the existence of $\alpha_0$ that the restriction map
$$H^1(X,\mathbb{Q})\rightarrow H^1(X_{0},\mathbb{Q})$$
is nontrivial, hence surjective since this a morphism of Hodge structures of weight $1$ and the right hand side has dimension $2$.
The conclusion that $X$ is  rationally isogenous
to $J(X_{0})\times B$ is then immediate since $X$ is rationally isogenous
to a projective completion of the Jacobian fibration
$J(X^0/B^0)$ and the later is isogenous to $J(X_{0})\times B^0$ if the
restriction map $H^1(X,\mathbb{Q})\rightarrow H^1(X_{0},\mathbb{Q})$ is surjective.
\end{proof}
\subsection{Some examples with higher dimensional fiber dimension}
Looking at Theorem \ref{theoelliptic}, one may wonder
 what makes hard and wrong  a generalization to
  higher fiber dimension. Let us give some examples explaining the main difficulties
 encountered:
Consider more generally any complex torus fibration $\phi_A:A\rightarrow B$ with a section
$\nu_A$. The torus fibration is canonically isomorphic, as a real torus fibration, to the locally constant fibration
$H_{1,\mathbb{R},\phi_A}/H_{1,\mathbb{Z},\phi_A}$, where
$H_{1,\mathbb{R},\phi_A}:=(R^1\phi_{A*}\mathbb{R})^*$. A section $\nu_A$ thus admits
local liftings $\tilde{\nu}_{A,\mathbb{R}}$ which are $\mathcal{C}^\infty$ (in fact real analytic) sections of
the flat vector bundle associated with the local system $H_{1,\mathbb{R},\phi_A}$.
\begin{example}\label{ex05}  Assume that the local lifts $\tilde{\nu}_{A,\mathbb{R}}$
of the section $\nu_A$ are     constant
sections $\tilde{\nu}_{A,\mathbb{R}}\in \Gamma(H_{1,\mathbb{R},\phi_A})$. Then if $\tilde{\nu}_{A,\mathbb{R}}$ is not rational, that is, does not belong to $\Gamma(H_{1,\mathbb{Q},\phi_A})$, $\nu_A$ has no torsion point.
\end{example}
This case, which corresponds to the situation where the local maps $f_{\nu_A}$ of the previous section are constant, can be in general excluded by a monodromy argument. The following example is slightly more subtle:

\begin{example} \label{ex2} Assume that the torus  fibration
$\phi_A:A\rightarrow B$ is  a fibered product
$$A=A'\times_B A'',$$
where $\phi':A'\rightarrow B,\,\phi'':A''\rightarrow B$ are  torus fibrations.
Choose a section  $\nu_{A''}:B\rightarrow  A''$ of $\phi''$ which is as in Example \ref{ex05}, that is locally lifts  to a constant nonrational
section of $(R^1\phi''_*\mathbb{R})^*$. Then for any section $\nu_{A'}:B\rightarrow A'$ of $\phi'$,
$\nu_A=(\nu_{A'},\nu_{A''})$ does not have any torsion point.
\end{example}

Another slightly more involved situation where we can avoid torsion points is as follows: Recall from the introduction that if ${\rm dim}\,B< g:={\rm dim} \,A_b$
a ``generic '' section of the abelian scheme $\phi_A:A\rightarrow B$ avoids the torsion points. Of course, for a general family $A\rightarrow B$, there might be no non trivial section, but there are always multisections and
they can be chosen to pass through any point with arbitrary differential. By ``generic'' we mean generic section of a base-changed family $A'\rightarrow B'$.
The next example exhibiting nontransversality is as follows:
\begin{example} Choose $A_1\rightarrow B_1$ with ${\rm dim}\, B_1<g_1 :={\rm dim} \,A_{1,b}$ and  a section $\nu_1:B_1\rightarrow A_1$ with no torsion points. For any $A_2\rightarrow B_2$, such that ${\rm dim}\, B_1+{\rm dim}\, B_2=g_1+g_2$, and for any section
$\nu_2:B_2\rightarrow A_2$, the section $(\nu_1,\nu_2):B_1\times B_2\rightarrow A_1\times A_2$ of the product family $A_1\times A_2\rightarrow B_1\times B_2$ has no torsion point.
\end{example}

Let us conclude with  the following abstract situation where we do not have transversality of the normal function (or rather of its local real analytic representation as in Section \ref{secelliptic}, so we do not expect the normal function to have torsion points.
\begin{example} \label{exampleimprobable} Assume the abelian scheme $A\rightarrow B$ satisfies ${\rm dim}\,B=g={\rm dim}\,A_b$ and  has the following property: $B$ admits a  foliation  given by an algebraic distribution which is analytically integrable with holomorphic leaves   $\mathcal{F}_t$ of dimension
$d$, such that along the leaves, the
real variation of Hodge structures on $H_{1\mid\mathcal{F}_t} $ splits as
$H_{1,\mathbb{R}\mid\mathcal{F}_t}=L_{\mathbb{R},t}\oplus L'_{\mathbb{R},t}$, with ${\rm rank}\,L'_\mathbb{R}=2d'$, and $d'<d$. Then if furthermore, the normal function
$\nu:B\rightarrow A$ or rather its local real analytic lift
$\tilde{\nu}_{\mathbb{R}}$ decomposes along the leaves as
$\nu_{\mathbb{R}\mid \mathcal{F}_t}=\nu_{L,t}+\nu_{L',t}$ with $\nu_{L,t}$ locally constant, then $\nu_{\mathbb{R}}$ is never of maximal rank.
\end{example}
Indeed, the differential $d\nu_{\mathbb{R}}$ cannot be injective at any point since its restriction to the tangent space of the leaf  is equal to
$d\nu_{L',t}:T_{\mathcal{F}_{t},\mathbb{R}}\rightarrow L'_{\mathbb{R},t}$, the two spaces being of dimensions $2d>2d'$.

This situation seems to be improbable if the $L_{\mathbb{R},t}$ furthermore varies with $t$ (that is, does not come from a local system on $B$). However this is the only case that we could not exclude for normal sections associated with divisors on hyper-K\"ahler manifolds of dimension $10$.
Let $X\rightarrow B$ be a projective  hyper-K\"ahler manifold
of dimension $10$ equipped with a Lagrangian fibration with maximal variation, and let $M\in {\rm Pic}\,X$
be a divisor which is cohomologous to $0$ on fibers. The following result will be proved in Section \ref{secn5}:
\begin{prop}\label{prodim10} (i) If the torsion points of $\nu_M: B^0\rightarrow X^0$ are not dense in $B$ for the usual topology, a Zariski dense open set $B^0$ has a foliation with $3$-dimensional  leaves $\mathcal{F}_t$, and the variation of Hodge structure along the leaves
$$H_{1,\mathbb{R}\mid\mathcal{F}_t}=L_{\mathbb{R},t}\oplus L'_{\mathbb{R},t}$$
where ${\rm rank}\,L_{\mathbb{R},t}=6,\,{\rm rank}\,L'_{\mathbb{R},t}=4$.
 Furthermore, the real variation of Hodge structure on $L_{\mathbb{R},t}$ is constant along $\mathcal{F}_t$.

 (ii) Along each leaf $\mathcal{F}_t$, the normal function $\nu_M$ has the $L$-component $\tilde{\nu}_{M,\mathbb{R},L}$ of its real lift constant.
\end{prop}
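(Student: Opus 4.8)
The plan is to run, in the case $n=5$, the same infinitesimal analysis used to prove Theorem \ref{theomain} in dimensions $\leq 8$, and to show that the one configuration it fails to exclude is a Perazzo-type degeneracy of the cubic form of the fibration, which produces exactly the foliation described. Assume the torsion points of $\nu_M$ are not dense in $B$ for the usual topology. First one carries out the standard preliminary reductions on the abelian scheme $A=\mathrm{Pic}^0(X^0/B^0)$: base change so that the relevant multisection becomes a section, and quotient by a maximal locally trivial subfamily. As in the monodromy argument proving Theorem \ref{theoelliptic}, a nontrivial locally trivial subfamily would, through Deligne's invariant cycle theorem and Lemma \ref{letorsionmono} (Manin's theorem on the kernel), already force density of the torsion points, against our assumption; and the multiples of a general multisection can be arranged to be Zariski dense. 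Hence Theorem \ref{theoandre} applies and gives: for every $b\in B^0$ and every $\lambda\in H^{1,0}(A_b)$ the map $\overline{\nabla}_\lambda\colon T_{B,b}\to H^{0,1}(A_b)$ is not of maximal rank (equivalently, by Proposition \ref{ledense}, the Betti map $f_{\nu_M}$ is nowhere submersive).

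Next I would pass to the cubic form of the Lagrangian fibration, using the local analysis of Section \ref{subsec}: the fibre being Lagrangian yields, at a general $b$, a canonical isomorphism $T_{B,b}\cong H^0(X_b,\Omega^1_{X_b})=H^{1,0}(A_b)=:W_b$, while the canonical polarization (Proposition \ref{promatsu}) identifies $H^{0,1}(A_b)$ with $W_b^*$; under these identifications the infinitesimal variation of Hodge structure of the abelian fibration is a totally symmetric tensor, i.e.\ a cubic form $c_b\in\mathrm{Sym}^3 W_b^*$ with $\overline{\nabla}_\lambda=c_b(\lambda,\cdot,\cdot)$, and $\dim W_b=n=5$. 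The conclusion of the previous paragraph then reads $\det\bigl(c_b(\lambda,\cdot,\cdot)\bigr)\equiv 0$ on $W_b$, i.e.\ the cubic $c_b$ has identically vanishing Hessian — precisely the degenerate real Monge-Amp\`ere equation studied in Sections \ref{secpetitechose}--\ref{secMA}. Since five variables is the first dimension in which Hesse's claim fails, the classification of such cubics (Gordan-Noether/Perazzo, or directly from the analysis of the degenerate Monge-Amp\`ere equation) gives that, after a linear change of coordinates, $c_b$ is either (a) a cone, i.e.\ $c_b(\xi_b,\cdot,\cdot)=0$ for some $\xi_b\neq 0$, or (b) a Perazzo cubic: there is a decomposition $W_b=X_b\oplus Y_b$ with $\dim X_b=3$, $\dim Y_b=2$, such that $c_b(x,x',w)=0$ for all $x,x'\in X_b$, $w\in W_b$, the quadrics $c_b(x,\cdot,\cdot)|_{Y_b}$ ($x\in X_b$) spanning $\mathrm{Sym}^2 Y_b^*$. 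In case (a) the direction $\xi_b$ lies in the kernel of the differential of the period map, so the period map has rank $\leq n-1$ everywhere, contradicting the maximality of the variation; this is exactly why in dimensions $\leq 8$, where only the cone case occurs, the conclusion of Theorem \ref{theomain} holds. Hence case (b) holds at the general point of $B$.

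I would then extract the foliation. In case (b) the subspace $X_b$ is intrinsically attached to $c_b$ — for instance $X_b=\{w\in W_b: c_b(w,w,\cdot)=0\}$, using the spanning property of the quadrics — so that $b\mapsto X_b\subset T_{B,b}$ is an analytically defined distribution of rank $3$ over a Zariski dense open $B^0$. The key point is its \emph{integrability}, which one reads off from the flatness of the Gauss-Manin connection: the Griffiths commutation relations satisfied by the IVHS tensor $c_b$ and its derivatives, together with the symmetry imposed by the Lagrangian condition, force $[X_b,X_b]\subset X_b$. Granting this, the integral leaves $\mathcal{F}_t$ have dimension $\dim X_b=3$. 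Translating the Perazzo relation back into Hodge theory: along a leaf $\mathcal{F}_t$ set $L_{\mathbb{C}}=X_b\oplus\overline{X_b}$ and $L'_{\mathbb{C}}=Y_b\oplus\overline{Y_b}$; these subspaces of $H^1(A_b,\mathbb{C})$ are stable under complex conjugation, hence define real sub-bundles $L_{\mathbb{R}},L'_{\mathbb{R}}$ of ranks $6$ and $4$, and the identity $c_b(x,x',w)=0$ ($x,x'\in X_b$, $w\in W_b$) says exactly that the restriction to $T\mathcal{F}_t$ of the Gauss-Manin connection preserves these sub-bundles and kills the Hodge filtration of $L$, i.e.\ $H_{1,\mathbb{R}\mid\mathcal{F}_t}=L_{\mathbb{R},t}\oplus L'_{\mathbb{R},t}$ is a splitting into sub-variations with $L$ constant along $\mathcal{F}_t$. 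This is part (i), and the numerology ($\dim\mathcal{F}_t=3$, ranks $6$ and $4$) is forced, being the unique shape of a non-conical Perazzo cubic in five variables. The integrability of $X_b$ (and, with it, the fact that the splitting $W_b=X_b\oplus Y_b$ is canonical and varies holomorphically) is the step I expect to be the main obstacle.

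Finally, for (ii), I would return to the Betti map. Over a leaf $\mathcal{F}_t$ the local system $L_{\mathbb{R}}$ is constant, so as real tori $A_b\cong A_L\times A_{L',b}$ for $b\in\mathcal{F}_t$ with $A_L$ fixed; the $L$-component of $\nu_M$ is then a holomorphic map $\nu_{M,L}\colon\mathcal{F}_t\to A_L$, with real lift $\tilde{\nu}_{M,\mathbb{R},L}$. If $\tilde{\nu}_{M,\mathbb{R},L}$ were non-constant along some leaf, the resulting extra rank of $df_{\nu_M}$ in the leaf directions, combined with the variation of $A_{L',b}$ detected by the directions transverse to the foliation (and, if needed, with the transverse variation of $\nu_M$ itself), would make $f_{\nu_M}$ submersive at a general point, contradicting Proposition \ref{ledense} and the non-density of torsion points; hence $\tilde{\nu}_{M,\mathbb{R},L}$ is constant along every $\mathcal{F}_t$, which is (ii). Besides the integrability issue mentioned above, the other point requiring care is precisely this bookkeeping — verifying that a non-constant $L$-component of the section would genuinely restore submersivity of the Betti map.
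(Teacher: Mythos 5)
Your overall architecture matches the paper's: apply Theorem \ref{theoandre} to get that all partial derivatives of the Donagi--Markman cubic $g^{(3)}_b$ are degenerate, invoke the classification in five variables (Lossen/Gordan--Noether--Perazzo) to conclude that the non-cone case forces $g^{(3)}_b$ to be singular along a plane $P_b\subset\mathbb{P}(T_{B,b})$, and read the foliation and the splitting off the Perazzo structure. But the two steps you yourself flag as obstacles are genuine gaps, and in both cases the missing idea is concrete. For the foliation, integrability of $b\mapsto P_b$ is not obtained in the paper from ``Griffiths commutation relations''; it comes from a Taylor-expansion argument (Lemma \ref{ledecompaffine}): writing $g=q_b+g^{(3)}_b+g^{(4)}_b+\cdots$ and using $\frac{\partial g^{(3)}_b}{\partial z_i}=\pm\frac{\partial g^{(4)}_b}{\partial X_i}$, one shows inductively that \emph{every} homogeneous piece $g^{(k)}_b$, $k\geq 3$, is singular along $P_b$, hence $g=q_b+g'_b$ with $g'_b$ vanishing doubly along the affine $3$-space $S_b$. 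This shows at once that $b\mapsto P_b$ is constant along $S_b$ (integrability, with affine leaves in both the holomorphic coordinates and the real flat structure) and that the $g_i$ are affine on $S_b$. Your substitute for the constancy of the VHS on $L_{\mathbb{R},t}$ --- the first-order identity $c_b(x,x',\cdot)=0$ for $x,x'\in X_b$ --- only says $\overline{\nabla}$ vanishes on $X_b\times X_b$; local constancy of the real Hodge structure along the leaf needs the all-orders statement, i.e.\ exactly the affineness of the $f_i$ on $S_b$ supplied by the Taylor argument. Without it your part (i) is not proved.

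For part (ii) your rank-counting argument does not work as stated. Non-constancy of $\tilde{\nu}_{M,\mathbb{R},L}$ along a leaf gives only that $d f_{\nu_M}$ composed with projection to $L_{\mathbb{R}}$ has rank $\geq 2$ on the $6$-dimensional leaf directions (the kernel being complex by Lemma \ref{leker}); nothing forces rank $6$ there, nor rank $4$ for the $L'$-component on the transverse directions, so you cannot conclude submersivity of the full $10\times 10$ differential and derive a contradiction with Proposition \ref{ledense}. The paper instead needs the \emph{strengthened} form of the Andr\'e--Corvaja--Zannier theorem (Theorem \ref{theoandrevariant}, applied to the affine bundle $\mathcal{H}_{1,0,\nu}$ rather than to $\mathcal{H}_{1,0}$): it yields that the quadric $f^{(2)}_b-\partial_{\lambda}\bigl(g^{(3)}_b\bigr)$ is degenerate for \emph{all} $\lambda$, where $df=\tilde{\nu}_{M,0,1}$ locally. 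One then analyzes the Perazzo normal form $X_1^2A+X_2^2B+X_1X_2C$: the general quadric $\partial_{\lambda}\bigl(g^{(3)}_b\bigr)$ has rank $4$ with vertex a general point of $P_b$, which forces the Hessian $f^{(2)}_b$ to vanish on $P_b$, i.e.\ $f$ is affine on $S_b$ and $d(f_{\mid S_b})$ is flat. That explicit quadric analysis, not a dimension count on the Betti map, is what proves (ii).
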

\section{The Andr\'e-Corvaja-Zannier theorem \label{secACZ}}
We describe in this section following \cite{ACZ} the proof of Theorem
\ref{theoandre}. The reason for doing so is not only the fact that this result is important and
 the arguments beautiful, but also the fact that their paper is written with notations
 that are not familiar to Hodge theoretists.

We first comment on the meaning of the conclusion of the theorem and the notation used there: consider the local system
$H_{1,\mathbb{Z}}=(R^1\pi_*\mathbb{Z})^*$ on $B$. It  generates  the flat holomorphic vector bundle
$\mathcal{H}_1:=H_{1,\mathbb{Z}}\otimes \mathcal{O}_B$, which carries the Hodge filtration
$$\mathcal{H}_{1,0}\subset \mathcal{H}_1$$
with quotient $\mathcal{H}_{0,1}$. The Griffiths $\overline{\nabla}$ map
is defined as the composite
$$\mathcal{H}_{1,0}\stackrel{\nabla}{\rightarrow} \mathcal{H}_1\otimes \Omega_B\rightarrow
\mathcal{H}_{0,1}\otimes \Omega_B,$$
where $\nabla$ is the Gauss-Manin connection.
It is $\mathcal{O}_B$-linear, hence induces, for each
$b\in B,\,\lambda\in \mathcal{H}_{1,0,b}$, a linear map:
$\overline{\nabla}_\lambda:T_{B,b}\rightarrow \mathcal{H}_{0,1,b}$.

 A crucial ingredient in the proof of Theorem \ref{theoandre} is the following result due to Andr\'e
 \cite{andremono}:
 In the situation of the theorem, let $\Gamma_0\subset
 \pi_1(B,b)$ be the subgroup acting trivially on $H^1(A_b,\mathbb{Z})$. On the cover
 $B_{\Gamma_0}\rightarrow B$ equipped with a point $\tilde{b}$ over $b\in B$, the base-changed fibration $A_{\Gamma_0}\rightarrow B_{\Gamma_0}$ has a natural globally defined real analytic trivialization $A_{\Gamma_0}\cong B\times H_1(A_{\tilde{b}},\mathbb{R})/H_1(A_{\tilde{b}},\mathbb{Z})$, so that the
 section $\nu$ becomes a well-defined real analytic map ${f}_\nu:B_{\Gamma_0}\rightarrow H_1(A_{\tilde{b}},\mathbb{R})/H_1(A_{\tilde{b}},\mathbb{Z})$.
 \begin{theo} \label{theoandremono} (Andr\'e \cite{andremono}) Under the assumptions (i) and (ii), the image of the  monodromy
 $$\Gamma_0\rightarrow H_1(A_{\tilde{b}},\mathbb{Z})$$
 of
  ${f}_\nu$ is Zariski dense in $H_1(A_{\tilde{b}},\mathbb{C})$.
  \end{theo}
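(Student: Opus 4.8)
The plan is to restate the theorem in Hodge-theoretic terms and to reduce it to a statement about the unipotent part of the monodromy of the mixed variation attached to $\nu$. Write $V_\mathbb{Z}=H_1(A_{\tilde b},\mathbb{Z})$ and $V=V_\mathbb{Z}\otimes\mathbb{Q}$. The section $\nu$ determines an extension of local systems on $B$,
$$0\rightarrow V_\mathbb{Z}\rightarrow E_\mathbb{Z}\rightarrow \mathbb{Z}\rightarrow 0,$$
its class in $H^1(B,V_\mathbb{Z})$ being the topological invariant of $\nu$. Choosing a topological splitting over the fibre at $b$, the monodromy of $E_\mathbb{Z}$ has the block form $\gamma\mapsto\left(\begin{smallmatrix}\rho(\gamma)&\mu(\gamma)\\0&1\end{smallmatrix}\right)$, where $\rho$ is the monodromy on $V_\mathbb{Z}$ and $\mu$ is a $1$-cocycle. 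On $\Gamma_0=\ker\rho$ the cocycle $\mu$ is a group homomorphism $\mu\colon\Gamma_0\rightarrow V_\mathbb{Z}$, and unwinding the real-analytic trivialisation of $A_{\Gamma_0}$ one checks that this homomorphism is exactly the monodromy of $f_\nu$ appearing in the statement. Since the image of a homomorphism into $V_\mathbb{Z}$ is a subgroup, its Zariski closure in $H_1(A_{\tilde b},\mathbb{C})$ is the $\mathbb{C}$-linear span of $\mu(\Gamma_0)$, namely $N_\mathbb{C}$ with $N:=\langle\mu(\Gamma_0)\rangle_\mathbb{Q}\subseteq V$. Hence the theorem is equivalent to the assertion $N=V$.

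First I observe that $N$ is a sub-local system of $V$. Regarding $\mu(\Gamma_0)$ as the set of translations $\rho_E(\Gamma_0)$ inside the ambient group $\mathrm{GL}(V)\ltimes V$ in which the monodromy of $E$ lives, and using $\Gamma_0\triangleleft\pi_1(B)$, the Zariski closure $N$ of $\mu(\Gamma_0)$ is normalised by the Zariski closure $G$ of $\rho_E(\pi_1(B))$; as conjugation of translations by $(\rho,\ast)$ is multiplication by $\rho$, this says precisely that $N$ is stable under $\rho(\pi_1(B))$. By the semisimplicity of the category of polarisable $\mathbb{Q}$-VHS on $B$ we may split $V=N\oplus W$ with $W$ a sub-VHS, and by Poincaré reducibility in families $A$ is isogenous over $B$ to $A_N\times_B A_W$, with $\nu$ corresponding to $(\nu_N,\nu_W)$. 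Assume for contradiction $W\neq 0$. Pushing $E$ out along the projection $V\rightarrow W$ gives an extension $0\rightarrow W\rightarrow E_W\rightarrow\mathbb{Z}\rightarrow 0$ whose monodromy cocycle is the $W$-component $\bar\mu$ of $\mu$; since $\mu(\Gamma_0)\subseteq N$, we get $\bar\mu|_{\Gamma_0}=0$, so the whole monodromy of $E_W$ factors through $\Gamma_H:=\pi_1(B)/\Gamma_0$, which is just the image of the monodromy of the pure VHS $W$.

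The remaining task — and the heart of André's theorem — is to derive a contradiction with (i) or (ii). If $W$ carried a non-zero monodromy invariant, the theorem of the fixed part would make it a constant sub-VHS, hence (Poincaré reducibility again) a non-trivial locally constant abelian subscheme of $A_W\subseteq A$, contradicting (i); so $W^{\pi_1(B)}=0$. Under this vanishing, the fact that the topological invariant of $\nu_W$ dies on the cover $B_{\Gamma_0}$ — equivalently that the class of $\bar\mu$ is inflated from $H^1(\Gamma_H,W)$ — forces, via the Hodge-theoretic constraints of the theorem of the fixed part applied to the admissible graded-polarisable VMHS $E_W$ over the quasi-projective base $B$ (the absence of invariants in $W$ kills the only part of $\mathrm{Ext}^1_{\mathrm{VMHS}}(\mathbb{Z},W)$ that can support a non-torsion section), that $\nu_W$ is a torsion section. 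Then a non-zero multiple of $\nu$ takes values in the proper abelian subscheme $A_N\subsetneq A$, contradicting the Zariski density of $m\nu(B)$ in (ii). Hence $W=0$, i.e. $N=V$, proving the theorem. Equivalently one can run André's original argument: $G$ is a normal subgroup of the derived group of the generic Mumford--Tate group of $E$, whose unipotent radical is computed Hodge-theoretically and equals $V$ exactly under (i)+(ii); the delicate point there is again the passage from the generic Mumford--Tate group to $G$, that is, the theorem of the fixed part in the mixed setting. I expect this last step — extracting ``$\nu_W$ torsion or $W$ has invariants'' from the vanishing of the Betti monodromy over the infinite cover $B_{\Gamma_0}$ — to be the main obstacle, the rest of the argument being formal.
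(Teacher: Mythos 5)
First, a point of reference: the paper does not prove Theorem \ref{theoandremono} at all --- it is quoted from Andr\'e \cite{andremono} and used as a black box in the proof of Theorem \ref{theoandre} --- so your attempt has to be measured against Andr\'e's argument rather than against anything in the text. That said, your formal skeleton is correct and is the standard one: the identification of the Betti monodromy of $f_\nu$ with the restriction to $\Gamma_0$ of the extension cocycle $\mu$, the computation $\mu(\gamma\delta\gamma^{-1})=\rho(\gamma)\mu(\delta)$ for $\delta\in\Gamma_0$ showing that $N=\langle\mu(\Gamma_0)\rangle_\mathbb{Q}$ is monodromy-stable, the splitting $V=N\oplus W$ by semisimplicity and Poincar\'e reducibility, and the observation that ``$\nu_W$ torsion'' contradicts (ii) while ``$W$ has monodromy invariants'' contradicts (i) via Deligne's fixed part theorem.

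The step you yourself flag as the main obstacle is, however, a genuine gap, and it is not a technicality: it is the entire content of the theorem. From $\bar\mu|_{\Gamma_0}=0$ you only get that $E_W$ is a trivial local system on the cover $B_{\Gamma_0}$, which is not quasi-projective, so no form of the theorem of the fixed part applies there; over $B$ itself you only know that the class of $\bar\mu$ is inflated from $H^1(\pi_1(B)/\Gamma_0,W)$, a group which has no reason to vanish. The parenthetical assertion that ``the absence of invariants in $W$ kills the only part of $\mathrm{Ext}^1_{\mathrm{VMHS}}(\mathbb{Z},W)$ that can support a non-torsion section'' is a restatement of what must be proved, not an argument. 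Nor does the fallback to Andr\'e's normality theorem (algebraic monodromy normal in the derived Mumford--Tate group) close the gap by itself, because the Zariski closure of $\mu(\Gamma_0)$ can be strictly smaller than the unipotent radical $U_H=H\cap V_\mathbb{C}$ of the algebraic monodromy group $H$ of $E$ which that theorem controls: already for $\pi_1(B)$ free with $\rho$ injective, a cocycle $\mu$ whose class in $H^1(\pi_1(B),V)$ is nonzero forces $U_H=V_\mathbb{C}$ while $\mu(\Gamma_0)=\mu(\{1\})=0$. Ruling out this phenomenon for the admissible variation attached to a section of an abelian scheme satisfying (i) and (ii) is precisely the hard Hodge-theoretic input of \cite{andremono}; your proposal correctly reduces the theorem to that input but does not supply it.
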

\begin{proof}[Proof of Theorem \ref{theoandre}] We work on $B_{\Gamma_0}$. The real analytic map
${f}_\nu$ is constructed as follows:
The holomorphic section $\nu$ on $B$ is a section of the sheaf
$\mathcal{H}_{0,1}/H_{1,\mathbb{Z}}$. Choose a local lift $\nu_{0,1}\in\Gamma( \mathcal{H}_{0,1})$.
 Due to Hodge symmetry, the real analytic  flat vector bundle
$\mathcal{H}_{1,\mathbb{R}}$ is isomorphic to $\mathcal{H}_{0,1}$ as a real analytic vector bundle.
This way, the holomorphic section $\nu_{0,1}$ of $\mathcal{H}_{0,1}$ provides a real analytic section
$\tilde{\nu}_\mathbb{R}$ of $\mathcal{H}_{1,\mathbb{R}}$.
We need to describe more explicitly how $\tilde{\nu}_\mathbb{R}$ is deduced from
$\nu_{0,1}$. Choose a local holomorphic lift $\tilde{\nu}_{hol}$ of $\nu$ to
a holomorphic section of the bundle $\mathcal{H}_1$.
We use now the real analytic decomposition
\begin{eqnarray}\label{eqdecophodge} \mathcal{H}_{1,an}\cong \mathcal{H}_{1,0,an}\oplus \mathcal{H}_{0,1,an},\end{eqnarray}
with $\mathcal{H}_{0,1}=\overline{\mathcal{H}_{1,0}}$. Here the index ``an'' indicates that we consider the associated real analytic complex vector bundle. We can thus  write
$\tilde{\nu}_{hol}-\overline{\tilde{\nu}_{hol}}=\eta_1-\overline{\eta_1}$ for some real analytic section $\eta_1$
of $\mathcal{H}_{1,0}$.
Then we conclude that
\begin{eqnarray}\label{eqdecomp} \tilde{\nu}_{hol}-\eta_1=\overline{\tilde{\nu}_{hol}}-\overline{\eta_1}
\end{eqnarray}
is real and maps to ${\nu}_{0,1}$ via the quotient
map $ \mathcal{H}_{1,an}\rightarrow \mathcal{H}_{0,1,an}$. Thus $\tilde{\nu}_{hol}-\eta_1$ gives the desired
section $\tilde{\nu}_\mathbb{R}$ of $ \mathcal{H}_{1,\mathbb{R},an}$ (which is then made into a real analytic map
$f_\nu:B\rightarrow H_1(A_b,\mathbb{R})$ by local flat trivialization of $ \mathcal{H}_{1,\mathbb{R},an}$). Note that it is clearly independent of the choice of holomorphic lifting $\tilde{\nu}_{hol}$ of $\nu_{0,1}$.

Our assumption is that $f_\nu$ is nowhere of maximal rank. What is unpleasant about $f_\nu$ is its real analytic, as opposed to holomorphic, character.
The first step in the Andr\'e-Corvaja-Zannier proof is the separation of holomorphic and antiholomorphic variables so as to make this assumption into a holomorphic equation.
Let us work locally on $B\times \overline{B}$ (later on, we will rather consider
$B_{\Gamma_0}\times \overline{B_{\Gamma_0}}$). Here $\overline{B}$ is $B$ equipped with its conjugate
complex structure, for which the holomorphic functions are the complex conjugates of holomorphic functions on
$B$. Over $\overline{B}$ we have the holomorphic bundle $\overline{\mathcal{H}_{1,0}}$ and
this provides us with two holomorphic vector bundles
$$\mathcal{K}_1:=pr_1^*\mathcal{H}_{1,0},\,\,\mathcal{K}_2:=pr_2^*\overline{\mathcal{H}_{1,0}}$$
on $B\times\overline{B}$.
Near the diagonal of $B$, we have the flat holomorphic bundle $\mathcal{H}_1$ associated
to the local system $H_{1,\mathbb{C}}$ on $B$. The inclusions of holomorphic subbundles
$$\mathcal{H}_{1,0}\subset \mathcal{H}_1,\,\,{\rm resp.}\,\, \overline{\mathcal{H}_{1,0}}\subset \mathcal{H}_1$$
on $B$, resp. $\overline{B}$
gives  on $B\times \overline{B}$  two holomorphic vector subbundles
  $$\mathcal{K}_1\subset \mathcal{H}_1 ,\,\, \mathcal{K}_2 \subset \mathcal{H}_1,$$
  which restricted to the diagonal produce (\ref{eqdecophodge}). Thus
  we have
  \begin{eqnarray}\label{eqhodebxb} \mathcal{H}_1=\mathcal{K}_1\oplus \mathcal{K}_2
  \end{eqnarray}
near the diagonal of $B$ in $B\times\overline{B}$.
  We now produce a holomorphic version of $\tilde{\nu}_{\mathbb{R}}$ on $B\times \overline{B}$ near the diagonal as follows: starting from  local  lifts $\tilde{\nu}_{hol}$ of $\nu_{0,1}$ to a holomorphic section of $\mathcal{H}_1$ on $B$, we consider $$\tilde{\nu}_1:=pr_1^*\tilde{\nu}_{hol},\,\,\tilde{\nu}_2=pr_2^*\overline{\tilde{\nu}_{hol}}$$ as holomorphic
  sections
  of  $\mathcal{H}_1$ defined on $B\times\overline{B}$ near the diagonal
  and write
  $\tilde{\nu}_1-\tilde{\nu}_2=\eta_1-\eta_2$, where $\eta_1,\,\eta_2$ are holomorphic
  sections of $\mathcal{K}_1,\,\mathcal{K}_2$ respectively.
  We then consider the holomorphic section
  $\tilde{\nu}_1-\eta_1$ of $\mathcal{H}_1$ and observe that its restriction to the diagonal of $B$ is exactly our real lifting $\tilde{\nu}_\mathbb{R}$. Similarly, the  locally defined holomorphic map
  $F_\nu: B\times \overline{B}\rightarrow H^1(A_b,\mathbb{C})$ deduced from $\tilde{\nu}_1-\eta_1$ by locally trivializing the flat vector bundle $\mathcal{H}_1$,
  restricts to $f_\nu:B\rightarrow H^1(A_b,\mathbb{R})$
  on the diagonal $\Delta_B$.

  One easily shows (see \cite[Lemma 4.2.1]{ACZ}) that $F_\nu:B\times\overline{B}\rightarrow H^1(A_b,\mathbb{C})$ is  generically of maximal rank if and only if $f_\nu: B\rightarrow H^1(A_b,\mathbb{R})$ is generically of maximal rank. Thus the assumption of the theorem  is that $F_\nu$ is nowhere
  of maximal rank where it is defined, namely near the diagonal of $B$.

  It is important to observe that $\tilde{\nu}_1-\eta_1$ and $F_\nu$ in fact do not depend on the choice of lift $\tilde{\nu}$ of $\nu_{0,1}$. Indeed, if we add
  to $\tilde{\nu}$ a section $\lambda$ of $\mathcal{H}_{1,0}$ without changing  $\overline{\tilde{\nu}}$,
  then
  we have
  $\tilde{\nu}'_1=\tilde{\nu}_1+\lambda_1$, with $\lambda_1=pr_1^*\lambda$, and
  $\tilde{\nu}'_2=\tilde{\nu}_2$, so that $\tilde{\nu}'_1-\tilde{\nu}'_2=\tilde{\nu}_1-\tilde{\nu}_2+\lambda_1$ and $\eta'_1=\eta_1+\lambda_1$.
  Thus
  $\tilde{\nu}'_1-\eta'_1=\tilde{\nu}_1-\eta_1$. Similarly, if we change now $\overline{\tilde{\nu}}$
  by adding to it a section $\overline{\lambda}$ of $\mathcal{H}_{0,1}$,
  then we do not change $\tilde{\nu}'_1$ and we have $\tilde{\nu}'_2=\tilde{\nu}_2+\lambda_2$
  where $\lambda_2=pr_2^*\overline{\lambda}$. Then $\eta'_1=\eta_1$ and $\eta'_2=\eta_2+\lambda_2$, so finally $\tilde{\nu}'_1-\eta'_1=\tilde{\nu}_1-\eta_1$.

  In order to use  the monodromy Theorem \ref{theoandremono}, we need to make the above construction
  more global, and this can be done by working on
  $B':=B_{\Gamma_0}\times \overline{B_{\Gamma_0}}$. On $B_{\Gamma_0}$, the local system
  ${H}_1$ is trivial, and thus we have a global isomorphism
  $pr_1^*H_1\cong pr_2^*H_1=H$, so that $\mathcal{K}_1=pr_1^*\mathcal{H}_{1,0}$ and
  $\mathcal{K}_2=pr_2^*\overline{\mathcal{H}_{1,0}}$ are holomorphic subbundles
  of the trivial bundle $\mathcal{H}:=H\otimes\mathcal{O}_{B'}$.
  On a dense   Zariski open set $B'_0$ of $B'$, we have
  $\mathcal{H}\cong \mathcal{K}_1\oplus \mathcal{K}_2$.
  Furthermore $pr_1^*\mathcal{H}_{0,1}\cong \mathcal{H}/\mathcal{K}_1$.
   We thus get a multivalued holomorphic map on $B'_0$
  \begin{eqnarray}\label{eqnamefnu} F_\nu:B'_0\rightarrow H_1(A_{\tilde{b}},\mathbb{C})
  \end{eqnarray}
  which coincides with the previously defined map near the diagonal
  of $B_{\Gamma_0}$. The fact that $F_\nu$ is multivalued follows from the fact that
  it is well defined once the lifts $\nu_{0,1},\,\overline{\nu_{0,1}}$ are chosen, but it depends on these lifts.

We now use Theorem \ref{theoandremono} on  $B_{\Gamma_0}$. It says that the couple
$(\nu_{0,1},\overline{\nu_{0,1}})$ (or their lifts $\tilde{\nu},\,\overline{\tilde{\nu}}$) changes under monodromy along  $B'$ (or $B'_0$) by the addition of couples
$(u,v)\in H_1(A_{\tilde{b}},\mathbb{Z})^2$ with $u,\,v$ in a Zariski dense open
set of $H_1(A_{\tilde{b}},\mathbb{Z})^2\subset H_1(A_{\tilde{b}},\mathbb{C})^2$.
Write now
$$\tilde{\nu}_u=\tilde{\nu}+u,\,\overline{\tilde{\nu}}_v=\overline{\tilde{\nu}}+v.$$
Then, using (\ref{eqhodebxb}), we can write  $u-v=\lambda_1-\lambda_2$ on $B'_0$, where
$\lambda_1$ is  a holomorphic section of $\mathcal{K}_1$ and
$\lambda_2$ is  a holomorphic section of $\mathcal{K}_2$, and
we get
$$pr_1^*\tilde{\nu}_{u}-pr_2^*\overline{\tilde{\nu}}_v=
pr_1^*\tilde{\nu}_{0,1}-pr_2^*\overline{\tilde{\nu}_{0,1}}+u-v$$
$$=pr_1^*\tilde{\nu}_{0,1}-pr_2^*\overline{\tilde{\nu}_{0,1}}+\lambda_1-\lambda_2=\eta_1-\eta_2+
\lambda_1-\lambda_2,$$
so that $\tilde{\nu}_{1,u,v}=\tilde{\nu}_{1}+u$ and $\eta_{1,u,v}=\eta_1+\lambda_1$.
According to the recipe described above,
the map $F_\nu$  thus becomes
$F_\nu+u-\lambda_1$. A translation by $u$ does not change the differential of $F_\nu$, so we can do here $u=0$ and choose $v$ in a Zariski open set
of $H^1(A_{\tilde{b}},\mathbb{Z})$.
As the holomorphic map $F_\nu$ is nowhere of maximal rank,  we conclude that for any $v$ in a Zariski dense set of $H^1(A_{\tilde{b}},\mathbb{Z})\subset H^1(A_{\tilde{b}},\mathbb{C})$, the holomorphic map
$F_\nu-\lambda_1:B'_0\rightarrow H^1(A_{\tilde{b}},\mathbb{C})$ is nowhere of maximal rank, where $\lambda_1$ is the holomorphic section
of
$\mathcal{K}_1\subset \mathcal{H}_1$ defined by
$$v=\lambda_1-\lambda_2.$$
We  now apply the following easy lemma: for any
$\mu\in H^1(A_{\tilde{b}},\mathbb{C})$, we can write as before, using (\ref{eqhodebxb})
$$\mu=\mu_1+\mu_2$$
as sections of $\mathcal{H}$ on $B'_0$, with $\mu_i$   a holomorphic section of $\mathcal{K}_i$, $i=1,\,2$. We can see $\mu_1$ as a holomorphic map $B'_0\rightarrow H=H_1(A_{\tilde{b}},\mathbb{C})$.
\begin{lemm} \label{ledensepourandre} Fix $b\in B'_0$. Then the set of $\mu \in H_1(A_{b'},\mathbb{C})$
such that the holomorphic map $F_\nu-\mu_1$ is not of maximal rank at $b$ is
Zariski closed in $H_1(A_{b'},\mathbb{C})$.
\end{lemm}
\begin{proof} Indeed, the assigment
$\mu\mapsto \mu_1$ is $\mathbb{C}$-linear
in $\mu$, hence for fixed $b$, $\mu_1,\,d\mu_{1,b}$ depend $\mathbb{C}$-linearly on $\mu$, which concludes the proof.
\end{proof}
Lemma \ref{ledensepourandre} and the fact that $v$ above can be taken in a Zariski dense set of $H^1(A_{\tilde{b}},\mathbb{Z})\subset H^1(A_{\tilde{b}},\mathbb{C})$
allow now to conclude that
for any $b'\in B'$ and any $\mu\in  H_1(A_{b},\mathbb{C})$, the holomorphic map
$F_\nu-\mu_1$ is not of maximal rank at $b'$ so that
$\mu_1$ is not of maximal rank at $b'$.
The proof of Theorem \ref{theoandre} concludes now with the following lemma:
\begin{lemm} Assume that for any $b'\in B'_0$, and any  $\mu\in H_1(A_{b'},\mathbb{C})$,
the differential $d\mu_1:T_{B',b'}\rightarrow H_1(A_{b'},\mathbb{C})$
is not of maximal rank.
Then for any $b\in B$, and any
$\lambda\in H_{1,0}(A_b)$ the map
$\overline{\nabla}_\lambda:T_{B,b}\rightarrow H_{0,1,b}$ is not of maximal rank.
\end{lemm}
\begin{proof} The subbundle $\mathcal{K}_1\subset \mathcal{H}$, where $\mathcal{H}$ is trivial, has a variation
$\delta_1:\mathcal{K}_1\rightarrow \mathcal{K}_2\otimes\Omega_{B'}$ and similarly for
$\mathcal{K}_2$. Along $B\subset B\times\overline{B}$, $\delta_1=\overline{\nabla}$
and $\delta_2$ is its complex conjugate.
The differential $d\mu_1$ is computed as follows:
we have \begin{eqnarray}\label{eqdmuzero} d\mu=0=d\mu_1+d\mu_2, \end{eqnarray}with
$$d\mu_{1}=d_1\mu_1+\delta_1(\mu_1),\,d\mu_2=d_2\mu_2+\delta_2(\mu_2)$$
for some differentials $d_i\mu_i\in \Omega_{B'}\otimes \mathcal{K}_i$.
It follows from (\ref{eqdmuzero}) that $d_1\mu_1+\delta_2(\mu_2)=0,\,d_2\mu_2+\delta_1(\mu_1)=0$, so that
\begin{eqnarray}\label{eqpourdifffinale} d\mu_1=-\delta_2(\mu_2)+\delta_1(\mu_1).
\end{eqnarray}
Suppose there exist  $b\in B$  and
$\lambda\in H^{1,0}(A_b)$ such that the map
$\overline{\nabla}(\lambda):T_{B,b}\rightarrow H_{0,1,b}$ is  of maximal rank.
Let $\mu=\lambda+\overline{\lambda}$.
Equation (\ref{eqpourdifffinale}) at  $b\in B\subset B\times\overline{B}$
gives then
$ d\mu_1=-\overline{\overline{\nabla}(\lambda)}+\overline{\nabla}(\lambda)$
and this sum is the direct sum of the two isomorphisms
$$\overline{\nabla}(\lambda):T_{B}\cong H_{0,1}(A_b),\,-\overline{\overline{\nabla}(\lambda)}:T_{\overline{B}}\cong H_{1,0}(A_b),$$
hence it is an isomorphism
$$d\mu_1:T_{B',b}=T_{B}\oplus T_{\overline{B}}\cong H_1(A_b,\mathbb{C}),$$
contradicting our assumption.
\end{proof}
This concludes the proof of Theorem \ref{theoandre}.
\end{proof}
We finish this section by observing that the proof given above allows to
prove in fact a  statement slightly stronger than
Theorem \ref{theoandre}, namely the following variant, for which we introduce a notation:
associated to our normal function $\nu$, which is a section of the sheaf
$$\mathcal{J}=\mathcal{H}_{0,1}/H_{1,\mathbb{Z}}$$
and given a local lift of $\nu$ to a holomorphic section $\nu_{0,1}$
of $\mathcal{H}_{0,1}$, we get an affine subbundle $\mathcal{H}_{1,0,\nu}$ of
$\mathcal{H}_1=H_{1,\mathbb{Z}}\otimes \mathcal{O}_B$ defined as
\begin{eqnarray}
\label{eqh10nu}\mathcal{H}_{1,0,\nu}:=q^{-1}(\nu_{0,1}),
\end{eqnarray}
where $q:\mathcal{H}_1\rightarrow \mathcal{H}_{0,1}$ is the quotient map.
Composing the inclusion $\mathcal{H}_{1,0,\nu}\subset \mathcal{H}_1$ of the affine subbundle
$\mathcal{H}_{1,0,\nu}$ with a local flat trivialization
$\Phi:\mathcal{H}_1\rightarrow H_{1,b,\mathbb{C}}$, we get a holomorphic map
\begin{eqnarray}\label{eqPhinu}\Phi_\nu: \mathcal{H}_{1,0,\nu}\rightarrow H_{1,b,\mathbb{C}}.
\end{eqnarray}

\begin{theo}\label{theoandrevariant} Let $\pi: A\rightarrow B$ be a family of abelian varieties of dimension $d$ over $\mathbb{C}$, where $B$ is quasi-projective and ${\rm dim}\,B\geq d$.
Let $\nu$ be an algebraic section of $\pi$. Assume

 (i) The family has no locally trivial subfamily.

  (ii) The multiples $m \nu(B)$ are Zariski dense in $A$.

  Then if the associated real analytic map $f_\nu$  is nowhere of maximal rank, the map $\Phi_\nu$ is nowhere submersive.

\end{theo}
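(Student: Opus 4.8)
The plan is to rerun the proof of Theorem~\ref{theoandre} above, keeping the affine sub-bundle $\mathcal{H}_{1,0,\nu}$ in play instead of passing at the last step to the linearised maps $\mu_1$. Recall that, under the hypothesis that $f_\nu$ is nowhere of maximal rank, that proof shows --- working on $B'=B_{\Gamma_0}\times\overline{B_{\Gamma_0}}$, and combining Lemma~\ref{ledensepourandre} with the Zariski density of the monodromy translates --- that for \emph{every} $\mu\in H_1(A_{\tilde b},\mathbb{C})$ the (multivalued) holomorphic map $F_\nu-\mu_1\colon B'_0\to H_1(A_{\tilde b},\mathbb{C})$ is nowhere of maximal rank; I will deduce from this alone that $\Phi_\nu$ is nowhere submersive. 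The key is to reinterpret $F_\nu-\mu_1$. Set $\mathcal{K}_{1,\nu}:=pr_1^*\mathcal{H}_{1,0,\nu}$ and $\mathcal{K}_{2,\bar\nu}:=pr_2^*\overline{\mathcal{H}_{1,0,\nu}}$: these are holomorphic affine sub-bundles of $\mathcal{H}_1$ on $B'_0$ with underlying vector bundles $\mathcal{K}_1$ and $\mathcal{K}_2$, so that, by $\mathcal{H}_1=\mathcal{K}_1\oplus\mathcal{K}_2$ near the diagonal, any translates of $\mathcal{K}_{1,\nu}$ and $\mathcal{K}_{2,\bar\nu}$ meet fibrewise in a single point. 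Unwinding the construction of $F_\nu$ from the lift $\tilde\nu_{hol}$ in Section~\ref{secACZ} --- where $\tilde\nu_{hol}$ is a holomorphic section of $\mathcal{H}_{1,0,\nu}$ and $\eta_1,\eta_2$ are sections of $\mathcal{K}_1,\mathcal{K}_2$ --- one obtains $F_\nu=\mathcal{K}_{1,\nu}\cap\mathcal{K}_{2,\bar\nu}$ and, more generally, $F_\nu-\mu_1=\mathcal{K}_{1,\nu}\cap(\mathcal{K}_{2,\bar\nu}-\mu)$. In particular $F_\nu-\mu_1$ is both a holomorphic section $s_\mu$ of $\mathcal{K}_{1,\nu}=pr_1^*\mathcal{H}_{1,0,\nu}$, i.e.\ a holomorphic map $s_\mu\colon B'_0\to\mathcal{H}_{1,0,\nu}$ over $pr_1$ with $\Phi_\nu\circ s_\mu=F_\nu-\mu_1$, and (after adding $\mu$) a holomorphic section $\bar s_\mu$ of $\mathcal{K}_{2,\bar\nu}=pr_2^*\overline{\mathcal{H}_{1,0,\nu}}$, i.e.\ a holomorphic map $\bar s_\mu\colon B'_0\to\overline{\mathcal{H}_{1,0,\nu}}$ over $pr_2$ with $\overline{\Phi_\nu}\circ\bar s_\mu=F_\nu-\mu_1+\mu$, where $\overline{\Phi_\nu}$ is the conjugate of $\Phi_\nu$ (submersive at $\bar w$ precisely where $\Phi_\nu$ is submersive at $w$).

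The heart of the matter is then a linear-algebra computation at a point $z=(b_1,\bar b_2)$ of $B'_0$ near the diagonal, where $\mathcal{H}_1=\mathcal{K}_1\oplus\mathcal{K}_2$. Since $s_\mu$ is a section over $pr_1$, the image of its differential in the $pr_1$-directions is a complement, inside the tangent space of $\mathcal{H}_{1,0,\nu}$ at $s_\mu(z)$, of the vertical tangent space $\mathcal{K}_{1,z}$; and $d\Phi_\nu$ restricted to $\mathcal{K}_{1,z}$ is, via the flat trivialisation, the inclusion of $\mathcal{K}_{1,z}$ into $H_1(A_{\tilde b},\mathbb{C})$. Hence $\Phi_\nu$ is submersive at $s_\mu(z)$ if and only if the image of $\partial_{b_1}(F_\nu-\mu_1)$ together with $\mathcal{K}_{1,z}$ spans $H_1(A_{\tilde b},\mathbb{C})$. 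But, $b_2$ being fixed, $F_\nu-\mu_1$ stays in the fixed affine plane $\mathcal{K}_{2,\bar\nu,z}-\mu$, of direction $\mathcal{K}_{2,z}$, which is transverse to $\mathcal{K}_{1,z}$; so $\Phi_\nu$ is submersive at $s_\mu(z)$ $\iff$ $\partial_{b_1}(F_\nu-\mu_1)$ is onto $\mathcal{K}_{2,z}$ $\iff$ $\bar s_\mu$ is submersive at $z$. The mirror argument (for $b_1$ fixed, $F_\nu-\mu_1$ stays in $\mathcal{K}_{1,\nu,z}$, of direction $\mathcal{K}_{1,z}$) gives: $\overline{\Phi_\nu}$ is submersive at $\bar s_\mu(z)$ $\iff$ $\partial_{b_2}(F_\nu-\mu_1)$ is onto $\mathcal{K}_{1,z}$ $\iff$ $s_\mu$ is submersive at $z$. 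Finally $d(F_\nu-\mu_1)=\partial_{b_1}(F_\nu-\mu_1)\oplus\partial_{b_2}(F_\nu-\mu_1)$ takes values in $\mathcal{K}_{2,z}\oplus\mathcal{K}_{1,z}=H_1(A_{\tilde b},\mathbb{C})$, so these combine into the equivalence: $F_\nu-\mu_1$ has maximal rank at $z$ if and only if $\Phi_\nu$ is submersive at $s_\mu(z)$ and $\overline{\Phi_\nu}$ is submersive at $\bar s_\mu(z)$.

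With this, suppose for contradiction that $\Phi_\nu$ is submersive at some $v_0\in\mathcal{H}_{1,0,\nu}$ lying over a point $b_0$ which --- the statement being local and $B_{\Gamma_0}\to B$ étale --- we may take in $B_{\Gamma_0}$; then also $\overline{\Phi_\nu}$ is submersive at $\overline{v_0}$. Put $z_0=(b_0,\bar b_0)$ and $\mu_0=\overline{v_0}-v_0\in H_1(A_{\tilde b},\mathbb{C})$. At $z_0$ the fibre of $\mathcal{K}_{1,\nu}$ is the affine plane $\mathcal{H}_{1,0,\nu,b_0}$ and that of $\mathcal{K}_{2,\bar\nu}$ is its complex conjugate, the two being transverse; since $v_0\in\mathcal{H}_{1,0,\nu,b_0}$, $\overline{v_0}\in\overline{\mathcal{H}_{1,0,\nu,b_0}}$ and $v_0=\overline{v_0}-\mu_0$, transversality forces $s_{\mu_0}(z_0)=\mathcal{K}_{1,\nu,z_0}\cap(\mathcal{K}_{2,\bar\nu,z_0}-\mu_0)=v_0$ and $\bar s_{\mu_0}(z_0)=\mathcal{K}_{2,\bar\nu,z_0}\cap(\mathcal{K}_{1,\nu,z_0}+\mu_0)=\overline{v_0}$. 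Then $\Phi_\nu$ is submersive at $s_{\mu_0}(z_0)=v_0$ and $\overline{\Phi_\nu}$ at $\bar s_{\mu_0}(z_0)=\overline{v_0}$, so the equivalence of the previous paragraph makes $F_\nu-(\mu_0)_1$ of maximal rank at $z_0$ --- contradicting that it is nowhere of maximal rank. Hence $\Phi_\nu$ is nowhere submersive. The one genuinely new step, and the place where the care is needed, is the second paragraph: identifying $F_\nu-\mu_1$ with the affine intersection $\mathcal{K}_{1,\nu}\cap(\mathcal{K}_{2,\bar\nu}-\mu)$ and carrying out the transversality bookkeeping that resolves maximal rank of $F_\nu-\mu_1$ into the two submersivity statements for $\Phi_\nu$ and $\overline{\Phi_\nu}$; everything else is formal, given the proof of Theorem~\ref{theoandre} already supplied.
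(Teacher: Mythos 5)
Your proof is correct and follows the route the paper itself indicates: the paper offers no separate argument for Theorem \ref{theoandrevariant} beyond the assertion that ``the proof given above allows to prove'' it, i.e.\ one reruns the proof of Theorem \ref{theoandre} and stops before passing to the linear part, and that is precisely what you do. The one step the paper leaves entirely implicit --- converting the intermediate conclusion that $F_\nu-\mu_1$ is nowhere of maximal rank for every $\mu$ into the non-submersivity of $\Phi_\nu$ --- is supplied by your second and third paragraphs, and I have checked that the affine-intersection description $F_\nu-\mu_1=\mathcal{K}_{1,\nu}\cap(\mathcal{K}_{2,\bar\nu}-\mu)$, the resulting rank equivalence at points where $\mathcal{H}_1=\mathcal{K}_1\oplus\mathcal{K}_2$, and the choice $\mu_0=\overline{v_0}-v_0$ at a diagonal point (which makes the two submersivity conditions complex conjugates of one another) are all sound.
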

Note that this strengthens Theorem \ref{theoandre} since
the conclusion of Theorem \ref{theoandre} is the fact that the holomorphic map
$\Phi_{1,0}:=\Phi_{\mid \mathcal{H}_{1,0}}:\mathcal{H}_{1,0}\rightarrow H_{1,b,\mathbb{C}}$ is nowhere of maximal rank.
Choosing a holomorphic lift
$\tilde{\nu}$ of $\nu$ in $\mathcal{H}_1$,
the map $\Phi_\nu$ of (\ref{eqPhinu}) identifies to
$\Phi\circ \tilde{\nu}+\Phi_{1,0}:\mathcal{H}_{1,0}\rightarrow H_{1,b,\mathbb{C}}$ and it is clear, by passing to the linear part, that
if  $\Phi\circ \tilde{\nu} +\Phi_{0,1}$ is nowhere of maximal rank on $\mathcal{H}_{1,0}$, so is
$\Phi_{1,0}$.

\section{Lagrangian fibrations on hyper-K\"ahler manifolds; the non-isotrivial case of Theorem \ref{theomain}\label{sec2}}

\subsection{\label{subsec}Local structure results}
We  recall in this section, following
\cite{donagimarkaman} or \cite{syz}, the local data determining a holomorphic Lagrangian polarized torus fibration.
The holomorphic torus fibration $\phi:A\rightarrow U$ is determined by a weight $1$ (or rather $-1$) variation of Hodge structure, that is the data of
a local  system $H_{1,\mathbb{Z}}=(R^1\phi_*\mathbb{Z})^*$ and a holomorphic subbundle
$$\mathcal{H}_{1,0}\subset \mathcal{H}_1:=H_{1,\mathbb{Z}}\otimes \mathcal{O}_U$$
determining a weight $1$ Hodge structure at any point of $U$.
The sheaf $\mathcal{A}$ of holomorphic sections of $A$ identifies to
\begin{eqnarray}
\label{eqsheafsec} \mathcal{H}_{0,1}/H_{1,\mathbb{Z}},\,\,\mathcal{H}_{0,1}:=\mathcal{H}_1/\mathcal{H}_{1,0} .
\end{eqnarray}

The holomorphic $2$-form $\sigma_A$ on $A$ for which the fibration
$\phi:A\rightarrow U$ is a Lagrangian fibration provides by interior product an isomorphism of vector bundles
\begin{eqnarray}\label{eqisovbundles} T_U=R^0\phi_*(T_{A}/T_{A/U})\cong R^0\phi_*\Omega_{A/U},
\end{eqnarray}
which by dualization provides an isomorphism (which is canonical, given the choice of $\sigma_A$)
\begin{eqnarray}\label{eqisovbundlesdual} \mathcal{H}_{0,1}\cong \Omega_U.
\end{eqnarray}
Using the isomorphism of (\ref{eqisovbundlesdual}), the surjective map of holomorphic vector bundles
$$\mathcal{H}_1\rightarrow \mathcal{H}_{0,1}$$
is thus given, on any simply connected open set $U'$ where the flat vector bundle $\mathcal{H}_1$
is trivialized, by the evaluation morphism of  $2n$ holomorphic $1$-forms
$\alpha_i$ on $U'$, which must have the property
that their real parts ${\rm Re}\,\alpha_i$ form a basis of
$\Omega_{U',\mathbb{R}}$ at any point.
We have  (see \cite{donagimarkaman}):
\begin{lemm} \label{leformalphaiclosed} The forms $\alpha_i$ are closed on $U'$, hence we have, up to shrinking
our local open set $U'$ if necessary,  $\alpha_i=df_i$, where the $f_i$'s are holomorphic and defined up to a constant.
\end{lemm}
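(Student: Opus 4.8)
The plan is to reduce the statement to the standard local normal form for a holomorphic Lagrangian torus fibration: over the simply connected $U'$, the fibration $A\to U'$ is fiberwise the quotient of the total space of $\Omega_{U'}$ by a lattice of holomorphic $1$-forms, and the $\alpha_i$ are precisely the generators of that lattice. Their closedness is then forced by the requirement that the holomorphic symplectic form $\sigma_A$ descend from the cotangent bundle to the quotient.

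\emph{Step 1 (passing to the universal cover).} On $U'$ trivialize the local system $H_{1,\mathbb{Z}}$ by a flat integral frame $e_1,\dots,e_{2n}$. These span $\mathcal{H}_1$ over $\mathcal{O}_{U'}$, and their images in $\mathcal{H}_{0,1}$ generate, at each point $t$, the lattice $\Lambda_t\subset\mathcal{H}_{0,1,t}$ with $A_t=\mathcal{H}_{0,1,t}/\Lambda_t$ (cf. (\ref{eqsheafsec})). Hence the total space $E$ of the vector bundle $\mathcal{H}_{0,1}$ is the fiberwise universal cover of $A\to U'$, its deck group being translations by the $e_i$; and the $2n$ forms $\alpha_i$ are by construction the images of the $e_i$ under the isomorphism $\mathcal{H}_{0,1}\xrightarrow{\sim}\Omega_{U'}$ of (\ref{eqisovbundlesdual}), i.e. holomorphic $1$-forms on $U'$, viewed as holomorphic sections $U'\to E\cong T^*U'$.

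\emph{Step 2 (identifying $\sigma_A$).} Using (\ref{eqisovbundlesdual}) identify $E$ with $T^*U'$, equipped with its tautological $1$-form $\lambda$ and canonical holomorphic symplectic form $\omega_{\mathrm{can}}=d\lambda$. One checks directly, from the definition of the isomorphisms (\ref{eqisovbundles})--(\ref{eqisovbundlesdual}), that the pullback $p^*\sigma_A$ along $p:E\to A$ equals $\omega_{\mathrm{can}}$ up to a nonzero scalar: both forms vanish on vertical pairs (the Lagrangian condition), and for a vertical vector $w\in\mathcal{H}_{0,1,t}$ and a vector projecting to $X\in T_{U,t}$, the value $\sigma_A(w,\cdot)$ is, by the very definition of (\ref{eqisovbundles}), the pairing $\langle X,\cdot\rangle$ computing $\iota_w$ against (\ref{eqisovbundlesdual}), which is exactly what $d\lambda(w,\cdot)$ computes after the identification. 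This unwinding of the definitions is the computation carried out in \cite{donagimarkaman}, \cite{syz}.

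\emph{Step 3 (conclusion).} For a holomorphic section $s$ of $T^*U'\to U'$, i.e. a holomorphic $1$-form $\beta$, translation $T_s$ by $s$ satisfies $T_s^*\lambda=\lambda+\pi^*\beta$, hence $T_s^*\omega_{\mathrm{can}}=\omega_{\mathrm{can}}+\pi^*d\beta$. Since $\sigma_A$ is a globally defined form on $A$ and $p^*\sigma_A$ is a nonzero multiple of $\omega_{\mathrm{can}}$, the form $\omega_{\mathrm{can}}$ is invariant under all deck transformations of $p$, i.e. under the translations by $\alpha_1,\dots,\alpha_{2n}$; therefore $d\alpha_i=0$ for every $i$. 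Shrinking $U'$ to a contractible polydisk, the Poincaré lemma yields holomorphic primitives $\alpha_i=df_i$, unique up to an additive constant. The main obstacle is Step 2: pinning down the constants and signs so that $p^*\sigma_A$ is genuinely (a scalar multiple of) the canonical symplectic form, rather than merely agreeing with it up to an a priori unknown bundle automorphism.
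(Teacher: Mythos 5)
Your argument is correct, but it is genuinely different from the one in the paper. The paper identifies the form $\alpha_{\gamma_{0,1}}$ attached to a flat section $\gamma$ of $H_{1,\mathbb{C}}$ with the fiberwise integral $\phi_*(\gamma\cup\sigma_A)$ (integration of $\sigma_A$ over the circle classes representing $\gamma$); since fiber integration commutes with $d$ and $\sigma_A$ is closed, each $\alpha_i$ is closed, with no need to discuss the cotangent-bundle model at all. You instead pass to the fiberwise universal cover $E\cong T^*U'$ and deduce $d\alpha_i=0$ from the invariance of $p^*\sigma_A$ under the deck translations $T_{\alpha_i}$, which is the classical action--angle argument of \cite{donagimarkaman}, \cite{syz}; the paper only invokes that identification later, in the proof of Lemma \ref{leclosed}. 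Concerning the obstacle you flag in Step 2: you do not actually need $p^*\sigma_A$ to equal a scalar multiple of $\omega_{\mathrm{can}}$ on the nose (that would require the zero section to be Lagrangian, which is not automatic in the abstract setting of Section \ref{subsec}). It suffices that $\eta:=p^*\sigma_A-\omega_{\mathrm{can}}$ be basic, i.e.\ a pullback $\pi^*\beta$ from $U'$: the contraction of either form with a vertical vector $w$ is the horizontal $1$-form corresponding to $w$ under (\ref{eqisovbundlesdual}) --- that is precisely how (\ref{eqisovbundlesdual}) is defined --- so $\iota_w\eta=0$, and since both forms are closed, $L_w\eta=\iota_w d\eta+d\iota_w\eta=0$, whence $\eta=\pi^*\beta$. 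Your Step 3 is insensitive to adding a basic form, since $T_{\alpha_i}^*\pi^*\beta=\pi^*\beta$, so the conclusion $\pi^*d\alpha_i=0$ and hence $d\alpha_i=0$ stands. The trade-off: your route exhibits the affine structure and the canonical symplectic model explicitly (useful for the sequel), while the paper's fiber-integration argument is shorter and avoids any discussion of zero sections.
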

\begin{proof}
The proof will use  a different description of
the forms
$\alpha_i$. For any
locally constant section
$\gamma$ of $H_{1,\mathbb{C}}\cong H^{2g-1}_{\mathbb{C}}$, $g={\rm dim}\,A_b$, we get using the closed
$2$-form $\sigma_A$ a closed $1$-form
$\phi_{*}( \gamma\cup  \sigma_A)$ on $U'$ (when $\gamma$ is a section of  $H_{1,\mathbb{Z}}$,
$\gamma$ is a combination of classes $\gamma_i$ of oriented  circle bundles
$\Gamma_i\subset A$, and $\phi_{*}(\gamma\cup \sigma_A)$ is defined as
$(\phi_{\mid \Gamma_i})_*(\sigma_{A\mid \Gamma_i})$).
We conclude observing that
for any locally constant section
$\gamma$ of $H_{1,\mathbb{C}}$ on an open set $U'$ of $U$, with induced  section
${\gamma}_{0,1}$ of $\mathcal{H}_{0,1}$, providing a holomorphic form
$\alpha_{\gamma_{0,1}}$ via the isomorphism (\ref{eqisovbundlesdual}), we have
$$\alpha_{\gamma_{0,1}}=\phi_{*}( \gamma\cup \sigma_A)\,\,{\rm in}\,\,\Gamma(U',\Omega_{U'}),$$
thus proving that the forms $\alpha_i$ are closed.
\end{proof}
If we now choose the $\alpha_i$ to form a basis of
$H_{1,\mathbb{R}}$, the corresponding $2n$  holomorphic $1$-forms
on $U'$ have the properties that
at any point $b\in U'$, the forms $\alpha_{i,b}$ are independent over
$\mathbb{R}$. Another way to say it is that the functions ${\rm Re}\,f_i$ give
local real coordinates on $U'$.
\begin{rema}{\rm By definition, the functions ${\rm Re}\,f_i$, which are defined up to a
 constant and depend only on the choice of $\sigma_A$ and of local  basis
 $\alpha_i$ of $H_{1,\mathbb{R}}$, provide a local real analytic identification
\begin{eqnarray}
\label{eqlocalident} U\cong H^1(A_{b_0},\mathbb{R}).
\end{eqnarray}
Globally, they provide an affine flat structure on $U$.
}
\end{rema}

The last piece of information we need concerning the torus fibration $\phi:A\rightarrow U$  is the polarization $\omega$ on the fibers.
It is given by a monodromy invariant skew-symmetric  pairing
$\langle\,,\,\rangle$ on $H_{1,\mathbb{R}}$ (we do not need here the fact that the polarization is integral), which has to polarize the Hodge structure at any point, so that for any $b\in U$
\begin{eqnarray}\label{eqHR}\langle\,,\,\rangle=0\,\,{\rm on}\,\, \mathcal{H}_{1,0,b}\subset H_{1,\mathbb{C},b}\\
\nonumber
i\langle\alpha,\overline{\alpha}\rangle >0,\,\,\,\,\forall 0\not=\alpha\in\,\,
\mathcal{H}_{1,0,b}.
\end{eqnarray}
 This can be viewed as a nondegenerate skew-symmetric form $\omega\in \bigwedge^2H^1(A_{b_0},\mathbb{R})$ which
produces  via
the diffeomorphism (\ref{eqlocalident}) a closed $2$-form $\omega^*$ on the open set $U$ of the trivialization introduced above. The $2$-form $\omega^*$ is constant in the affine coordinates introduced above.
We have  the following lemma whose proof is a formal computation (see \cite{syz}):
\begin{lemm} \label{lekahler}
The Hodge-Riemann bilinear relations (\ref{eqHR}) satisfied by
$\langle\,,\,\rangle$ and the Hodge structure on $H_1(A_{b},\mathbb{R})$ at any point  $b\in U$ are equivalent to the fact that
$\omega^*$ is a K\"ahler form on $U$.
\end{lemm}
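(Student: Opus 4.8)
The statement to be proved is Lemma~\ref{lekahler}: the Hodge--Riemann bilinear relations \eqref{eqHR} for $\langle\,,\,\rangle$ with respect to the Hodge structure on $H_1(A_b,\mathbb{R})$ at every $b\in U$ are equivalent to the assertion that the translation-invariant $2$-form $\omega^*$ on $U$ (obtained from $\omega\in\bigwedge^2 H^1(A_{b_0},\mathbb{R})$ through the diffeomorphism \eqref{eqlocalident}) is a K\"ahler form. The approach is a direct pointwise computation: fix $b\in U$, and unwind all the identifications so that both conditions become statements about the same bilinear form on the same complex vector space. The point is that the real coordinates ${\rm Re}\,f_i$ identify $T_{U,b}\otimes\mathbb{R}$ with $H^1(A_{b_0},\mathbb{R})\cong H^1(A_b,\mathbb{R})$, while the holomorphic coordinates $f_i$ (or rather $d f_i=\alpha_i$, viewed via \eqref{eqisovbundlesdual}) identify the \emph{complex} tangent space $T_{U,b}$ with $\mathcal{H}_{0,1,b}$.

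\medskip

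\textbf{Step 1: identify the complex structure on $T_{U,b}$.} Via the isomorphism \eqref{eqisovbundlesdual}, $\mathcal{H}_{0,1}\cong\Omega_U$, so dually $T_{U,b}\cong \mathcal{H}_{0,1,b}^{\,\vee}\cong \mathcal{H}_{1,0,b}$ (using the pairing between $\mathcal{H}_{1,0}$ and $\mathcal{H}_{0,1}$ induced by $\langle\,,\,\rangle$, or simply the fact that $\mathcal{H}_{1,0}\hookrightarrow\mathcal{H}_1\twoheadrightarrow\mathcal{H}_{0,1}$ realizes $\mathcal{H}_{1,0,b}$ as a complement to a complex conjugate — more precisely one uses $H_{1,\mathbb{C},b}=\mathcal{H}_{1,0,b}\oplus\overline{\mathcal{H}_{1,0,b}}$ and the composite $T_{U,b}\cong\mathcal{H}_{0,1,b}\cong \overline{\mathcal{H}_{1,0,b}}$, depending on conventions). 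The real coordinates ${\rm Re}\,f_i$ give the identification of \emph{underlying real} spaces $T_{U,b,\mathbb{R}}\cong H^1(A_b,\mathbb{R})\cong H_{1,\mathbb{R},b}^{\,\vee}$ (using the polarization to identify $H^1$ with $H_1$), and the content of the previous remark is that under this identification the complex structure $J$ on $T_{U,b,\mathbb{R}}$ corresponds precisely to the complex structure on $H^1(A_b,\mathbb{R})$ defining the Hodge structure, i.e. the one whose $+i$-eigenspace is $\mathcal{H}_{1,0,b}$ (equivalently $H^{0,1}$). This is exactly the assertion that the $f_i$ are holomorphic while the ${\rm Re}\,f_i$ are the real coordinates, together with the compatibility of \eqref{eqisovbundlesdual} with the natural real structures.

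\medskip

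\textbf{Step 2: match the two bilinear forms.} By construction $\omega^*$ is the constant $2$-form on $U$ equal, at each point and under the identification ${\rm Re}\,f_i\colon T_{U,b,\mathbb{R}}\cong H^1(A_b,\mathbb{R})$, to $\omega\in\bigwedge^2 H^1(A_b,\mathbb{R})$, which in turn is (dual to) the skew form $\langle\,,\,\rangle$ on $H_{1,\mathbb{R}}$. So $\omega^*_b$, as a skew form on $T_{U,b,\mathbb{R}}$, \emph{is} $\langle\,,\,\rangle$ transported to $T_{U,b,\mathbb{R}}$. Now recall the general linear-algebra fact: given a real vector space $W$ with complex structure $J$ and a skew form $q$ on $W$, $q$ is the imaginary part of a positive Hermitian form (i.e. the associated $(1,1)$-form is a K\"ahler, i.e. positive, form) if and only if $q$ is $J$-invariant ($q(Jx,Jy)=q(x,y)$) and $q(x,Jx)>0$ for all $x\neq 0$. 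Apply this with $W=T_{U,b,\mathbb{R}}$, $J$ the complex structure from Step~1, $q=\omega^*_b$: $J$-invariance of $q$ translates, on $H_{1,\mathbb{C},b}=\mathcal{H}_{1,0,b}\oplus\overline{\mathcal{H}_{1,0,b}}$, into $\langle\,,\,\rangle$ vanishing on $\mathcal{H}_{1,0,b}$ (and on $\overline{\mathcal{H}_{1,0,b}}$, which follows by reality), i.e. the first line of \eqref{eqHR}; and positivity $q(x,Jx)>0$ translates, writing a real vector as $x=\alpha+\overline\alpha$ with $\alpha\in\mathcal{H}_{1,0,b}$ so that $Jx=i\alpha-i\overline\alpha$, into $i\langle\alpha,\overline\alpha\rangle>0$ for all $0\neq\alpha\in\mathcal{H}_{1,0,b}$, i.e. the second line of \eqref{eqHR}. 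Finally, since $\omega^*$ is constant (hence closed) in the affine coordinates, positivity of $\omega^*_b$ at every $b$ is the same as $\omega^*$ being a K\"ahler form on $U$. Both implications are therefore established simultaneously.

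\medskip

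\textbf{Main obstacle.} There is no serious analytic difficulty — closedness of $\omega^*$ is automatic from constancy in affine coordinates (Lemma~\ref{leformalphaiclosed}), and everything reduces to pointwise linear algebra. The only delicate point is bookkeeping in \textbf{Step 1}: correctly tracking the complex structure on $T_{U,b}$ through the chain of identifications \eqref{eqisovbundles}--\eqref{eqisovbundlesdual}--\eqref{eqlocalident} and checking that it coincides (not its conjugate) with the Hodge complex structure on $H^1(A_b,\mathbb{R})$, i.e.\ that the sign conventions in "holomorphic $=$ the $\alpha_i$, real part $=$ coordinates" are consistent with "$\mathcal{H}_{1,0}=F^1$". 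Once this compatibility is pinned down, the equivalence with the Hodge--Riemann relations is the textbook statement relating a polarization of a weight-$1$ Hodge structure to a positive $(1,1)$-form, and the proof is the formal computation referred to in \cite{syz}.
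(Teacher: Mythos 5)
Your argument is correct and is exactly the ``formal computation'' that the paper does not spell out but delegates to the reference \cite{syz}: identify the complex structure on $T_{U,b}$ with the Hodge complex structure on $H^1(A_b,\mathbb{R})$ via (\ref{eqisovbundlesdual}) and (\ref{eqlocalident}), then apply the standard pointwise equivalence between the Hodge--Riemann relations for a weight-one structure and $J$-invariance plus positivity of the skew form, with closedness free from constancy in the affine coordinates. The only care needed is the sign/eigenspace convention in your Step 1, which you correctly flag.
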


As a corollary of the above descrition, we get the following result due to Donagi and Markman \cite{donagimarkaman}:
\begin{theo}\label{theodonagi} (Donagi-Markman) Let $X\rightarrow B$ be a Lagrangian fibration. Then locally on $B_{reg}$, there exist a holomorphic function
$g$ and holomorphic coordinates $z_1,\ldots, z_n$ such that
the infinitesimal variation of Hodge structures
$T_{B,b}\rightarrow {\rm Hom}\,(H_{1,0,b},H_{0,1,b})$ at $b\in B_{reg}$ is induced by the cubic form
$g^{(3)}$ defined by the third partial derivatives of $g$, using the identifications
$T_{B,b}\cong H_{1,0,b}\cong H_{0,1,b}^*$.
\end{theo}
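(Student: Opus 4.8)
The plan is to combine the local normal form of subsection \ref{subsec} with the elementary fact that a holomorphic Lagrangian submanifold of $T^\ast\mathbb{C}^n$ transverse to the zero section is the graph of an exact holomorphic $1$-form; the ``prepotential'' so produced is the function $g$, and the infinitesimal variation of Hodge structures will come out as its third derivative. Concretely, set $V:=H_{1,\mathbb{C},b_0}$, a symplectic vector space for the constant (monodromy invariant) form $\langle\,,\,\rangle$ given by the polarization, and trivialize $\mathcal{H}_1\cong V\otimes\mathcal{O}_{U'}$ by a flat frame $(e_i)_{1\le i\le 2n}$ over a simply connected $U'\ni b_0$. By (\ref{eqisovbundlesdual}) the surjection $q\colon V\otimes\mathcal{O}_{U'}\to\Omega_{U'}$ sends $e_i$ to $\alpha_i$, and by Lemma \ref{leformalphaiclosed} the $\alpha_i$ are closed, so $\alpha_i=df_i$ and we obtain a holomorphic map $f=(f_i)\colon U'\to V^\ast$, well defined up to translation, whose differential $df_b$ equals $q_b^\ast$. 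Since $q_b$ is surjective with kernel $\mathcal{H}_{1,0,b}$, the map $df_b$ is injective with image the annihilator of $\mathcal{H}_{1,0,b}$; using $\langle\,,\,\rangle$ to identify $V^\ast\cong V$, that annihilator corresponds to $\mathcal{H}_{1,0,b}^{\perp}=\mathcal{H}_{1,0,b}$, the last equality because $\mathcal{H}_{1,0,b}$ is Lagrangian by the first Hodge--Riemann relation in (\ref{eqHR}). Thus $f$ is an immersion whose image is a Lagrangian submanifold of $V^\ast$ for the symplectic form induced by $\langle\,,\,\rangle$, and $df_b$ is exactly the canonical isomorphism $T_{U,b}\cong\mathcal{H}_{1,0,b}$ induced by $\sigma_A$.

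Next I would choose the flat frame so that in addition $(e_i)$ is a symplectic basis of $V$ with $V_1:=\langle e_1,\dots,e_n\rangle$ a Lagrangian complement of $\mathcal{H}_{1,0,b_0}$; put $V_2:=\langle e_{n+1},\dots,e_{2n}\rangle$. Shrinking $U'$ around $b_0$, the functions $z_i:=f_i$ ($1\le i\le n$) are holomorphic coordinates on $U'$: the projection of $f(U')$ onto the ``action'' factor $\langle e_1^\ast,\dots,e_n^\ast\rangle$ is a local biholomorphism because $\mathcal{H}_{1,0,b}\cap V_1=0$ for $b$ near $b_0$. Being Lagrangian and a graph over these coordinates, $f(U')$ is the graph of an exact holomorphic $1$-form, so there is a holomorphic $g$ on $U'$ with $f_{n+j}=\partial g/\partial z_j$ (up to the sign fixed by $\langle\,,\,\rangle$) --- the Donagi--Markman prepotential. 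In the flat frame we then have $q_b(e_i)=dz_i|_b$ and $q_b(e_{n+j})=\sum_k g_{jk}(b)\,dz_k|_b$ with $g_{jk}:=\partial^2g/\partial z_j\partial z_k$, so $\mathcal{H}_{1,0,b}=\ker q_b$ is precisely the graph of the symmetric map $-{\rm Hess}_b(g)\colon V_2\to V_1$.

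It remains to differentiate this family of subspaces with the flat Gauss--Manin connection and project onto $V/\mathcal{H}_{1,0,b_0}\cong\mathcal{H}_{0,1,b_0}$ (using $V=V_1\oplus\mathcal{H}_{1,0,b_0}$): the variation $\overline{\nabla}_{\partial_{z_l}}$ of $\mathcal{H}_{1,0}$ is then given by the matrix $(g_{jkl}(b_0))$, $g_{jkl}:=\partial^3g/\partial z_j\partial z_k\partial z_l$, of third partials. Unwinding the identifications $T_{B,b}\cong\mathcal{H}_{1,0,b}$ (from $df_b$, i.e.\ from $\sigma_A$) and $\mathcal{H}_{0,1,b}\cong\mathcal{H}_{1,0,b}^\ast$ (from the polarization), which in the basis $(e_i)$ all reduce to the standard identification carrying $\partial_{z_i}$ to the $i$-th coordinate vector of $V_1\cong\mathbb{C}^n$, one finds (up to a harmless overall sign) $\langle\overline{\nabla}_{\partial_{z_l}}(\partial_{z_i}),\partial_{z_m}\rangle=\partial^3g/\partial z_i\partial z_l\partial z_m(b_0)$; that is, the infinitesimal variation of Hodge structures is induced by the cubic form $g^{(3)}$, as asserted. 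The one delicate point --- everything else being the quoted elementary Lagrangian fact together with two applications of the Poincar\'e lemma --- is exactly this last bookkeeping: checking that the three canonical isomorphisms of the statement are mutually compatible with the coordinate identification attached to the action basis $(e_i)$, so that no asymmetric change of frame intervenes and the resulting $3$-tensor is genuinely the totally symmetric $g^{(3)}$ rather than a twist of it. This sign-and-index verification is the substance of the lemma of \cite{donagimarkaman}.
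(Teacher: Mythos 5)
Your proposal is correct and follows essentially the same route as the paper: the same choice of a flat isotropic frame $e_1,\dots,e_n$, the coordinates $z_i=f_i$, the potential $g$ with $f_{n+j}=\partial g/\partial z_j$, and the computation of $\overline{\nabla}$ by differentiating the generators of $\mathcal{H}_{1,0}$. The only cosmetic difference is that you obtain $g$ by invoking ``a Lagrangian graph in $T^*\mathbb{C}^n$ is the graph of an exact $1$-form,'' whereas the paper verifies directly that the isotropy of $\mathcal{H}_{1,0}=\ker(ev)$ forces $g_{ij}=g_{ji}$ and then integrates --- the same argument in different words.
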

\begin{proof} Indeed, let $f_i$ be the local holomorphic functions appearing above. The indices correspond to a
local flat  basis $e_1,\ldots, e_{2n}$
of $H_{1,\mathbb{Z}}$. We can assume that
the basis is chosen so that $\langle e_1,\ldots,e_n\rangle$ is totally isotropic for the pairing giving the polarization.
We choose for coordinates $z_1=f_1,\ldots,z_n=f_n$.
We have
$$df_{n+i}=\sum_{j=1}^ng_{ij} dz_j,$$
with $g_{ij}=\frac{\partial f_{n+i}}{\partial z_j}$,
so that the kernel of the evaluation map
$$\mathbb{C}^{2n}\otimes \mathcal{O}_B\rightarrow \Omega_B$$
is generated by
$e_{n+i}-\sum_{j=1}^ng_{ij} e_j$ for $j=1,\ldots,n$.
As the subspace generated by these elements is totally isotropic for the pairing, we get
that $g_{ij}=g_{ji}$. Thus we have $\frac{\partial f_{n+i}}{\partial z_j}=\frac{\partial f_{n+j}}{\partial z_i}$ for any
$i,\,j$, and
this implies that locally there is a holomorphic function $g$ such that
$f_{n+i}=\frac{\partial g}{\partial z_i}$.

We finally compute the infinitesimal variation of Hodge structures by differentiating the generators
$e'_i:=e_{n+i}-\sum_{j=1}^ng_{ij} e_j$ of $\mathcal{H}_{1,0}$: we get
$$ \frac{\partial e'_i}{\partial z_k}=-\sum_{j}\frac{\partial g_{ij}} {\partial z_k} e_j=
-\sum_{j}\frac{\partial^3 g} {\partial z_i\partial z_j\partial z_k} e_j.$$
This proves the claim as the basis $e'_i,\,i\geq n+1$ is dual to the basis $e_i$ for $i\leq n$, which itself corresponds to
the basis $\frac{\partial}{\partial z_i}$ of $T_{B,b}$.
\end{proof}
\subsection{Proof of Theorem \ref{theomain} when ${\rm dim}\,X\leq 8$ and the variation is maximal}
Let $X$ be a very general hyper-K\"ahler manifold with Lagrangian fibration
$\pi: X\rightarrow B$ and $M\in{\rm Pic}\,X$ a divisor  topologically trivial on fibers of $\pi$. Assume $\pi$ is not locally isotrivial.
Let us prove that the assumptions of Theorem \ref{theoandre} are satisfied. This follows from
\begin{lemm} \label{lenovhs} Let $\phi: X\rightarrow B$ be a
 Lagrangian fibration on a hyper-K\"ahler manifold.
 Then there is no proper nontrivial real  subvariation
 of Hodge structure of $H_{1,\mathbb{R}}$ on $B^0$.
 \end{lemm}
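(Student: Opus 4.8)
The plan is to exploit the fact that, for a hyper-Kähler manifold $X$, the weight-$1$ variation of Hodge structure $H_{1,\mathbb{R}}$ on $B^0$ is not an arbitrary abelian-scheme VHS but is tied to the weight-$2$ Hodge structure on $H^2(X,\mathbb{Q})$, which for a very general $X$ has large (in fact maximal, by Zarhin-type arguments) Mumford–Tate group. First I would recall the relation, established by Matsushita and used earlier in the paper (Proposition \ref{promatsu} and the surrounding discussion), between the monodromy local system $R^1\phi_*\mathbb{Q}$ on the fibers of a Lagrangian fibration and the cohomology of $X$: the cup-product with $c_1(L)^n$, or equivalently the decomposition theorem applied to $\phi$, controls how the cohomology of the fiber sits inside the cohomology of $X$. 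The upshot I want is that the monodromy group of $H_{1,\mathbb{R}}$ on $B^0$, or at least its Zariski closure, is forced to be the full symplectic (or full special orthogonal, after accounting for the polarization) group — so that the only invariant subspaces are $0$ and everything. Concretely, a proper nontrivial sub-VHS $H'\subset H_{1,\mathbb{R}}$ would be monodromy-invariant, hence by semisimplicity of the monodromy representation one gets a splitting $H_{1,\mathbb{R}}=H'\oplus H''$ of local systems, both underlying sub-VHS.

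Next I would push this splitting up to $X$. By Deligne's global invariant cycles theorem (invoked already in the proof of Theorem \ref{theoelliptic}) together with the description of $H^*(X_b)$ as $\bigwedge^* H^1(X_b)$ (the fibers being abelian varieties), a monodromy-invariant decomposition of $H^1(X_b,\mathbb{Q})$ produces a nontrivial direct-sum decomposition, respecting Hodge structures, of the image of $H^*(X,\mathbb{Q})$ in $H^*(X_b,\mathbb{Q})$ — in particular of the subspace $H^2(X_b)^{\mathrm{inv}}$ and, tracing through Matsushita's results, of a piece of $H^2(X,\mathbb{Q})$ itself. But for $X$ very general with fixed Picard lattice, $H^2(X,\mathbb{Q})_{tr}$ is an irreducible Hodge structure (its Mumford–Tate group acts irreducibly), so it admits no nontrivial Hodge sub-structure; this is the contradiction. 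The key algebraic input is that $\bigwedge^2 H^1(X_b,\mathbb{Q})$ contains $H^2(X,\mathbb{Q})_{tr}$ as a sub-Hodge-structure (via $\phi^*$ composed with restriction, up to the isogeny identifications), so a splitting of $H^1$ induces a splitting of a space containing the transcendental part, forcing one summand to vanish.

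The main obstacle I expect is making the last step genuinely rigorous without circularity: the statement is asserted for an arbitrary Lagrangian fibration on an arbitrary hyper-Kähler $X$, not just a very general one, so I cannot simply invoke irreducibility of $H^2_{tr}$. The honest route is probably more local and representation-theoretic: use Donagi–Markman's Theorem \ref{theodonagi}, which says the IVHS of $H_{1,\mathbb{R}}$ is given by a cubic form $g^{(3)}$ under the identifications $T_{B,b}\cong H_{1,0,b}\cong H_{0,1,b}^*$, and show directly that the presence of a sub-VHS $H'=H'_{1,0}\oplus H'_{0,1}$ forces the cubic $g^{(3)}$ to be block-diagonal in the corresponding splitting of $H_{1,0}$ — since $\overline{\nabla}$ must preserve $H'$, i.e. $g^{(3)}(T_B, H'_{1,0})\subset H'_{0,1}$ and symmetrically. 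Combined with the fact (from the Lagrangian/symplectic structure, Lemma \ref{lekahler}) that $T_{B}$ is identified with $H_{1,0}$ compatibly, a global such splitting of the affine-flat structure on $B^0$ together with the hyper-Kähler constraint that $B$ (when smooth) is $\mathbb{P}^n$ with its standard structure — or more robustly, an argument that the resulting reduction of the monodromy contradicts the simplicity of $H^2(X,\mathbb{Q})_{tr}$ after passing to the universal deformation as in Section \ref{sec1} — yields the contradiction. I would present the deformation-theoretic version as the clean proof and remark that the direct IVHS argument gives an alternative.
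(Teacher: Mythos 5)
There is a genuine gap, and it sits in the step you yourself flag as the crux: transporting a splitting of $H^1(X_b,\mathbb{Q})$ to a splitting of $H^2(X,\mathbb{Q})_{tr}$. Your ``key algebraic input'' --- that $H^2(X,\mathbb{Q})_{tr}$ sits inside $\bigwedge^2H^1(X_b,\mathbb{Q})$ as a sub-Hodge structure via $\phi^*$ composed with restriction --- is false. By Matsushita's Proposition \ref{promatsu} the restriction map $H^2(X,\mathbb{Q})\rightarrow H^2(X_b,\mathbb{Q})$ has rank $1$, with image spanned by the polarization class; since the fibers are Lagrangian, the whole transcendental part restricts to $0$ on $X_b$. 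So no information about $H^2(X,\mathbb{Q})_{tr}$ flows in that direction, and the contradiction with irreducibility of $H^2_{tr}$ never materializes. Moreover, that irreducibility is only available for very general $X$, whereas the lemma is asserted for an arbitrary Lagrangian fibration; you acknowledge this circularity, but your proposed repairs are not carried out and would not close the gap on their own: a block-diagonal Donagi--Markman cubic is not by itself contradictory (it is exactly what happens for a fibered product of abelian schemes, as in Example \ref{ex2}), so some global input from $X$ is still needed to exclude it.

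The paper's actual argument uses Proposition \ref{promatsu} and Deligne's invariant cycle theorem in the opposite, and much more economical, way: these two facts together say that the space of global (monodromy-invariant) sections of $R^2\phi_*\mathbb{R}$ is exactly one-dimensional, spanned by the polarization $\omega$. Now if $L\subset H_{1,\mathbb{R}}$ is a nontrivial real sub-VHS, the second Hodge--Riemann relation forces $\omega$ to be nondegenerate on $L$, giving a flat orthogonal decomposition $H_{1,\mathbb{R}}=L\oplus L^{\perp\omega}$; if both summands are nonzero, the two forms $\pi_L^*(\omega_{\mid L})$ and $\pi_{L^\perp}^*(\omega_{\mid L^\perp})$ are two linearly independent global sections of $R^2\phi_*\mathbb{R}=\bigwedge^2(H_{1,\mathbb{R}})^*$, a contradiction. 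The count of invariant classes in $R^2\phi_*\mathbb{R}$ is the step your proposal is missing; once you have it, no appeal to Mumford--Tate groups, large monodromy, semisimplicity, or genericity of $X$ is needed.
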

 \begin{proof}
 According to Matsushita's Proposition \ref{promatsu}, the
restriction map
$H^2(X,\mathbb{Q})\rightarrow H^2(X_b,\mathbb{Q})$
has rank $1$. By Deligne's invariant cycle theorem, this implies that the
local system $R^2\phi_*\mathbb{R}$ on $B^0$ has only one global section (provided by the
polarization $\omega$ and its real multiples).
If the local system $(R^1\phi_*\mathbb{R})^*$ contains a nontrivial local subsystem
$L$
which carries a real variation of Hodge structures, the restriction of $\omega$
to $L$ is nondegenerate and the orthogonal complement
$L^{\perp \omega}$ with respect to $\omega$ is also a local subsystem of $(R^1\phi_*\mathbb{R})^*$ on which $\omega$ is nondegenerate. Thus
we have a decomposition
\begin{eqnarray}
\label{ledecomp}(R^1\phi_*\mathbb{R})^*=L \oplus L^{\perp }
\end{eqnarray}
and if both $L$ and $L^\perp$ were nontrivial, we would get two independent sections of $R^2\phi_*\mathbb{R}=\bigwedge^2((R^1\phi_*\mathbb{R})^{*})^*$,
namely $\pi_L^*\omega_{\mid L}$ and $\pi_{L^{\perp}}^*(\omega_{\mid L^{\perp}})$, where
$\pi_L$ and $\pi_{L^{\perp}}$ are the two projections associated with
the decomposition
(\ref{ledecomp}). This is a contradiction, hence a proper nontrivial such  $L$ does not exist.
\end{proof}
If $\nu_M$ is not of torsion, the Zariski closure of $\mathbb{Z}\nu_M(B)$ has an irreducible component which
is a subfamily of abelian varieties (of positive dimension) of ${\rm Pic}^0(X_{reg}/B_{reg})$. Such a family must be equal
to ${\rm Pic}^0(X_{reg}/B_{reg})$ by Lemma \ref{lenovhs}. This establishes assumption (ii) of Theorem
\ref{theoandre}. For the same reason, ${\rm Pic}^0(X_{reg}/B_{reg})$ has no
locally trivial subfamily because  we are in the nonisotrivial case and this establishes
assumption  (i) of Theorem
\ref{theoandre}.

\begin{proof}[Proof of Theorem \ref{theomain} when ${\rm dim}\,X\leq 8$ in  the case of maximal variation]
The maximal variation assumption tells us that
at a general point $b\in B_{reg}$
the map
$$d\mathcal{P}: T_{B,b}\rightarrow {\rm Hom}(H_{1,0,b}, H_{0,1,b})$$
is injective.

Reasoning by contradiction, if $\nu_M$ has the property that its torsion points are not dense in $B_{reg}$, then as explained above,
Theorem \ref{theoandre} applies and tells that the map
$\overline{\nabla}: H_{1,0,b}\rightarrow {\rm Hom}(T_{B,b}, H_{0,1,b})$ associated to $d\mathcal{P}$ above has the property  that
$\overline{\nabla}_\lambda\in {\rm Hom}(T_{B,b}, H_{0,1,b})$ is not an isomorphism for any $\lambda\in H_{1,0,b}$.
According to Theorem \ref{theodonagi}, these two maps are in fact identical and
the $\overline{\nabla}_\lambda$'s are  the partial derivatives
 $\partial_\lambda g^{(3)}$ of a cubic form  $g^{(3)}$ on $T_{B,b}=H_{1,0,b}$.
The homogeneous polynomials of degree $3$ in $n$ variables satisfying the condition that all of their partial derivatives are degenerate quadrics
have been classified by   Hesse and Gordan-Noether if $n\leq 4$, and by Lossen \cite{lossen} if $n\leq 5$.
We refer to section \ref{secn5} for the latter.
In the case where $n\leq4$, one has the following
\begin{theo} (see \cite{lossen} Any cubic homogeneous polynomial in $n\leq4$ variables all of whose partial derivatives are degenerate is a cone.
\end{theo}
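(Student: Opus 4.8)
The plan is to translate the hypothesis into the classical Hessian condition and then run the Gordan--Noether argument, which is exactly what works in $n\le 4$ variables. Write $f$ for the cubic form in variables $x_1,\dots,x_n$. Since the $\partial_i f$ are quadrics, the symmetric matrix of the quadratic form $\partial_\lambda f=\sum_i\lambda_i\,\partial_i f$ is precisely the Hessian matrix $\mathrm{Hess}(f)$ evaluated at the point $\lambda$; hence ``$\partial_\lambda f$ is a degenerate quadric for every $\lambda$'' — i.e.\ every $\overline{\nabla}_\lambda$ fails to be an isomorphism — is equivalent to the identical vanishing $\det\mathrm{Hess}(f)\equiv 0$, which in turn is equivalent to the polar map $\phi_f=[\partial_1 f:\dots:\partial_n f]\colon\mathbb{P}^{n-1}\to\mathbb{P}^{n-1}$ not being dominant. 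I will also use the elementary equivalence: $f$ is a cone if and only if the $\partial_i f$ are linearly dependent, i.e.\ if and only if the image $Z:=\overline{\mathrm{im}\,\phi_f}$ lies in a hyperplane of $\mathbb{P}^{n-1}$. Arguing by contradiction, I assume $Z$ is non-degenerate (spans $\mathbb{P}^{n-1}$) and that $m:=\dim Z\le n-2$.

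The cases $n\le 3$ are classical and I would dispose of them quickly. For $n\le 2$ it is elementary: for $n=2$, the identity $\det\mathrm{Hess}(f)\equiv 0$ forces the coefficient vector of $f$ onto the twisted cubic, i.e.\ $f=\ell^3$, a cone (and $n=1$ is trivial). For $n=3$ (plane cubics) one checks directly (Hesse) that the only plane cubics with vanishing Hessian are the cones — the triple line, the double line plus a line, and three concurrent lines; equivalently, a reduced irreducible cubic or a cubic with components in general position has a nonvanishing Hessian curve. The substantial case is $n=4$, i.e.\ cubic surfaces in $\mathbb{P}^3$.

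For $n=4$ I would follow Gordan--Noether. The crucial structural input — which uses the vanishing of $\det\mathrm{Hess}(f)$ in an essential way and fails for a general non-dominant map — is that the closure $\Lambda_p$ of the general fibre $\phi_f^{-1}(p)$ is a \emph{linear} subspace of $\mathbb{P}^{n-1}$, of dimension $(n-1)-m\ge 1$. Granting this, the fibre relations $\partial_i f=\mu\cdot p_i$ on the affine cone over $\Lambda_p$, combined with the Euler identity, yield two things: first, $T_q\Lambda_p\subset\ker\mathrm{Hess}(f)(q)$ for $q\in\Lambda_p$, so $\mathrm{rk}\,\mathrm{Hess}(f)\le (n-1)-\dim\Lambda_p$ along $\Lambda_p$; second, $f$ restricted to $\Lambda_p$ is the cube of a linear form (or identically zero). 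With $n-1\le 3$ this leaves only $(\dim\Lambda_p,m)\in\{(1,2),(2,1)\}$: either $Z$ is a non-degenerate surface in $\mathbb{P}^3$ and the $\Lambda_p$ form a two-dimensional family of lines covering $\mathbb{P}^3$, or $Z$ is a non-degenerate curve in $\mathbb{P}^3$ and the $\Lambda_p$ form a one-dimensional family of planes covering $\mathbb{P}^3$. In each case I would vary $p$ along $Z$, use the Gauss map of $Z$ to relate the $\Lambda_p$ to the tangent directions of $Z$, and show that the linear forms cutting out $f|_{\Lambda_p}$ assemble into a single linear relation among the $\partial_i f$, forcing $f$ to be a cone, contrary to assumption.

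The main obstacle is clearly the linearity of the general fibre of $\phi_f$: this is the heart of the Gordan--Noether theorem, and everything after it is a bounded case analysis in projective spaces of dimension $\le 3$. A legitimate alternative — and probably the one to prefer — is to invoke the theorem of Gordan--Noether directly (respectively that of Lossen, cf.\ Section~\ref{secn5}, for $n\le 5$) rather than reproducing its proof.
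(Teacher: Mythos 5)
The paper offers no proof of this statement at all: it is quoted as a classical classification result, attributed to Hesse and Gordan--Noether for $n\le 4$ and to Lossen for $n\le 5$, with \cite{lossen} as the reference. So your closing remark --- that the legitimate route is to invoke Gordan--Noether (or Lossen) directly --- is precisely what the paper does, and on that reading your proposal agrees with the paper.

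Judged as a self-contained proof, however, your sketch has a genuine gap. The preliminary reductions are correct: the symmetric matrix of $\partial_\lambda f$ is (up to a factor) $\mathrm{Hess}(f)$ evaluated at $\lambda$, so degeneracy of all directional partials is equivalent to $\det\mathrm{Hess}(f)\equiv 0$; being a cone is equivalent to linear dependence of the $\partial_i f$, i.e.\ degeneracy of the image of the polar map; and $n\le 3$ is disposed of by Hesse's classification of plane cubics with vanishing Hessian. But for $n=4$ everything hinges on the assertion that the closure of the general fibre of $\phi_f$ is a linear subspace, and you yourself flag that this is ``the heart of the Gordan--Noether theorem'': it is not a routine consequence of $\det\mathrm{Hess}(f)\equiv 0$ and requires the Gordan--Noether identities (substituting the partials into an equation of the image and differentiating). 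The concluding case analysis for $(\dim\Lambda_p,m)\in\{(1,2),(2,1)\}$ is likewise only gestured at. (A smaller quibble: since $\mathrm{Hess}(f)(q)\cdot q=2\nabla f(q)\ne 0$ at a general $q\in\Lambda_p$, the radial direction is not in the kernel, so the fibre directions alone give $\mathrm{rk}\,\mathrm{Hess}(f)\le n-\dim\Lambda_p$ rather than $(n-1)-\dim\Lambda_p$; the stronger bound needs an extra argument.) In short: either cite Gordan--Noether/Lossen, as the paper does, or be prepared to prove the linearity of the fibres of the polar map --- as written, the sketch assumes the theorem's hardest ingredient.
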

Being a cone means that one partial derivative derivative is identically $0$, which contradicts
 the injectivity of the map $T_{B,b}\rightarrow {\rm Hom}(H_{1,0,b}, H_{0,1,b})$.
\end{proof}
\begin{rema}{\rm Part of the arguments described in this section also appear in Lin's thesis \cite{linthesis}.
}
\end{rema}

\section{Proof of Theorem \ref{theomain} when the fibration is isotrivial \label{sec3}}
\subsection{Further general facts about sections of lagrangian fibrations\label{secpetitechose}}
Let $\phi: X\rightarrow B$ be a Lagrangian fibration, where $X$ is a projective irreducible hyper-K\"ahler
manifold of dimension
$2n$,
and let $B^0\subset X^0$ be the Zariski open set of regular values of $\phi$. Denote by $\sigma$  the holomorphic $2$-form on $X$ and by $\omega$  the first Chern class of an ample line bundle $H$ on $X$.
Over $B^0$, the fibration $\phi:X^0\rightarrow B^0$ is a fibration into abelian varieties
or rather a torsor on the corresponding Albanese fibration
${\rm Alb}(X^0/B^0)$, which  admits a multisection $Z\rightarrow B^0$ of degree $d$.
Using these data, $X^0\rightarrow B^0$
 is isogenous  to the associated Albanese fibration ${\rm Alb}(X^0/B^0)$ via the map
 $$i_Z: X^0\rightarrow {\rm Alb}(X^0/B^0)$$
 defined by $i_Z(u)={\rm alb}_{X_b}(d\{u\}-Z_b),\,\,b=\phi(u)$. Here  the notation
 $\{u\}$ is used to denote $u$ as a $0$-cycle
 in the fiber $X_b$ (in order to avoid
 the confusion between addition of cycles and addition of points), so
 $d\{u\}-Z_b$ should be thought as a $0$-cycle of degree
$0$ on $X_b$, giving rise to an element ${\rm alb}_{X_b}(d\{u\}-Z_b)\in{\rm Alb}(X_b)$.
Next, using the polarisation $H$, the abelian fibration
${\rm Alb}(X^0/B^0)$ and  its dual fibration ${\rm Pic}^0(X^0/B^0)$ are isogenous,
  the isogeny
\begin{eqnarray}\label{eqisog} i_H:{\rm Alb}(X^0/B^0)\rightarrow {\rm Pic}^0(X^0/B^0)
\end{eqnarray}
being given by $$i_H(b, v_b)=t_{v_b}^*(H_{\mid X_b})-H_{\mid X_b}$$
for any $b\in B^0$, $v_b\in {\rm Alb}(X_b)$. Here
the translation $t_{v_b}$ acts on $X_b$.
 The cycle $H^{n-1}\in{\rm CH}^{n-1}(X)$ also gives  an isogeny
\begin{eqnarray}\label{eqisogdual} {\rm alb}_{X^0/B^0}\circ H^{n-1}:{\rm Pic}^0(X^0/B^0)\rightarrow
{\rm Alb}(X^0/B^0)
\end{eqnarray}
which is easily shown to provides an inverse of (\ref{eqisog}) up to a nonzero coefficient.

Note now that the $(2,0)$-form $\sigma$ on $X$ is the pull-back of a $(2,0)$-form $\sigma_A$ on ${\rm Alb}(X^0/B^0)$ (or on ${\rm Pic}^0(X^0/B^0)$)
via the rational map $i_Z:X\dashrightarrow {\rm Alb}(X^0/B^0)$. This indeed follows easily from
Mumford's theorem \cite{mumford}, since for any two points $x,\,y$ of
$X^0$ such that $i_Z(x)=i_Z(y)$ the difference ${\rm alb}_{X_b}(\{x\}-\{y\})$ is a torsion point in
${\rm Alb}\,(X_b)$, $b=\phi(x)=\phi(y)$, hence the cycle $\{x\}-\{y\}$ is a torsion cycle
in ${\rm CH}_0(X_b)$ and a fortiori in ${\rm CH}_0(X)$ (in fact it actually vanishes in ${\rm CH}_0(X)$
since the later group has no torsion).
Using the maps above,  we get   as well  a  nondegenerate $(2,0)$-form $\sigma_A$ on
$A:={\rm Pic}^0(X^0/B^0)$, extending to a smooth projective completion, and for which the fibration is Lagrangian. Furthermore $\sigma=((i_H\circ i_Z)^*\sigma_A$.

Our analysis of the  properties of the holomorphic section $\nu$ of
the torus fibration will be infinitesimal hence local, but we want first to add one restriction
which is global:
The Lagrangian fibration $\phi:X\rightarrow B$ being as above, let
 $M\in{\rm Pic}\,X$  be a line bundle on
$X$ which is topologically trivial on the fibers and let $\nu=\nu_M:B^0\rightarrow A $ be the
associated  normal function.
\begin{lemm} \label{lelag} The section $\nu$ is Lagrangian for the holomorphic
$2$-form $\sigma_A$.
\end{lemm}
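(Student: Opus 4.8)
The plan is to prove that $\nu^*\sigma_A=0$; since $\nu$ is a section, $\nu(B^0)\cong B^0$ has dimension $n=\tfrac12\dim A$, so once the pullback of $\sigma_A$ vanishes the image is automatically a Lagrangian submanifold of $A$. It is worth recording first the local picture that motivates the statement: on a simply connected $U\subset B^0$ the isomorphism (\ref{eqisovbundlesdual}) lets one lift a holomorphic section of $\mathcal{H}_{0,1}/H_{1,\mathbb{Z}}$ to a holomorphic $1$-form $\beta$ on $U$, well defined modulo the closed forms $df_i$ of Lemma \ref{leformalphaiclosed}; in the Donagi--Markman normal form for a Lagrangian torus fibration \cite{donagimarkaman} the pullback of $\sigma_A$ by the corresponding section is a nonzero multiple of $d\beta$, so being Lagrangian amounts to $\beta$ being closed, a condition independent of the lift. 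Thus the assertion is a closedness statement, and the point is that it holds for $\nu=\nu_M$ because $\nu_M$ is cut out by an actual line bundle $M$ on the total space $X$, i.e. it is of global origin.

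I would deduce it from a vanishing of holomorphic $2$-forms on a rational variety. Recall that $\sigma_A$, defined a priori on $A={\rm Pic}^0(X^0/B^0)$, extends to a holomorphic $2$-form on a smooth projective completion $\overline{A}\supset A$ (as recalled above). Since $\nu_M$ is an \emph{algebraic} section, it extends to a rational map $B\dashrightarrow \overline{A}$; resolving its indeterminacy by a composition of blow-ups $\mu\colon\widetilde{B}\to B$ gives a morphism $\widetilde{\nu}\colon\widetilde{B}\to\overline{A}$ with $\widetilde{\nu}=\nu_M\circ\mu$ over $B^0$. Then $\widetilde{\nu}^*\sigma_A$ is a global holomorphic $2$-form on the smooth projective variety $\widetilde{B}$, and its restriction to $\mu^{-1}(B^0)\cong B^0$ is exactly $\nu_M^*\sigma_A$. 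Now $\widetilde{B}$ is birational to $B$, which is rationally connected (indeed $B\cong\mathbb{P}^n$ when $B$ is smooth by Hwang \cite{hwang}, and in general the base of a Lagrangian fibration on a projective hyper-K\"ahler manifold is rationally connected; cf. \cite{lin}). A rationally connected smooth projective variety carries no nonzero holomorphic $p$-form for $p\geq 1$, so $\widetilde{\nu}^*\sigma_A=0$, hence $\nu_M^*\sigma_A=0$ on $B^0$, which is the claim.

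The only ingredient here that goes beyond formal manipulation is the vanishing $H^0(\widetilde{B},\Omega^2_{\widetilde{B}})=0$, that is, the rational connectedness of $B$ (equivalently $B\cong\mathbb{P}^n$ in the smooth case); this is where I expect the main difficulty to sit, the rest — the extension of $\sigma_A$ to $\overline{A}$, the resolution of the rational map, and the dimension count — being routine. An alternative, purely infinitesimal, route would be to check directly that the $1$-form $\beta$ attached to $\nu_M$ above is closed, by identifying $d\beta$ with the ${\rm Gr}^1$-component (for the Leray filtration of $H^2(X^0)$) of $c_1(M)\in H^{1,1}(X)$ and using that $c_1(M)$ is $d$-closed of type $(1,1)$; this avoids compactifications at the cost of a Hodge-theoretic computation, and since the later sections only use the resulting local statement, either argument suffices.
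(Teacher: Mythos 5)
Your argument is correct and shares the paper's skeleton: reduce Lemma \ref{lelag} to showing that $\nu_M^*\sigma_A$ extends to a holomorphic $2$-form on a smooth projective model $\widetilde{B}$ of $B$, then invoke $H^0(\widetilde{B},\Omega^2_{\widetilde{B}})=0$. Both steps, however, are implemented differently. For the extension, you resolve the indeterminacy of $B\dashrightarrow \overline{A}$, relying on the extension of $\sigma_A$ to a smooth projective completion $\overline{A}$ of $A$; the paper avoids compactifying $A$ altogether and instead transports the computation back to $X$: the preimage $\Gamma=(i_H\circ i_Z)^{-1}({\rm Im}\,\nu)$ is an algebraic multisection of $\phi$, its closure gives a correspondence $\overline{\Gamma}\subset\widetilde{B}\times X$, and $\overline{\Gamma}^*\sigma=N\,\nu^*(\sigma_A)$ on $B^0$, the left-hand side being manifestly a global holomorphic $2$-form on $\widetilde{B}$. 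For the vanishing, you appeal to rational connectedness of the base (Hwang's $B\cong\mathbb{P}^n$ when $B$ is smooth, and a heavier input --- essentially that $B$ is klt Fano --- in the singular case), whereas the paper uses only that $X$ is irreducible hyper-K\"ahler: a nonzero holomorphic $2$-form on $\widetilde{B}$ would pull back under the dominant rational map $\phi:X\dashrightarrow\widetilde{B}$ to a nonzero holomorphic $2$-form on $X$ that is degenerate along the fibers, hence not proportional to $\sigma$, contradicting $h^{2,0}(X)=1$. The paper's versions of both steps are thus more economical and self-contained (in particular they do not require Hwang's theorem or any statement about singular bases); your route is equally valid granting the inputs you cite, and the precise point you flag as the main difficulty is exactly the one the paper's argument sidesteps.
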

\begin{proof} Let $\widetilde{B}$ be a desingularization of $B$. Then $H^0(\widetilde{B},\Omega_{\widetilde{B}}^2)=0$ since a nonzero holomorphic $2$-form on $\widetilde{B}$
would provide by pull-back via the rational map
$\phi:X\dashrightarrow \widetilde{B}$ a nonzero holomorphic form on $X$ which is not
proportional to $\sigma$. So we just have to prove that
$\nu^*(\sigma_A)$ (which is a priori only defined as a holomorphic $2$-form
on $B^0$) extends to a holomorphic $2$-form on $\widetilde{B}$. However, we can provide another
construction of $\nu^*(\sigma_A)$ which will make clear that it extends to
$\widetilde{B}$: Looking at the definitions of $i_H,\,i_Z$, the subvariety
$\Gamma:=(i_H\circ i_Z)^{-1}({\rm Im}\,\nu)$ is algebraic in $X^0$ and it is a multisection
of $\phi$  over $B^0$. Its Zariski closure in $\widetilde{B}\times X$ will provide a
correspondence $\overline{\Gamma}\subset \widetilde{B}\times X$.
Then as $\sigma=i_Z^*\sigma_A $ on $X^0$, we get that
\begin{eqnarray}\label{eqform} \overline{\Gamma}^*\sigma=N\nu^*(\sigma_A)
\end{eqnarray}
on $B^0$, where $N$ is the degree of $i_H\circ i_Z$. This gives the desired extension of
$\nu^*(\sigma_A)$.
\end{proof}

\subsection{Degenerate normal function and degenerate Monge-Amp\`ere equation\label{secMA}}
We complete the Donagi-Markman analysis in section \ref{subsec}
 by adding to the data described there the normal function $\nu:U\rightarrow A$, which
satisfies the property that $\nu^*\sigma_A=0$, which is the global restriction coming from Lemma \ref{lelag}.
This normal function $\nu\in \Gamma(U,\mathcal{A})=\Gamma(U,\mathcal{H}_{0,1}/H_{1,\mathbb{Z}})$ lifts locally to  a section
$\tilde{\nu}$
of $\mathcal{H}_{0,1}\cong \Omega_U$, hence provides locally a holomorphic $1$-form
$\alpha$ on $U$.
\begin{lemm}\label{leclosed}
The form $\alpha$ is closed.
\end{lemm}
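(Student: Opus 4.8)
The plan is to establish the identity $\nu^*\sigma_A=d\alpha$; since $\nu^*\sigma_A=0$ by Lemma~\ref{lelag}, the closedness of $\alpha$ follows at once. Everything being local, I work over a simply connected $U$ equipped with holomorphic coordinates $z_1,\dots,z_n$. Transporting the frame $dz_1,\dots,dz_n$ of $\Omega_U$ to a holomorphic frame $\mu_1,\dots,\mu_n$ of $\mathcal{H}_{0,1}$ via the isomorphism (\ref{eqisovbundlesdual}), the torus fibration $A=\mathcal{H}_{0,1}/H_{1,\mathbb{Z}}$ becomes over $U$ the quotient of $U\times\mathbb{C}^n$ (with linear fibre coordinates $w_1,\dots,w_n$ relative to $\mu_\bullet$) by $H_{1,\mathbb{Z}}$, and the local lift $\tilde\nu=\sum_j w_j(z)\,\mu_j$ of $\nu$ corresponds under (\ref{eqisovbundlesdual}) to exactly the $1$-form $\alpha=\sum_j w_j(z)\,dz_j$ of the statement.

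The first step is to pin down $\sigma_A$ in these coordinates as the standard form $\Omega:=\sum_{j=1}^n dw_j\wedge dz_j$, which is essentially the normal form of the local structure theory recalled in \S\ref{subsec} (Donagi--Markman). Concretely, I would verify that $\Omega$ descends from $U\times\mathbb{C}^n$ to $A$: writing the image in $\mathcal{H}_{0,1}$ of a flat section $e$ of $H_{1,\mathbb{Z}}$ as $\bar e=\sum_j P_j(z)\,\mu_j$, the corresponding $1$-form under (\ref{eqisovbundlesdual}) is $\alpha_e=\sum_j P_j\,dz_j$, which is closed by Lemma~\ref{leformalphaiclosed}; since translation by $e$ acts by $w_j\mapsto w_j+P_j(z)$ it alters $\Omega$ by $\sum_j dP_j\wedge dz_j=d\alpha_e=0$, so $\Omega$ is $H_{1,\mathbb{Z}}$-invariant and descends to a holomorphic symplectic form on $A$ with Lagrangian fibres inducing (\ref{eqisovbundlesdual}) by interior product. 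These properties characterize $\sigma_A$ up to adding a pullback $\phi^*\beta$ of a holomorphic $2$-form $\beta$ on $U$: the difference of two forms with these properties is closed and annihilated by every vertical interior product, hence --- by Cartan's formula --- invariant along the fibres, hence pulled back from $U$. To rule out $\phi^*\beta$ I would restrict to the zero section $s_0$ of $A$: it lifts to $0$ in $\mathcal{H}_{0,1}$, so $s_0^*\Omega=0$, while $s_0^*\sigma_A=0$ by Lemma~\ref{lelag} applied to the trivial line bundle $\mathcal{O}_X$, whose normal function is $s_0$. Hence $\beta=s_0^*\sigma_A-s_0^*\Omega=0$, i.e. $\sigma_A=\Omega$.

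The conclusion is then a one-line computation: $\nu$ is given in the chart by $z\mapsto\big(z,w_1(z),\dots,w_n(z)\big)$, so
\[
\nu^*\sigma_A=\nu^*\Big(\sum_j dw_j\wedge dz_j\Big)=\sum_j d\big(w_j(z)\big)\wedge dz_j=d\Big(\sum_j w_j(z)\,dz_j\Big)=d\alpha,
\]
and $d\alpha=0$ because $\nu^*\sigma_A=0$ by Lemma~\ref{lelag}. The one point I expect to require care is the normalization $\sigma_A=\Omega$: the local data reconstruct $(A,\sigma_A)$ only up to a $2$-form pulled back from the base, and eliminating that ambiguity forces a genuinely global input --- here, exactly as in the proof of Lemma~\ref{lelag}, the fact that the normal function of $\mathcal{O}_X$ (the zero section) is Lagrangian, which in turn rests on the vanishing of holomorphic $2$-forms on a desingularization of $B$. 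Everything after that is the classical fact that pulling the canonical symplectic form of a cotangent bundle back along a section returns the exterior derivative of the corresponding $1$-form.
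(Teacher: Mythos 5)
Your proof is correct and follows the same route as the paper's: both reduce the statement to the identity $\nu^*\sigma_A=d\alpha$, obtained by identifying the pullback of $\sigma_A$ to the total space of $\mathcal{H}_{0,1}\cong\Omega_U$ with the canonical $2$-form $\sigma_{can}$ of the cotangent bundle and then invoking $\beta^*(\sigma_{can})=d\beta$ for a $1$-form $\beta$ viewed as a section. The only difference is that the paper cites this identification as a general fact of integrable-systems theory (Donagi--Markman), whereas you prove it, correctly isolating the one genuinely global input needed to remove the ambiguity by a form $\phi^*\beta$ from the base --- namely that the zero section is Lagrangian, which is Lemma~\ref{lelag} applied to $M=\mathcal{O}_X$.
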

\begin{proof}
It is a general fact of the theory of  integrable systems (see \cite{donagimarkaman})
that the isomorphism (\ref{eqisovbundlesdual}) is also constructed in such a way that the
pull-back $\tilde{\sigma}_A$ of the holomorphic  $2$-form $\sigma_A$ to the total space of the bundle
$\mathcal{H}_{0,1}$ identifies with the canonical $2$-form
$\sigma_{can}$
on the cotangent bundle $\Omega_U$, which has the property that
for any  holomorphic $1$-form $\beta$ on $U$, seen as a section
of $\Omega_U$, one has
\begin{eqnarray}
\label{eqsigmacan}
d\beta={\beta}^*(\sigma_{can}).\end{eqnarray}
As the section
$\tilde{\nu}$ corresponds via this isomorphism to the $1$-form
$\alpha$, the assumption that
${\nu}^*({\sigma}_{A})=0$ says  equivalently that
$$0=\tilde{\nu}^*(\tilde{\sigma}_{A})=\alpha^*(\sigma_{can})=d\alpha.$$
\end{proof}
The closed holomorphic $1$-form $\alpha$ thus provides us on simply connected open sets $U'$ of $U$ with a holomorphic function
$f$  such that $df=\alpha$.
\begin{rema}{\rm From now on, the holomorphic $2$-form on our Lagrangian fibration will disappear. It is hidden in the fact that we work with  the cotangent bundle of the base, which has a canonical holomorphic $2$-form.
}
\end{rema}

We are in the local setting where the holomorphic functions
 $f_i,\,f$ constructed in the previous section exist.
Let thus $V$ be a complex manifold, and let
$f_1,\ldots, f_{2n}$ be holomorphic functions on $V$ such that
the functions $x_i:={\rm Re}\,f_i$ provide real coordinates on $V$, giving rise
to a real variation of Hodge structures
\begin{eqnarray}\label{eqrealvhs}\mathbb{R}^{2n}\otimes_\mathbb{R} \mathcal{O}_V\stackrel{ev}{\rightarrow} \Omega_V\rightarrow 0,\\
\nonumber e_i\mapsto df_i
\end{eqnarray}
 with underlying local system $\mathbb{R}^{2n}=T_{V,\mathbb{R}}$ and Hodge bundles
 $\mathcal{H}_{1,0}={\rm Ker}\,ev,\,\mathcal{H}_{0,1}=\Omega_V$.
Let $f$ be  a holomorphic function on $V$; as the $df_i$'s are independent over $\mathbb{R}$ in
$\Omega_{V,b}$ at any point $b\in V$, we can write the equality
\begin{eqnarray}\label{eqpourdf1}df=\sum_i a_i df_i,\end{eqnarray}
of $\mathcal{C}^\omega$ forms of type $(1,0)$ on $V^0$,
where the $a_i$'s are real $\mathcal{C}^\omega$ functions on $V$, uniquely determined by (\ref{eqpourdf1}).
 Denoting $x_i:={\rm Re}\,f_i$, the $x_i$'s thus provide locally
real coordinates on $V^0$.
 Note that (\ref{eqpourdf1}) is obviously equivalent, since the $a_i$'s are real and the considered
forms are of type $(1,0)$, to the equality of real $1$-forms:
\begin{eqnarray}
\label{eqpourdfreel}
d({\rm Re}\,f)=\sum_i a_i dx_i.
\end{eqnarray}
We assume that the map $a_\cdot=(a_1,\ldots,a_{2n}):V^0\rightarrow\mathbb{R}^{2n}$
is of constant rank $2n-2k$ (the rank is even by Lemma \ref{leker}). The condition that $a_\cdot$ is not of maximal rank at the general point is a degenerate real  Monge-Amp\`ere equation
(see \cite{refMA}).
\begin{lemm}\label{leaffine} The fibers of $a_\cdot$ are affine in the coordinates $x_i$.
\end{lemm}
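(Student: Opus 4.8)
We want to show that the level sets of the map $a_\cdot = (a_1,\ldots,a_{2n}) : V^0 \to \mathbb{R}^{2n}$ are affine subspaces in the real coordinates $x_i = {\rm Re}\,f_i$. The plan is to extract from the defining relation \eqref{eqpourdf1} a differential identity for the functions $a_i$ that forces their differentials, restricted to the level set of $a_\cdot$, to annihilate the relevant coordinate directions in an \emph{affine-linear} way. The key structural input is that $df$ and all the $df_i$ are \emph{closed} holomorphic $1$-forms (Lemma \ref{leformalphaiclosed} for the $f_i$, and the fact that $f$ is a local primitive of the closed form $\alpha$), so differentiating \eqref{eqpourdf1} gives a clean vanishing condition.

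**Main computation.** Differentiating the identity $df = \sum_i a_i\, df_i$ and using $d(df)=0$, $d(df_i)=0$, we obtain
\begin{eqnarray}\label{eqdaiwedge}
0 = \sum_i da_i \wedge df_i
\end{eqnarray}
as a holomorphic-coefficient $2$-form on $V^0$. Now each $a_i$ is a real $\mathcal{C}^\omega$ function, so write $da_i = \partial a_i + \bar\partial a_i$. Since the $df_i$ are of type $(1,0)$, the $(0,2)$ and $(1,1)$ components of \eqref{eqdaiwedge} give respectively $\sum_i \bar\partial a_i \wedge df_i = 0$ and $\sum_i \partial a_i \wedge df_i = 0$ (using that the $df_i$ span $\Omega^{1,0}_V$ pointwise over $\mathbb{C}$, one checks each bidegree component vanishes separately). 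Because the $(1,0)$-forms $df_i$, $i=1,\ldots,2n$, are independent over $\mathbb{R}$ at each point — equivalently, $df_1,\dots,df_n$ form a $\mathbb{C}$-basis of $\Omega^{1,0}_{V,b}$ while $df_{n+1},\dots,df_{2n}$ are their conjugates composed with the Hodge structure, so that the real span of all $2n$ of them is all of $\Omega_{V,b}$ — the relation $\sum_i \partial a_i \wedge df_i = 0$ together with its conjugate is equivalent to a symmetry condition: expressing everything in a local holomorphic coordinate system $z_1,\dots,z_n$ (with $z_j = f_j$ as in Theorem \ref{theodonagi}), the $2n\times n$ matrix of coefficients of the $da_i$ in the $dz_j$ and $d\bar z_j$ is constrained so that $\sum_i \left(\frac{\partial a_i}{\partial x_k}\right)\frac{\partial f_i}{\partial z_j}$ is symmetric in $j,k$ in an appropriate sense. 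Concretely, the upshot is that in the $x$-coordinates there is a real symmetric matrix-valued function $(G_{k\ell})$ with $da_i = \sum_{k} (\text{something involving } G)\, dx_k$, and \eqref{eqdaiwedge} forces the Hessian-type relation making $\sum a_i dx_i$ a closed form whose potential has symmetric second derivatives — i.e. the $a_i$ are (real parts of) a gradient, as already reflected in \eqref{eqpourdfreel}. The rank hypothesis then enters: on a level set $a_\cdot = c$, the tangent space is $\bigcap_i \ker(da_i)$, which has constant dimension $2k$, and the identity \eqref{eqdaiwedge} shows that this kernel is \emph{parallel} for the flat affine structure given by the $x_i$ (the coefficients of the relations among the $da_i$ are constant), hence the level set is a union of pieces of affine subspaces.

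**The cleanest route.** Rather than grinding through the matrix symmetry, I would argue as follows. By \eqref{eqdaiwedge}, at each point $b$ the covectors $da_1|_b,\dots,da_{2n}|_b \in \Omega_{V,b}$ (identified with $\mathbb{R}^{2n}$ via the $dx_i$) satisfy $\sum_i da_i \wedge df_i = 0$; decomposing $da_i = \sum_k c_{ik}\, dx_k$ with $c_{ik}$ real and using that $(dx_i)$ is, up to the fixed complex structure, a basis adapted to the $df_i$, this translates into $c_{ik} = c_{ki}$ for all $i,k$ — the matrix $C(b) = (c_{ik})$ is symmetric. (This is exactly the statement that $\sum a_i dx_i$ is a closed $1$-form, $d(\sum a_i dx_i) = \sum_{i<k}(c_{ik}-c_{ki})\,dx_k\wedge dx_i$, combined with \eqref{eqpourdfreel} which says $d(\mathrm{Re}\,f) = \sum a_i dx_i$ is \emph{exact}, hence closed — so symmetry of $C$ is automatic and we don't even need \eqref{eqdaiwedge} for this part.) Now $a_\cdot$ has constant rank $2n-2k$, so $\ker C(b)$ has constant dimension $2k$; and since $C$ is the Hessian of the function ${\rm Re}\,f$ in the affine coordinates $x$, a level set $a_\cdot^{-1}(c)$ is cut out by the equations $a_i(x) = c_i$. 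To see these level sets are affine, note that along a path $x(t)$ in $a_\cdot^{-1}(c)$ the velocity lies in $\ker C(x(t))$ at every point, i.e. $\sum_k c_{ik}(x(t))\,\dot x_k(t) = 0$; I claim this kernel distribution is actually integrable with affine leaves, which follows because $C = \mathrm{Hess}({\rm Re}\,f)$ and the gradient map $x \mapsto (a_i(x))$ has constant rank — then by the constant rank theorem near any point $a_\cdot^{-1}(c)$ is a manifold whose tangent space is $\ker C$, and differentiating $a_i(x(t)) = c_i$ twice shows the second derivative $\ddot x$ also lies in $\ker C$, while the third-order terms vanish by closedness, so each component $x_k(t)$ is forced to be \emph{linear} in $t$ along any such path. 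Running this over all directions gives that $a_\cdot^{-1}(c)$ is an affine subspace.

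**Expected main obstacle.** The delicate point is passing from the pointwise symmetry/kernel information to the global \emph{affineness} of the level set: a priori $\ker C(b)$ could twist as $b$ varies within a level set. The resolution is precisely the constant-rank hypothesis together with the fact that $C$ is a Hessian in flat coordinates — this is what pins the kernel distribution to be "parallel" and makes the leaves affine. I would present this carefully, probably by choosing coordinates adapted to $\ker C$ at a point and using the constant rank theorem to write $a_\cdot$ locally as projection onto the first $2n-2k$ coordinates composed with a diffeomorphism, then checking that diffeomorphism is affine by the third-derivative vanishing coming from $d\alpha = 0$. The bookkeeping with the complex structure (reconciling "$2n$ real $1$-forms $df_i$ independent over $\mathbb{R}$" with "$n$ holomorphic coordinates $z_j$") is the part most likely to generate tedious sign- and index-chasing, but it is routine given Theorem \ref{theodonagi}.
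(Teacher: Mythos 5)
You correctly identify the key reduction: by (\ref{eqpourdfreel}) the map $a_\cdot$ is the gradient of ${\rm Re}\,f$ in the flat coordinates $x_i$, so the statement becomes the classical fact that the fibers of a constant-rank gradient map are affine, and the matrix $C=(\partial a_i/\partial x_k)$ is the (symmetric) Hessian of ${\rm Re}\,f$. But the step that is supposed to finish the proof does not work. Differentiating $a_i(x(t))=c_i$ twice gives $\sum_k c_{ik}\ddot x_k=-\sum_{k,\ell}\partial_i\partial_k\partial_\ell({\rm Re}\,f)\,\dot x_k\dot x_\ell$, and you assert that the right-hand side ``vanishes by closedness.'' Closedness only gives the full symmetry of the third-derivative tensor $T$; it does not give $T(\cdot,w,w')=0$ for $w,w'\in\ker C$. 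In fact, writing $W=\ker C$ and extending $w$ to a section of $W$ along the fiber, one computes $T(e_i,w,w')=-(C\,D_{w'}w)_i$, so the vanishing you need is \emph{equivalent} to $D_{w'}w\in\ker C$, i.e.\ to the vanishing of the second fundamental form of the fiber --- which is precisely the affineness you are trying to prove. The argument is therefore circular at its crucial point. (Two smaller issues: even granting $C\ddot x=0$ you only get $\ddot x\in\ker C$, not $\ddot x=0$, and no path in an affine subspace need have each $x_k(t)$ linear in $t$; and in the bidegree decomposition of $\sum_i da_i\wedge df_i$ the components are of type $(2,0)$ and $(1,1)$, not $(0,2)$ and $(1,1)$.)

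The paper closes this gap with a Legendre-transform argument that you do not have. Since $a_\cdot$ has constant rank it factors as a submersion onto a manifold $\Sigma\subset\mathbb{R}^{2n}$ of dimension $2n-2k$ followed by an immersion, and (\ref{eqpourdfreel}) shows that ${\rm Re}\,f-\sum_ia_ix_i$ has vanishing derivative along each fiber, hence equals $a_\cdot^*\psi$ for a function $\psi$ on $\Sigma$. Differentiating this identity in a direction $v$ with $a_{\cdot*}(v)=u\in T_{\Sigma,\sigma}$ and using (\ref{eqpourdfreel}) again, the terms $d({\rm Re}\,f)(v)$ and $\sum_ia_i\,dx_i(v)$ cancel, leaving $\sum_iu_ix_i=-d\psi(u)$ on the fiber $V^0_\sigma$. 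As $u$ runs through $T_{\Sigma,\sigma}$ this produces $2n-2k$ independent equations, affine in the $x_i$, satisfied on $V^0_\sigma$; since $V^0_\sigma$ has codimension exactly $2n-2k$, these equations cut it out. This is the mechanism that converts the constant-rank hypothesis into affineness, and it is the ingredient your proposal is missing.
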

\begin{proof} As the map $a_\cdot$ has locally constant rank  near $b\in V^0$, it factors through a
submersion $V^0\rightarrow \Sigma$ and an immersion $\Sigma\hookrightarrow \mathbb{R}^{2n}$, where
$\Sigma$ is real manifold of dimension $2n-2k$.
Equation (\ref{eqpourdfreel}) says equivalently that for fixed $\sigma=(\lambda_1,\ldots,\lambda_{2n})\in \Sigma$,
the function ${\rm Re}\,f-\sum_i\lambda_i x_i$ has vanishing differential on $V^0$ at any point of the fiber
$V^0_\sigma=a_\cdot^{-1}( \lambda_1,\ldots,\lambda_{2n})$.  The function
${\rm Re}\,f-\sum_ia_i x_i$ on $V^0$ is thus the pull-back $a_\cdot^* \psi$ of a function
$\psi$
on $\Sigma$, that is
\begin{eqnarray}
\label{eqpouraffinele} {\rm Re}\,f-\sum_ia_i x_i-a_\cdot^*\psi=0
\end{eqnarray}
on $V^0$.
If $u=(u_1,\ldots,u_{2n})\in T_{\Sigma,\sigma}$, we get  for any $b\in V^0_\sigma$, by differentiating (\ref{eqpouraffinele}) along any
$v\in T_{V^0,b}$ such that
${a_\cdot}_*(v)=u$ and applying (\ref{eqpourdfreel})
$$0=-\sum_iu_i x_i-d\psi(u),$$ which says that the function
$\sum_iu_i x_i$ is constant
 along the fiber $V^0_\sigma$. This provides $2n-2k$  real  equations, affine in the $x_i$'s, vanishing
 along $V^0_\sigma$ and independent over $\mathbb{R}$, and they must  define $V^0_\sigma$ in $V^0$ since $V^0_\sigma$ is a real submanifold
  of $V$ of codimension $2n-2k$.
\end{proof}
The following lemma was partially proved in the proof of Lemma \ref{leker}.
\begin{lemm} \label{lefibrecomp} The fiber $V^0_\sigma$ is a complex submanifold of $V^0$, along
which the functions $\sum_iu_i f_i$ are constant, for any $u_\cdot\in T_{\Sigma,\sigma}$.
\end{lemm}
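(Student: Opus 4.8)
The plan is to obtain both assertions by combining two facts that are already available: the constancy of the real-linear functions $\sum_i u_i x_i$ along $V^0_\sigma$ (Lemma \ref{leaffine}), and the computation carried out in the proof of Lemma \ref{leker}, which exhibits the level sets of the Betti map as complex analytic subsets. The bridge between the two is the trivial observation that $\sum_i u_i x_i$ is the real part of the holomorphic function $\sum_i u_i f_i$ (the $u_i$ being real), together with the elementary principle that a holomorphic function with locally constant real part on a complex manifold is locally constant. So the lemma is really a packaging statement, and the only step needing care is making precise that $V^0_\sigma$ is the zero set of a holomorphic section so that the mechanism of Lemma \ref{leker} applies.

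For the first assertion I would work on a simply connected $U'\subset V^0$ over which $H_{1,\mathbb Z}$ is trivialized by a flat frame $e_1,\dots,e_{2n}$ chosen so that the images $\overline e_i$ of the $e_i$ under the quotient $\mathcal H_1\to\mathcal H_{0,1}$ correspond, via (\ref{eqisovbundlesdual}), to the forms $df_i=\alpha_i$; then the chosen holomorphic lift $\tilde\nu$ of $\nu$ is $\sum_i a_i\overline e_i$. Given $\sigma\in\Sigma$, represented locally as a point $(\lambda_1,\dots,\lambda_{2n})\in\mathbb R^{2n}$ via the immersion $\Sigma\hookrightarrow\mathbb R^{2n}$, the section $\sum_i\lambda_i e_i$ of $H_{1,\mathbb R}$ is flat, hence induces a holomorphic section $\widetilde\lambda:=\sum_i\lambda_i\overline e_i$ of $\mathcal H_{0,1}$, and by construction $V^0_\sigma=\{\,b\in U':\tilde\nu_b=\widetilde\lambda_b\,\}$ is the zero locus of the holomorphic section $\tilde\nu-\widetilde\lambda$. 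This is exactly the computation of the proof of Lemma \ref{leker}, and it presents $V^0_\sigma$ as a closed complex analytic subset of $U'$. Since Lemma \ref{leaffine} and the constant rank hypothesis already present $V^0_\sigma$ as a smooth real submanifold (a fiber of the submersion $V^0\to\Sigma$), it is therefore a complex submanifold.

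For the second assertion, fix $u_\cdot=(u_1,\dots,u_{2n})\in T_{\Sigma,\sigma}\subset\mathbb R^{2n}$ and set $g_u:=\sum_i u_i f_i$, a holomorphic function on $V^0$. As the $u_i$ are real, ${\rm Re}\,g_u=\sum_i u_i x_i$, which is constant along $V^0_\sigma$ by Lemma \ref{leaffine}; restricting $g_u$ to the complex submanifold $V^0_\sigma$ thus gives a holomorphic function with locally constant real part, hence locally constant, as claimed. I do not expect any genuine obstacle here: the content that is not completely routine was already supplied by the proof of Lemma \ref{leker}, and the remaining input is the cheap real-part argument; the one place deserving a moment's attention is the identification of the level sets of $a_\cdot$ with those of the Betti map $f_\nu$, which is what lets the earlier analyticity computation be reused verbatim.
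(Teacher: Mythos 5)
Your proof is correct, but it inverts the logical order of the paper's own argument, so the two routes are worth contrasting. The paper first proves that the holomorphic functions $\sum_i u_i f_i$ are constant along $V^0_\sigma$ by rerunning the differentiation argument of Lemma \ref{leaffine} with the holomorphic identity (\ref{eqpourdf1}) in place of its real part (\ref{eqpourdfreel}): the function $f-\sum_i a_i f_i$ descends to $\Sigma$, and differentiating along a lift of $u_\cdot$ shows that $\sum_i u_i f_i$ has vanishing differential along the fiber; it then deduces that $V^0_\sigma$ is a complex submanifold because these holomorphic functions are constant on it while their real parts already cut it out as a real submanifold. You instead establish the complex structure first, by exhibiting $V^0_\sigma=a_\cdot^{-1}(\lambda_\cdot)$ as the zero locus of the single holomorphic section $\tilde\nu-\sum_i\lambda_i\overline e_i$ of $\mathcal H_{0,1}\cong\Omega_{U'}$ (using that the $a_i-\lambda_i$ are real and the $df_{i,b}$ are $\mathbb{R}$-independent) --- precisely the mechanism of the proof of Lemma \ref{leker}, which is what the sentence preceding the lemma (``partially proved in the proof of Lemma \ref{leker}'') alludes to --- and you then get the constancy of $\sum_i u_if_i$ essentially for free, from the constancy of its real part on a connected complex manifold. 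What your route buys is that the second assertion becomes a one-line consequence of the first together with Lemma \ref{leaffine}; what the paper's route buys is that it produces directly the right number of holomorphic equations whose kernels of differentials coincide with the (complex, by Lemma \ref{leker}) tangent spaces of the fiber. Both versions rely on the same small regularity fact --- a real(-analytic) submanifold that is also a complex analytic set, equivalently here one whose tangent spaces are complex subspaces, is a complex submanifold --- which neither you nor the paper spells out, so I do not count it against you; you might just note explicitly that everything in sight is real analytic, so no appeal to Newlander--Nirenberg is needed.
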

\begin{proof} The fact that the functions $\sum_iu_i f_i$ are constant along $V^0_\sigma$ for any $u_\cdot\in T_{\Sigma,\sigma}$
is proved exactly as the previous lemma, replacing (\ref{eqpourdfreel}) by (\ref{eqpourdf1}). This implies that $V^0_\sigma$ is a complex submanifold of $V^0$ since
the real parts of these functions define $V^0_\sigma$ as a real submanifold of $V^0$ by Lemma \ref{leaffine}.
\end{proof}

 Theorem \ref{theomain} when the fibration is isotrivial works in any dimension.
 It will be obtained
as a consequence of the following Proposition \ref{prosubstitute}.
Let $\phi:A\rightarrow B$ be a locally isotrivial Lagrangian torus fibration,
where $A,\,B$ are quasiprojective and
$A$  has a generically nondegenerate closed holomorphic $2$-form
$\sigma_A$ extending to a projective completion $\overline{A}$ and generating
$H^{2,0}(\overline{A})$. Let $\nu: B\rightarrow A$ be an algebraic section such that $\nu^*\sigma_A=0$.

\begin{prop} \label{prosubstitute}
If the torsion points of $\nu$ are not dense in $B^0$ for the usual topology, either the section
$\nu$ is locally constant, or
the local system
$$H_{1,\mathbb{R},A}:= (R^1\phi_*\mathbb{R})^*$$
admits a proper nontrivial local subsystem $L$ which underlies a real subvariation of Hodge structures.
Furthermore, writing the real lift (see Section \ref{sec1}) of the section $\nu$ as a sum $\tilde{\nu}_{\mathbb{R},L}+\tilde{\nu}_{\mathbb{R},L^{\perp}}$, where
$L^{\perp}$ is the orthogonal variation of Hodge structures with respect to $\omega$, then
$\tilde{\nu}_{\mathbb{R},L}$ is a locally constant section of $L$.
\end{prop}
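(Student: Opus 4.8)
The plan is to reduce the isotrivial case to a statement about an algebraic morphism $\bar\nu$ to a \emph{fixed} abelian variety, and then to produce the subsystem $L$ from the geometry of its image. By Proposition \ref{ledense} and its easy converse (if the torsion points are not dense, then $f_\nu$ is nowhere of maximal rank --- otherwise density follows from real-analyticity as in the proof of \ref{ledense}(i)), together with Lemma \ref{leker}, the Betti map $f_\nu$ has constant even rank $2n-2k$ on a dense Zariski open subset of $B^0$. If $k=n$ then $f_\nu$ is locally constant, the real lift $\tilde\nu_{\mathbb{R}}$ is flat, and $\nu$ is locally constant: this is the first alternative. Assume $1\le k\le n-1$. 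After a finite \'etale base change --- harmless, since the subsystem $L$ produced below will be canonically attached to $\nu$, hence Galois-invariant, hence will descend --- local isotriviality trivialises $H_{1,\mathbb{Z},A}$ and makes $\mathcal{H}_{1,0}\subset\mathcal{H}_1$ a constant subbundle. Then $\Omega_B\cong\mathcal{H}_{0,1}$ is trivial, $A\cong A_0\times B$ for a fixed abelian variety $A_0$, $\nu$ is the graph of a morphism $\bar\nu\colon B\to A_0$, $\sigma_A$ is the canonical $2$-form of $A=\Omega_B/\Lambda$, and $\tilde\nu_{\mathbb{R}}$ is $\bar\nu$ followed by the real-analytic isomorphism $A_0\cong H_1(A_0,\mathbb{R})/H_1(A_0,\mathbb{Z})$. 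In the Donagi--Markman normal form (Theorem \ref{theodonagi}) the potential is quadratic ($g^{(3)}=0$), so locally $\bar\nu=\nabla f$ for a holomorphic potential $f$ with $df=\alpha$ (Lemma \ref{leclosed}), and ``$f_\nu$ of rank $2n-2k$'' becomes the constant-rank degenerate complex Monge--Amp\`ere condition $\operatorname{rk}\operatorname{Hess}(f)\equiv n-k$.

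\emph{Geometric reformulation.} By Lemmas \ref{leaffine} and \ref{lefibrecomp} the fibres of $f_\nu$, equivalently of $\bar\nu=\nabla f$, are complex affine $k$-planes in the flat coordinates along which $\tilde\nu_{\mathbb{R}}$ is constant. It therefore suffices to prove that $\bar\nu(B)$ lies in a translate $A_1+a_0$ of a proper abelian subvariety $A_1\subsetneq A_0$. Granting this, set $L:=H_1(A_1,\mathbb{Q})^{\perp_\omega}$, so that $L^{\perp}=H_1(A_1,\mathbb{Q})$; by Poincar\'e reducibility $H_1(A_0,\mathbb{Q})=L\oplus L^{\perp}$ as polarised Hodge structures, $L$ is proper and nontrivial because $0<k<n$, it descends to a sub-local-system of $H_{1,\mathbb{R},A}$ underlying a locally constant real subvariation of Hodge structures, and $\operatorname{pr}_L\bar\nu\equiv\operatorname{pr}_L(a_0)$ since $\operatorname{pr}_L$ annihilates $L^{\perp}=H_1(A_1,\mathbb{Q})$; hence $\tilde\nu_{\mathbb{R},L}$ is locally constant, as required.

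\emph{The image of $\bar\nu$.} Since $\nu$ is algebraic and $A_0$ carries no rational curves, $\bar\nu$ extends to a morphism $\overline{B}\to A_0$ from a projective completion, which factors through the Albanese map of $\overline{B}$; because $\Omega_B$ is trivial (globally generated by the closed forms $df_i$) this Albanese map is a submersion onto its image. Let $A_1$ be the smallest abelian subvariety with $Y:=\overline{\bar\nu(B)}$ in a coset of it; one must show $\dim Y=\dim A_1$. As $\bar\nu$ is nowhere of maximal rank, $\dim Y\le n-1$, so the remaining possibility is that the Ueno fibration of $Y$ has a base $Y_0$ of positive dimension and general type. This is excluded by the global hypotheses: running the correspondence argument of the proof of Lemma \ref{lelag} (the Zariski closure $\overline\Gamma\subset\widetilde B\times X$ and $\sigma=(i_H\circ i_Z)^{*}\sigma_A$), a positive-dimensional general-type quotient $Y_0$ would produce holomorphic $2$-forms on a desingularisation of $\overline{A}$ other than multiples of $\sigma_A$, contradicting that $\sigma_A$ generates $H^{2,0}(\overline{A})$.

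\emph{Main obstacle.} The last step is the crux. The degenerate complex Monge--Amp\`ere equation admits analytic solutions whose leaf directions rotate (e.g. $f=z_1\log(z_2/z_1)$), so the conclusion is false \emph{locally}: it genuinely requires that $B$ is quasi-projective, that $\nu$ is an honest morphism, and that $h^{2,0}(\overline{A})=1$. Promoting the local affine foliation to a global flat $\mathbb{Q}$-sub-Hodge-structure --- equivalently, ruling out such rotating configurations --- is exactly the difficulty which, in the non-isotrivial setting, already obstructs the analysis in dimension $10$ (cf. Proposition \ref{prodim10}).
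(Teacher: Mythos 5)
Your reduction to the algebraic morphism $\bar\nu\colon B'\to A_0$ and your identification of what must actually be proved (that the image of $d\tilde{\nu}_{\mathbb{R}}$ lies in a fixed proper rational subspace, equivalently that the affine leaves do not ``rotate'') are both correct and match the skeleton of the paper's argument. But the step you yourself flag as the crux is genuinely missing, and the mechanism you propose for it does not work as stated. You want to show that $Y=\overline{\bar\nu(B')}$ lies in a coset of a proper abelian subvariety by excluding a positive-dimensional general-type base $Y_0$ of the Ueno fibration via the hypothesis $h^{2,0}(\overline{A})=1$. Two concrete problems: (1) if $\dim Y_0=1$, i.e.\ $Y_0$ is a curve of genus $\geq 2$ in the quotient abelian variety, it carries no holomorphic $2$-forms, so no contradiction with $h^{2,0}$ can be extracted this way; (2) even when $\dim Y_0\geq 2$, so that $\Omega^2_{Y_0}$ does have sections (being a quotient of a trivial bundle), these give $2$-forms on $\overline{B'}$, and you give no argument transporting them to $\widetilde{B}$ or to $\overline{A}$ across the finite cover $B'\to B^0$ (a trace can vanish), which is where the hypothesis actually lives. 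So the decisive global input is asserted rather than supplied.

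The paper closes exactly this gap by a different and more elementary route, which never analyzes the image $Y\subset A_0$. It shows (Lemma \ref{lealgebraic30sept}) that the fibres of the Betti map are algebraic, being fibres of $\bar\nu$, as you also note; (Lemma \ref{leflatholo}) that the local flat affine structure on $B^0$ is induced by $n$ holomorphic $1$-forms defined on all of a smooth projective completion $\overline{B'}$ --- this is where the global hypotheses enter, through the K\"unneth position of $H^2(\overline{A})_{tr}$ and Deligne's strictness theorem for morphisms of mixed Hodge structures; and then (Corollary \ref{coroconstant}) that in a connected algebraic family of subvarieties of $\overline{B'}$ the space of global holomorphic $1$-forms vanishing on the members is constant, so the affine complex leaves are parallel. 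Parallelism of the leaves is precisely the ``no rotation'' statement you could not establish, and it yields at once the flat subsystem $L$ and the local constancy of $\tilde{\nu}_{\mathbb{R},L}$, since the image of $d\tilde{\nu}_{\mathbb{R}}$ is the annihilator of the (now constant) tangent space to the leaves. You should replace your Ueno-fibration step by this argument, or else give an actual proof that $Y-Y$ generates a proper abelian subvariety.
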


\begin{proof}
The local data described in Section \ref{subsec}
take the following form: there are
locally $2n$ holomorphic functions $f_1,\ldots, f_{2n}$ on simply connected open subsets $U\subset B$, which are independent over $\mathbb{R}$ and provide real coordinates $x_1,\ldots, x_{2n}$, $x_i={\rm Re}\,f_i$. By isotriviality,
  modulo the constants, the $f_i$'s provide only
$n$ independent holomorphic functions independent over $\mathbb{C}$.
 In particular  the open set $B^0$ over which $\phi$ is smooth and $\sigma_A$ is nondegenerate is endowed with a flat holomorphic structure.
\begin{lemm}\label{leflatholo}
This flat holomorphic structure, after passing to a generically finite cover
$B'$ of  $B^0$, is induced by the choice of $n$ holomorphic $1$-forms on
 a smooth projective completion $\overline{B'}$  of $B'$.
\end{lemm}
\begin{proof} Indeed, the fibration $\phi:A\rightarrow B$ is trivialized over
a finite cover
$B'$ of a Zariski open set of $B^0$ as
$$A':=A\times_{B}B'\cong B'\times J_0,$$
where $0\in B'$ is a given point, and $J_0$ is the fiber of $\phi':A'\rightarrow B'$ over it.
The local flat holomorphic coordinates on $B^0$, pulled back to $B'$, come
from the holomorphic $2$-form on $B'\times J_0$ which is of the form
$$\sum_i pr_{B'}^*\alpha_i\otimes pr_{J_0}^*\beta_i,$$
for a basis $\beta_i$ of $H^0(\Omega_{J_0})$. The flat structure on $B^0$ is given by
the holomorphic forms $\alpha_i$.
We now claim that the forms $\alpha_i$ extend  to
holomorphic $1$-forms on $\overline{B'}$. This is seen as follows: As the fibration is Lagragian and
$\sigma_A$ generates $H^{2,0}(\overline{A})$, the transcendental cohomology
$H^2(\overline{A},\mathbb{Q})_{tr}$, pulled-back to $B'\times J_0$, falls in the
$(1,1)$-K\"unneth component
$H^1(B',\mathbb{Q})\otimes H^1(J_0,\mathbb{Q})$ of the K\"unneth decomposition of
$H^2(B'\times J_0,\mathbb{Q})$, and this implies that it is contained in the
image of $H^1(\overline{B'},\mathbb{Q})\otimes H^1(J_0,\mathbb{Q})$
in $H^1(B',\mathbb{Q})\otimes H^1(J_0,\mathbb{Q})$ by Deligne's strictness theorem for
morphisms of mixed Hodge structures \cite{deligne}): indeed,
we get from the above a morphism of mixed Hodge structures
$$ \alpha: H^2(\overline{A},\mathbb{Q})_{tr}\otimes H^1(J_0,\mathbb{Q})^*\rightarrow H^1(B',\mathbb{Q})$$
with image $K\subset H^1(B',\mathbb{Q})$ such that
$H^2(\overline{A},\mathbb{Q})_{tr}$, pulled-back to $B'\times J_0$ is contained in
$K\otimes H^1(J_0,\mathbb{Q})$. By Deligne strictness theorem \cite{deligne}, this morphism $\alpha$ has its
image contained in the pure (smallest weight) part of $H^1(B',\mathbb{Q})$, namely
$H^1(\overline{B'},\mathbb{Q})$.
This finishes the proof.
\end{proof}
As a consequence, we get the following:
\begin{coro} \label{coroconstant} Let $(F_t)_{t\in \Sigma}$ be a continuous family of
complex submanifolds of an open set $V^0$ of $B^0$.
Assume that each $F_t$ is flat for the flat holomorphic structure described above, and is
algebraic, that is, a connected component of the intersection of
an algebraic subvariety $C_t$ of $B$ with $V^0$.
Then the $F_t$'s are parallel, that is, $F_t$ being affine in the flat coordinates,
the linear part of their defining equations is constant.

\end{coro}
\begin{proof} The linear part of the defining equations for $F_t$ is determined, using Lemma
\ref{leflatholo}, by the vector space of  holomorphic forms on $\overline{B'}$ vanishing on
$F_t$, or equivalently on $C_t$. On the other hand, by desingularization and
up to shrinking our family, we can assume that the $C_t$ form a family of smooth
projective varieties mapping to $\overline{B'}$. It is then clear that the space
of holomorphic forms on $\overline{B'}$ vanishing
 on $C_t$ does not depend on $t$.
\end{proof}

The proof of Proposition  \ref{prosubstitute}   will finally use the following Lemma \ref{lealgebraic30sept}.
Consider our locally isotrivial fibration
$\phi:A\rightarrow B$ and its  section
$\nu$, which  satisfies the condition $\tilde{\nu}^*\sigma_A=0$.
This section
lifts locally on $B^0$ to a   section $\tilde{\nu}_M$ of the sheaf
$H_{1,\mathbb{R}}\otimes \mathcal{C}^\infty_\mathbb{R}$ where
$H_{1,\mathbb{R}}$ is the
the real local system $(R^1\phi_*\mathbb{R})^*$.
This section is well-defined up to translation by a constant (integral) section of
$H_{1,\mathbb{R}}$, and by Proposition \ref{ledense}, if the torsion points of
$\nu_M$ are not dense in $B^0$ for the usual topology, then
the map (defined on simply connected open sets $V\subset B^0$)
$$\tilde{\nu}_M: V\rightarrow H_1(J_0,\mathbb{R})$$
via a trivialization of $H_{1,\mathbb{R}}$ has positive dimensional fibers
$F_t$.
\begin{lemm}\label{lealgebraic30sept} In the isotrivial case, the fibers $F_t$ are
 algebraic.
\end{lemm}

\begin{proof} Indeed, after passing to the finite cover $B'$ already introduced above,
the normal function $\nu_M$ gives a morphism of algebraic varieties
$$\nu'_M:{B'}\rightarrow {\rm Pic}^0(J_0)\cong_{isog} J_0,$$
which to $b'\in B'$ associates $M_{\mid X'_{b'}}$ with $X'_{b'}\cong J_0$ canonically.
It is imediate that the fibers of $\tilde{\nu}_M$ coincide with the fibers of $\nu'_M$, via the
morphism $B'\rightarrow B^0$ and after restricting to the adequate open sets.
\end{proof}

We can now apply Corollary \ref{coroconstant} to the fibers $V^0_\sigma\subset V^0$ of the local map $a_\cdot$ restricted to the open subset $V^0\subset V$ where $a_\cdot$ has constant rank. These fibers
 are both complex and affine by Lemmas \ref{leaffine} and \ref{lefibrecomp}.
We thus conclude that they vary in a parallel way. The tangent bundle to the fibers is thus a local subsystem of
$H_{1,\mathbb{R}}$, and as the fibers are complex submanifolds, this local system is a subvariation of Hodge structure.
This subvariation of Hodge structures is nontrivial by assumption. Saying that it is equal to
$H_{1,\mathbb{R}}$ is equivalent to saying that $\nu$ is locally constant. This finishes the proof.
\end{proof}
\begin{proof}[Proof of Theorem \ref{theomain} in the isotrivial case] Let $\phi:X\rightarrow B$ be a Lagrangian isotrivial fibration, $X$  projective hyper-K\"ahler, and let $\nu_M$ be the normal function associated to a line bundle $M$ on $X$ which is topologically trivial on the smooth fibers
of $\phi$. Proposition \ref{prosubstitute}
applies and shows that if the set of points $b\in B$ where $\nu_M(b)$ is a torsion point is not dense in $B$,
then either the normal function is locally constant or there is a proper nontrivial real subvariation of Hodge structure
$L$
of $H_{1,\mathbb{R}}$. We now use
 Lemma \ref{lenovhs} which says
 that the second  case is excluded and
thus $\tilde{\nu}$ is locally constant, so that $\nu_M$ is a section
of $R^1\phi_*\mathbb{R}/R^1\phi_*\mathbb{Z}$. But then Lemma \ref{letorsionmono} says that
a nontorsion section $R^1\phi_*\mathbb{R}/R^1\phi_*\mathbb{Z}$ can exist only if the
invariant part of $H^1(X_b,\mathbb{R})$ is not trivial, which provides a contradiction by Deligne's global invariant cycles theorem since $H^1(X,\mathbb{R})=0$.
\end{proof}

\section{The case of dimension $10$ \label{secn5}: proof of Proposition \ref{prodim10}}
We prove in this section  Proposition \ref{prodim10}. We start with the proof of (i).
We consider a projective hyper-K\"ahler $10$-fold $X$   admitting a Lagrangian fibration $\pi:X\rightarrow B$ with maximal variation. Let $M\in {\rm Pic}\,X$ be cohomologous to $0$ on the fibers $X_b$.  We assume that the associated normal function $\nu_M:B^0\rightarrow {\rm Pic}\,X^0/B^0$ does not have  a dense set of torsion points in $B^0$. We use the induced Lagrangian fibration structure on the dual fibration ${\rm Pic}\,(X^0/B^0)\rightarrow B^0$ discussed in Subsection
\ref{secpetitechose}, that we denote by $A\rightarrow B^0$.
We perform the same analysis as in Section \ref{sec2}. By Theorem
\ref{theodonagi}, there are locally on $B^0$ holomorphic coordinates
 $z_1,\ldots, z_5$ and a holomorphic function (a potential)
$g$ with the following properties:
Let $g_i=\frac{\partial g}{\partial z_i}$. Then the
holomorphic forms $dz_1,\ldots,\,dz_5,\,dg_1,\ldots,\,dg_5$ are pointwise independent over $\mathbb{R}$ and
the real variation of Hodge structure on $H_{1,A}$ is given by the evaluation map
$${\rm ev}:\mathbb{R}^{10}\otimes \mathcal{O}_B\rightarrow \Omega_B,\,\,e_i\mapsto dz_i,\,\,e_{i+5}\mapsto dg_i,\,\,i=1,\ldots,\,5.$$
Thus the infinitesimal variation of Hodge structure
$$T_{B}\rightarrow {\rm Hom}\,(\mathcal{H}_{1,0},\mathcal{H}_{0,1})$$
is given, using the natural identifications
$T_B\cong \mathcal{H}_{0,1}^*\cong  \mathcal{H}_{1,0}$,
 by the homogeneous cubic form $g^{(3)}$ on $T_B$ determined  at each point by the partial derivatives
 $\frac{\partial ^3g}{\partial z_i\partial z_j\partial z_k}$.
 The maximal variation assumption says that the cubic form $g^{(3)}_b$ is not a cone at a general point $b$,
 while the non-density assumption for the torsion points implies by the Andr\'e-Corvaja-Zannier theorem \ref{theoandre} that
 it has the property that all of its partial derivatives are degenerate quadrics.
 For cubic forms in $5$ variables, such cubics are classified in \cite{lossen}, who proves that such a cubic form on $\mathbb{P}^4$ has to be singular along a plane
 (and  conversely, if  a cubic in $\mathbb{P}^4$  is singular along a plane, then  all of its partial derivatives  vanish along a plane, hence are degenerate quadrics).
 We now prove the following result:
 \begin{lemm}\label{ledecompaffine}  Let $g(z_1,\ldots,z_5)$ be a holomorphic function
  defined on an open set
  $U$ of $\mathbb{C}^5$ with the property that
  at any point $b\in U$, the cubic form $g^{(3)}_b$ is not a cone and is singular along a
  projective plane $P_b\subset \mathbb{P}(T_{U,b})$. Then
  there is locally on $U$ a holomorphic map $\phi:U\rightarrow \mathbb{C}^2$
  with  fibers  $S_b$ affine in the coordinates $z_i$, such that the partial derivatives
  $\frac{\partial g}{\partial z_i}$ are affine on each $S_b$. In particular,
  $g$ restricted to the fibers $S_b$ is quadratic.
  \end{lemm}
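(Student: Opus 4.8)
The plan is to exploit the algebraic structure forced on $g$ by the hypothesis that every cubic $g^{(3)}_b$ is singular along a projective plane $P_b \subset \mathbb{P}(T_{U,b})$. First I would set up the affine-cone picture: at each $b$, the singular locus ${\rm Sing}(g^{(3)}_b)$ contains a $3$-dimensional linear subspace $W_b \subset T_{U,b} = \mathbb{C}^5$ (the affine cone over $P_b$). The key local algebraic observation is that since $g^{(3)}_b$ is a cubic vanishing to order $2$ along the linear subspace $W_b$, after a linear change of coordinates putting $W_b = \{z_1 = z_2 = 0\}$ one can write $g^{(3)}_b$ as a cubic in which every monomial has degree $\geq 2$ in $z_1, z_2$; equivalently $g^{(3)}_b \in (z_1,z_2)^2$ in the polynomial ring, so $g^{(3)}_b = z_1^2 \ell_1 + z_1 z_2 \ell_2 + z_2^2 \ell_3$ for linear forms $\ell_i$. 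From this the partial derivatives $\partial g^{(3)}_b / \partial z_j$ all lie in $(z_1,z_2)$, i.e.\ vanish along $W_b$ — this is the converse remark quoted from \cite{lossen} — and the structure is essentially rigid.

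Next I would globalize the subspace $W_b$ to a holomorphic distribution on $U$. The assignment $b \mapsto P_b$ (equivalently $W_b$) is algebraic/holomorphic in $b$ because ${\rm Sing}(g^{(3)}_b)$ is cut out by the partial derivatives of $g$ at $b$, which depend holomorphically on $b$, and because the hypothesis says this singular locus contains a (necessarily unique, by the not-a-cone assumption, or at least generically unique) plane. So $W := \{W_b\}$ is a rank-$3$ holomorphic subbundle of $T_U$, at least over a dense open set; I would restrict to that open set. The heart of the argument is then to show $W$ is integrable and that its integral leaves $S_b$ are affine (linear) in the $z$-coordinates — this is where the hypothesis on $g$, not just on $g^{(3)}$, must enter. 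The mechanism: the condition that $\partial g / \partial z_j$ restricted to a leaf is affine means precisely that the $W$-directional third derivatives of $g$ vanish, i.e.\ $g^{(3)}_b(u,v,w) = 0$ whenever $u \in W_b$ and $v, w$ arbitrary — wait, more carefully, $g^{(3)}_b(u,v,\cdot)=0$ for $u,v \in W_b$, which is exactly $g^{(3)}_b \in {\rm Sym}$-ideal generated by a $2$-codimensional complement, matching the "$\in (z_1,z_2)^2$" description above. So the pointwise hypothesis already gives the second-derivative statement $\nabla^2_W \nabla g \subset$ (something), and I would feed this back: differentiating the identity $g^{(3)}_b(u,v,w)=0$ ($u,v \in W_b$) in a $W$-direction, using that $W$ is defined by vanishing of these very derivatives, should yield a closed first-order (Frobenius-type) system forcing both integrability of $W$ and flatness of the leaves. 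Concretely I expect to choose, near a fixed point, coordinates adapted so that $W$ is spanned by $\partial/\partial z_3, \partial/\partial z_4, \partial/\partial z_5$ to first order, show the $O(z^2)$-corrections are killed by the PDE, and conclude $W = \ker(dz_1) \cap \ker(dz_2)$ after a holomorphic (indeed affine, in suitable coordinates) change; then $\phi := (z_1, z_2)$ is the desired map and the fibers $S_b$ are the flat leaves.

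Having produced $\phi: U \to \mathbb{C}^2$ with flat affine fibers $S_b$ and with each $\partial g/\partial z_i$ affine along $S_b$, the final clause — that $g|_{S_b}$ is quadratic — is immediate: the restriction of $g$ to the affine subspace $S_b$ has all first partials (in directions tangent to $S_b$) affine functions on $S_b$, hence $g|_{S_b}$ is a polynomial of degree $\leq 2$ in affine coordinates on $S_b$. The main obstacle I anticipate is the middle step: promoting the purely pointwise, $3$-jet-level hypothesis (each $g^{(3)}_b$ singular along a plane) to the \emph{integrability} of the plane field $W$ and the \emph{flatness} of its leaves. The pointwise hypothesis alone does not obviously close up — one genuinely needs to differentiate the singularity condition and check that the obstruction terms (the failure of $[\,W,W\,] \subset W$, and the quadratic part of the leaves) are themselves controlled by lower-order data already constrained by the hypothesis. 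I would expect this to reduce, after choosing good coordinates, to verifying that a specific bilinear "second fundamental form" type tensor vanishes, which should follow from the symmetry $\partial^3 g/\partial z_i \partial z_j \partial z_k$ being totally symmetric together with the membership $g^{(3)}_b \in (z_1,z_2)^2$ holding at \emph{every} $b$, not just one. Once that rigidity is in hand the rest is bookkeeping.
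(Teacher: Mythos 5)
Your setup (the affine cone $W_b$ over $P_b$, the membership $g^{(3)}_b\in I(W_b)^2$, holomorphy and uniqueness of $b\mapsto P_b$) and your final step (affine gradient along an affine fiber implies $g|_{S_b}$ quadratic) agree with the paper. But the middle step --- which you yourself flag as ``the main obstacle'' --- is where the whole proof lives, and you leave it at the level of ``I would expect this to reduce to\dots'' and ``should follow from\dots''. That is a genuine gap: nothing in the proposal actually shows that the plane field $W$ is integrable, and, more importantly, nothing shows that its leaves are \emph{affine} in the coordinates $z_i$ with $\partial g/\partial z_i$ affine on them. Integrability of the distribution alone would not give affine leaves, and a single differentiation of the pointwise $3$-jet condition (your ``Frobenius-type system'') is not obviously sufficient to kill the relevant obstruction.

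The mechanism the paper uses, and which is absent from your plan, is an induction on the order of the Taylor expansion of $g$ at $b$. Writing $g=q_b+\sum_{k\ge3}g^{(k)}_b$ in the variables $X=z-b$, one has the elementary identity $\partial g^{(k)}_b/\partial b_i=\pm\,\partial g^{(k+1)}_b/\partial X_i$; since $g^{(k)}_b$ vanishes to order $2$ along the holomorphically varying $W_b$, its $b$-derivative still vanishes on $W_b$, hence $\partial g^{(k+1)}_b/\partial X_i$ vanishes on $W_b$, i.e.\ $g^{(k+1)}_b$ is again singular along $P_b$. By induction \emph{all} $g^{(k)}_b$, $k\ge3$, lie in $I(W_b)^2$, so $g-q_b$ vanishes doubly along the affine subspace $S_b=b+W_b$. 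This one statement delivers everything at once: re-expanding at any $b'\in S_b$ shows $g^{(3)}_{b'}$ is singular along the same plane, so $P_{b'}=P_b$ by uniqueness (integrability with genuinely affine leaves, no Frobenius computation needed), and differentiating $g=q_b+g'_b$ along $S_b$ gives $\partial g/\partial z_i=\partial q_b/\partial z_i$ there, which is the required affineness of the gradient. To salvage your route you would have to exhibit the vanishing of your ``second fundamental form'' tensor explicitly; the Taylor-coefficient identity above is the clean way to do it, and it is precisely the idea your proposal is missing.
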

 \begin{proof} Let us consider the Taylor expansion of $g$ in the coordinates $z_i$ centered at $b\in B$:
 \begin{eqnarray}\label{eqtayor} g=q_b+g^{(3)}_b+g^{(4)}_b+...,
 \end{eqnarray}
 where $q_b$ is quadratic nonhomogeneous in the coordinates, and  the homogeneous forms $g^{(k)}_b$ in the variables $X_i=z_i-b$ depend polynomially on $b$ for $k\geq 3$. We observe that
 \begin{eqnarray}\label{eqtayor1}\frac{\partial g^{(3)}_b}{\partial z_i}=\pm \frac{ \partial g^{(4)}_b}{\partial X_i}.
 \end{eqnarray}
By assumption, the cubic form $g^{(3)}_b$ is singular along a projective plane
$P_b\subset \mathbb{P}^4$ which is uniquely determined by $b$ hence varies holomorphically with $b$ (indeed, otherwise the cubic form $g^{(3)}_b$  is a cone, as one checks easily).
It follows that $\frac{\partial g^{(3)}_b}{\partial z_i}$ vanishes on $P_b$, which means
by (\ref{eqtayor1}) that the cubic $\frac{\partial{g^{(4)}_b}}{\partial X_i}$ vanishes on $P_b$. Hence the quartic $g^{(4)}_b$ is also singular along $P_b$. We can then continue and show inductively that
all $g^{(k)}_b$ for $k\geq3$ are singular along $P_b$.
In conclusion, fixing $b$, we have by (\ref{eqtayor})
\begin{eqnarray}\label{eqtayor2}g=q_b+g'_b,
 \end{eqnarray}
where $q_b$ is  quadratic and  $g'_b $ vanishes doubly along the affine $3$-space  $S_b:=\mathbb{C}^3\cap U\subset U$ corresponding to $P_b$.
This is saying that the map  $b\mapsto P_b$ is constant along $S_b$, which provides us with a foliation  on $U$ with affine fibers $S_b$, with the property that for each $b$,
(\ref{eqtayor2}) holds along $S_b$ with $q_b$ constant along $S_b$. Differentiating (\ref{eqtayor2}) along
$S_b$ gives $\frac{\partial g}{\partial z_i}=\frac{\partial q_b}{\partial z_i}$ along $S_b$ which proves
that $\frac{\partial g}{\partial z_i}$ is affine on $S_b$.
\end{proof}
 Lemma \ref{ledecompaffine} tells us that for our data $(X,B,d)$ as above,
$B^0$ (or rather the open set where the cubic is not a cone) is foliated by the locally defined fibers $S_b$, which clearly are both
complex submanifolds and affine submanifolds of $B^0$ for the real affine structure. Indeed, the partial derivatives $g_i:=\frac{\partial g}{\partial z_i}$ are affine along $S_b$, so that
each $S_b$ maps to an affine $\mathbb{C}^3\subset \mathbb{C}^{10}$ via the functions $z_1,\ldots,\,z_5,\,g_1,\ldots,\, g_5$, hence to a real affine subspace
$\mathbb{R}^6\subset \mathbb{R}^{10}$ via the real  coordinates
${\rm Re}\,z_1,\ldots,\,{\rm Re}\,z_5,\,g_1,\ldots, \,{\rm Re}\,g_5$.

The proof of Proposition \ref{prodim10} (i) concludes with the following :
 \begin{coro}\label{coroHSconst} Along the leaves $\mathcal{F}_b$ of the foliation,
 the real variation of Hodge structure is a direct sum
 $L_{b,\mathbb{R}}\oplus L'_{b,\mathbb{R}}$, where the real Hodge structure is locally constant on the first summand.
 \end{coro}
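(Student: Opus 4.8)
\textbf{Proof proposal for Corollary \ref{coroHSconst}.} The plan is to produce, along each leaf $\mathcal{F}_b=S_b$, an explicit \emph{constant} subspace of the ($10$-dimensional, locally trivialized) local system $H_{1,\mathbb{R}}$ which underlies a weight-$1$ sub-Hodge structure of rank $6$, and then to let $L'_{b,\mathbb{R}}$ be its orthogonal complement for the polarization $\langle\,,\,\rangle$ of (\ref{eqHR}). By Lemma \ref{ledecompaffine}, $S_b$ is a complex $3$-dimensional affine submanifold of $U$ in the flat coordinates $z_1,\dots,z_5$, so it has a \emph{constant} tangent space $W\subset T_{B,b'}$ along $S_b$; one fixes affine coordinates $t_1,t_2,t_3$ on $S_b$.

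The key step is the constancy of a kernel. On $B$ the evaluation map reads $\mathrm{ev}\colon\mathbb{C}^{10}\otimes\mathcal{O}_B\to\Omega_B$, $e_i\mapsto dz_i$, $e_{i+5}\mapsto dg_i$ (where $g_i=\partial g/\partial z_i$, $i=1,\dots,5$), with $\mathcal{H}_{1,0}=\ker\,\mathrm{ev}$. Composing with the surjection $\Omega_B|_{S_b}\to\Omega_{S_b}$ yields a surjection $\mathrm{ev}_{S_b}\colon\mathbb{C}^{10}\otimes\mathcal{O}_{S_b}\to\Omega_{S_b}$, and I would argue that $\mathrm{ev}_{S_b}$ is given by a constant matrix in the frame $dt_1,dt_2,dt_3$: indeed $z_i|_{S_b}$ is affine in the $t_j$ because $S_b$ is an affine subspace, and $g_i|_{S_b}$ is affine in the $t_j$ by the conclusion of Lemma \ref{ledecompaffine}; hence $dz_i|_{S_b}$ and $dg_i|_{S_b}$ have constant coefficients. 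Therefore $K:=\ker\,\mathrm{ev}_{S_b}\subset\mathbb{C}^{10}$ is a constant subspace of dimension $10-3=7$, and $\mathcal{H}_{1,0,b'}=\ker\,\mathrm{ev}_{b'}\subseteq K$ for every $b'\in S_b$.

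The decomposition then follows by linear algebra. The form $\langle\,,\,\rangle$ is a constant nondegenerate skew form on $\mathbb{R}^{10}$, and $\mathcal{H}_{1,0,b'}$ is Lagrangian for it by (\ref{eqHR}) and a dimension count; hence $K^{\perp}\subseteq\mathcal{H}_{1,0,b'}^{\perp}=\mathcal{H}_{1,0,b'}$, where $K^{\perp}$ is the $\langle\,,\,\rangle$-orthogonal of $K$, a constant subspace of dimension $3$. One sets $L_{b,\mathbb{C}}:=K^{\perp}\oplus\overline{K^{\perp}}$: the summands meet only in $0$ because $K^{\perp}\subseteq\mathcal{H}_{1,0,b'}$, $\overline{K^{\perp}}\subseteq\overline{\mathcal{H}_{1,0,b'}}$, and $\mathcal{H}_{1,0,b'}\cap\overline{\mathcal{H}_{1,0,b'}}=0$; so $L_{b,\mathbb{C}}$ is a constant subspace of dimension $6$, stable under complex conjugation, and $L_{b,\mathbb{R}}:=L_{b,\mathbb{C}}\cap\mathbb{R}^{10}$ is a constant real subspace with $L_{b,\mathbb{R}}\otimes\mathbb{C}=L_{b,\mathbb{C}}$. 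The same computation shows $L_{b,\mathbb{C}}\cap\mathcal{H}_{1,0,b'}=K^{\perp}$ exactly (if $x=u+v$ with $u\in K^{\perp}$, $v\in\overline{K^{\perp}}$ lies in $\mathcal{H}_{1,0,b'}$, then $v=x-u\in\mathcal{H}_{1,0,b'}\cap\overline{\mathcal{H}_{1,0,b'}}=0$), so $L_{b,\mathbb{R}}$ underlies a rank-$6$ sub-Hodge structure whose $(1,0)$-part $K^{\perp}$ is constant along $S_b$; that is, the real Hodge structure on $L_{b,\mathbb{R}}$ is locally constant. Finally $\langle\,,\,\rangle$ polarizes this sub-Hodge structure, hence is nondegenerate on $L_{b,\mathbb{R}}$, so $H_{1,\mathbb{R}}|_{S_b}=L_{b,\mathbb{R}}\oplus L'_{b,\mathbb{R}}$ with $L'_{b,\mathbb{R}}:=L_{b,\mathbb{R}}^{\perp}$ a rank-$4$ sub-variation of Hodge structure, the ranks $6$ and $4$ being forced by $\dim K^{\perp}=3$.

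The only genuine input is the constancy of $K$, which rests entirely on Lemma \ref{ledecompaffine} (that each $\partial g/\partial z_i$ restricts to an affine function on the leaves); granting that, the output is forced by the elementary fact that a constant subbundle containing a moving Lagrangian subbundle must contain its own constant orthogonal, which then lies inside the Lagrangian and supplies a constant piece of the Hodge filtration. I expect no further obstacle.
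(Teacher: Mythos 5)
Your proof is correct, and it produces exactly the same decomposition as the paper: your $L_{b,\mathbb{C}}=K^{\perp}\oplus\overline{K^{\perp}}=(K\cap\overline{K})^{\perp}$, and the paper's $L'_{b,\mathbb{R}}$ (defined as the four-dimensional space of real affine forms vanishing on $S_b$) is precisely $K\cap\mathbb{R}^{10}$, so the two constructions are orthogonal-complementary descriptions of one another. The difference is in the mechanism used to prove constancy of the Hodge structure on the rank-$6$ piece. The paper defines $L'$ first, takes $L=(L')^{\perp}$, and argues that the variation on $L$ is constant because the six real functions $\mathrm{Re}\,f_i$ cut out by $L$ restrict on $S_b$ to only a three-dimensional space of holomorphic (affine) functions, so the kernel of the restricted evaluation map is constant; this is terse and leaves implicit why that kernel coincides with $L_{\mathbb{C}}\cap\mathcal{H}_{1,0}$. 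You instead exhibit the constant $(1,0)$-part directly: the restricted evaluation matrix is constant by Lemma \ref{ledecompaffine}, so $K=\ker\mathrm{ev}_{S_b}$ is a constant coisotropic subspace containing the moving Lagrangian $\mathcal{H}_{1,0,b'}$, whence $K^{\perp}\subseteq\mathcal{H}_{1,0,b'}$ for every $b'\in S_b$. This use of the first Hodge--Riemann relation ($\mathcal{H}_{1,0}$ Lagrangian for the polarization) to trap a constant subspace inside the Hodge filtration is not made explicit in the paper, and it buys you a cleaner verification both that $L$ is a subvariation (via $L_{\mathbb{C}}\cap\mathcal{H}_{1,0,b'}=K^{\perp}$ exactly) and that its Hodge structure is literally constant. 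The only input in both arguments is the same: affineness of the $z_i$ and $g_i$ along the leaves, plus the flat polarization. No gap.
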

\begin{proof} By Lemma \ref{ledecompaffine}, the leaf $\mathcal{F}_b$, which identifies locally to   $S_b$, locally maps, via the real affine structure given by $({\rm Re}\,z_i,\,{\rm Re}\, g_i),\,1\leq i\leq 5$, to an open set of an affine subspace
$\mathbb{R}^6\subset \mathbb{R}^{10}$ which will define $L'_{b,\mathbb{R}}\subset
H_{1,\mathbb{R}\mid \mathcal{F}_b}$ as the four dimensional subspace of real affine forms vanishing on $S_b$. The decomposition is then obtained using the polarization, $L_{b,\mathbb{R}}$ being defined as the orthogonal complement of $L'_{b,\mathbb{R}}$. We have to prove that $L'_{b,\mathbb{R}}$ and $L_{b,\mathbb{R}}$ are   real
 subvariations of Hodge structure of $H_{1,\mathbb{R}\mid \mathcal{F}_b}$, and that
the  variation of Hodge structre on  $L_{b,\mathbb{R}}$ is  constant. The first point  follows from the fact that $S_b\subset U$ is also a complex submanifold of $U$ so that  the evaluation map
 $L'_{b,\mathbb{R}}\otimes_{\mathbb{R}}\mathcal{O}_B\rightarrow \Omega_U$
 maps to holomorphic $1$-forms vanishing on $S_b$, which is a rank $2$ vector subbundle of $\Omega_{U\mid S_b}$. The fact that the real variation of Hodge structure on $L_{b,\mathbb{R}}$ is  constant is due to the fact that it identifies  to the evaluation map
 $L_{b,\mathbb{R}}\otimes_{\mathbb{R}}\mathcal{O}_{S_b}\rightarrow \Omega_{S_b},\,\,e_i\mapsto df_i$,
 associated with a $6$-dimensional real
 vector space of    functions $f_i$ on $S_b$, where the $f_i$'s form only a $3$-dimensional
  space of holomorphic  functions on $S_b$ since $S_b$  maps via the $f_i$'s  to an open subset of  an affine $\mathbb{C}^3\subset \mathbb{C}^{10}$.
\end{proof}
\begin{proof}[Proof of Proposition \ref{prodim10}, (ii)] We only have to prove that the normal function
$\nu_M$ (or rather its lift $\tilde{\nu}_{M,0,1}\in \mathcal{H}_{0,1}$) decomposes locally along the leaves,  according to the decomposition of Corollary \ref{coroHSconst},
as $\nu_{M,0,1}^v+\nu_{M,0,1}^h$, where $\nu_{M,0,1}^v$ is constant.
We use the local description introduced in  section
\ref{secMA}: the lift $\tilde{\nu}_{M,0,1}\in \mathcal{H}_{0,1}\cong \Omega_B$ provides a holomorphic $1$-form which is closed hence locally exact
$df$. The variation of Hodge structure is given in local
coordinates $z_1,\ldots,z_5$ by the evaluation map
$$\mathbb{C}^{10}\otimes \mathcal{O}_B\rightarrow \Omega_B,$$
$$e_i\mapsto dz_i,\,\,e_{5+i}\mapsto dg_i,\,i=1,\ldots,\,5,$$
where $g_i=\frac{\partial g}{\partial z_i}$ and $g$ is as above.
Writing $df=\sum_i\frac{\partial f}{\partial z_i}dz_i$, the
affine bundle
$\mathcal{H}_{0,1,\nu}\subset\mathbb{C}^{10}\otimes \mathcal{O}_B$ introduced in 
(\ref{eqh10nu}) is the set of elements $\sum_{i=1}^5\frac{\partial f}{\partial z_i} e_i
+\sum_{i=1}^5\lambda_i (e_{i+5}-\sum_{j=1}^5g_{ij} e_j)$.
 We apply now Theorem \ref{theoandrevariant} which tells us that
the holomorphic map $B\times \mathbb{C}^5\rightarrow \mathbb{C}^{10}$
\begin{eqnarray}
\label{eqmap19dec}(b,\lambda_i)\mapsto \sum_{i=1}^5\frac{\partial f}{\partial z_i} e_i
+\sum_{i=1}^5\lambda_i (e_{i+5}-\sum_{j=1}^5g_{ij} e_j)
\end{eqnarray}
is nowhere of maximal rank. Computing its differential at a point
$(b,\lambda_\cdot)$, we conclude that the  quadric
$$f^{(2)}_b-\partial_{\lambda_\cdot}( g^{(3)}_b)$$
is degenerate on $T_{B,b}$ for all $\lambda_\cdot$.
As the cubic form $g^{(3)}_b$ at the general point $b$ is not a cone but is singular along the plane
$P_b\subset\mathbb{P}(T_{B,b})$ defined by two homogeneous equations $X_1,\,X_2$, the cubic form
has for equation $X_1^2A+X_2^2B+X_1X_2C$, where the linear forms
$A,\,B,\,C$ restrict to  independent linear forms  on $P_b$, hence the  general quadric $\partial_{\lambda_\cdot}( g^{(3)}_b)$ has rank $4$ and its vertex is a general point of $P_b$. The condition that $\mu f^{(2)}_b-\partial_{\lambda_\cdot} (g^{(3)}_b)$ is degenerate for any $\mu$  implies that the Hessian $f^{(2)}_b$ of $f$ at $b$ vanishes on the vertex of the quadric defined
  by $\partial_{\lambda_\cdot} (g^{(3)}_b)$, and as this is true for general  $\lambda_\cdot$, $f^{(2)}_b$ has to vanish on  $P_b$. Thus the restricted function
$f_{\mid S_b}$ has trivial Hessian, that is,
$f$ is affine on $S_b$. Thus $d(f_{\mid S_b})$ is a constant $1$-form for the affine structure given by the $z_i$'s, which concludes the proof.
\end{proof}

Coll\`{e}ge de France, 3 rue d'Ulm, 75005 Paris FRANCE

claire.voisin@imj-prg.fr
    \end{document}